\definecolor{jr@red}{RGB}{228,26,28}
\definecolor{jr@blue}{RGB}{55,126,184}
\definecolor{jr@green}{RGB}{77,175,74}
\definecolor{jr@purple}{RGB}{152,78,163}
\definecolor{jr@orange}{RGB}{255,127,0}
\definecolor{jr@yellow}{RGB}{255,255,51}
\definecolor{jr@brown}{RGB}{166,86,40}
\definecolor{jr@pink}{RGB}{247,129,191}
\definecolor{jr@gray}{RGB}{153,153,153}
  \pgfplotsset{compat=newest}
\theoremstyle{plain}
\newtheorem{lemma}{Lemma}[section]
\newtheorem{proposition}[lemma]{Proposition}
\theoremstyle{definition}
\theoremstyle{remark}
\newcounter{algorithmicH} 
\let\oldalgorithmic\algorithmic
\renewcommand{\algorithmic}{%
  \stepcounter{algorithmicH} 
  \oldalgorithmic} 
\renewcommand{\theHALG@line}{ALG@line.\thealgorithmicH.\arabic{ALG@line}}
\newcommand{\boldvec}[1]{\ensuremath{\mathbf{#1}}}
  \renewcommand{\vec}[1]{\boldvec{#1}}
  \newcommand{\vec}[1]{\boldvec{#1}}
\newcommand{\pp}[2]{\frac{\partial #1}{\partial #2}}
\newcommand{\bs}[1]{\boldsymbol{#1}}
\newcommand{\editHighlighting}[3]{%
  \if@display%
  \textcolor{red!40!white}{\text{\sout{#1}} \textcolor{#2}{#3}}%
  \else%
  \textcolor{red!40!white}{\sout{#1} \textcolor{#2}{#3}}%
  \fi%
}
\newif\ifshowstatus
\begin{document}

\begin{frontmatter}

\title{Structure-Preserving Transfer of Grad--Shafranov Equilibria to Magnetohydrodynamic Solvers}

  \author[gt]{Rushan Zhang}
  \ead{rzhangbq@gatech.edu}

  \author[lanl]{Golo Wimmer\fnref{lanlThanks}}
  \ead{gwimmer@lanl.gov}

  \author[gt]{Qi Tang\corref{cor1}\fnref{ascrThanks}}
  \ead{qtang@gatech.edu}

  \address[gt]{School of Computational Science and Engineering, Georgia Institute of Technology, Atlanta, GA 30332.}
  \address[lanl]{Theoretical Division, Los Alamos National Laboratory, Los Alamos, NM 87545.}

  \cortext[cor1]{Corresponding authors}
  \fntext[lanlThanks]{Research was partially supported by the Laboratory Directed Research and Development program of Los
    Alamos National Laboratory (LANL), under project number 20240261ER, as well as the U.S. Department of
    Energy Office of Fusion Energy Sciences Base Theory Program, at LANL under contract No.~89233218CNA000001.}
  \fntext[ascrThanks]{Research was partially supported by the U.S.~Department of Energy Advanced Scientific Computing Research program of Mathematical Multifaceted Integrated Capability Center (MMICC).
  }

  \begin{abstract}
    Magnetohydrodynamic (MHD) solvers used to study dynamic plasmas for magnetic confinement fusion typically rely on initial conditions that describe force balance, which are provided by an equilibrium solver based on the Grad--Shafranov (GS) equation. Transferring such equilibria from the GS discretization to the MHD discretization often introduces errors that lead to unwanted perturbations to the equilibria on the level of the MHD discretization. In this work, we identify and analyze sources of such errors in the context of finite element methods, with a focus on the force balance and divergence-free properties of the loaded equilibria. In particular, we reveal three main sources of errors: (1) the improper choice of finite element spaces in the MHD scheme relative to the poloidal flux and toroidal field function spaces in the GS scheme, (2) the misalignment of the meshes from two solvers, and (3) possibly under-resolved strong gradients near the separatrix. With this in mind, we study the impact of different choices of finite element spaces, including those based on compatible finite elements. In addition, we also investigate the impact of mesh misalignment and propose to conduct mesh refinement to resolve the strong gradients near the separatrix. Numerical experiments are conducted to demonstrate equilibria errors arising in the transferred initial conditions. Results show that force balance is best preserved when structure-preserving finite element spaces are used and when the MHD and GS meshes are both aligned and refined. Given that the poloidal flux is often computed in continuous Galerkin spaces, we further demonstrate that projecting the magnetic field into divergence-conforming spaces is optimal for preserving force balance, while projection into curl-conforming spaces, although less optimal for force balance, weakly preserves the divergence-free property. 
  \end{abstract}

  \begin{keyword}
    Compatible Finite Element\sep Grad--Shafranov Equation\sep Magnetohydrodynamics \sep Magnetic Confinement Fusion
  \end{keyword}
\end{frontmatter}


\section{Introduction}
\label{sec:intro}

The magnetohydrodynamic (MHD) equations are widely used for simulating long-time instabilities such as disruptions in magnetic confinement fusion~(MCF) devices~\cite{jardin2010computational, sovinec2004nonlinear, hoelzl2021jorek}. Such instabilities may arise from small perturbations to an MHD equilibrium~\cite{igochine2015active}, and thereby, clean and accurate equilibrium solutions as initial conditions are essential for stability analysis. In axisymmetric devices, equilibrium solutions are typically obtained by solving the Grad--Shafranov (GS) equation, which expresses the magnetic field in axisymmetric $(r,z)$-coordinates in terms of its poloidal and toroidal components, using the poloidal magnetic flux function $\Psi$ and the toroidal magnetic field profile $f(\Psi)$~\cite{jardin2010computational}. Although axisymmetry allows the field to be represented using 2D scalar functions, time-dependent MHD simulations of inherently 3D instabilities still require a full 3D vector field as the initial condition. In particular, in order to express the Lorentz force term, a discrete representation of the initial magnetic field $\mathbf{B}$ and the corresponding current density vector $\mathbf{J}$ is required. Consequently, converting GS solver outputs for $\Psi$ and $f$ into the full magnetic vector field is necessary.

There are various strategies for solving the GS equation and loading GS-solver-based solutions into transient MHD solvers. The standard approach is to store $\Psi$-data on a Cartesian grid along with a list of 1D arrays for different profiles that are needed to reconstruct full fields. This often follows formats used by the widely used EFIT code~\cite{lao1985reconstruction}, in which $\Psi$ is stored in a 2D array and the toroidal field function $f$ and pressure $p$ are documented as arrays depending on normalized $\Psi$. The magnetic field may then be reconstructed on the Cartesian grid using, e.g., finite differences, and can then be interpolated onto the MHD solver grid \cite{bonilla2023fully,jorti2023mimetic,hamilton2025aligning}. This workflow motivated the recent advance of several finite-difference-based GS solvers that use Cartesian grids \cite{freegsgithub, amorisco2024freegsnke}, cut-cells \cite{liu2021parallel} and structured adaptive grids \cite{farmakalides2025cratos}.

While GS solvers are commonly based on finite differences, large-scale MHD solvers for axisymmetric MCF devices are often based on finite elements \cite{sovinec2004nonlinear, hoelzl2021jorek}. When providing equilibria for these MHD solvers, the GS solver should naturally be based on finite elements \cite{heumann2015quasi, peng2020adaptive, serino2024adaptive}. However, a systematic study on how to transfer solutions between the two finite element solvers is currently missing, and, in practice, ad hoc approaches are often applied to address this gap. For instance, \cite{howell2014solving} adopts a spectral element basis and uses a spectral element expansion to compute the field values required by the MHD solver, while the Jorek code \cite{hoelzl2021jorek} re-solves the GS equation on the same mesh as the MHD solver and incorporates $\Psi$ directly into the transient MHD evolution after loading from a free-boundary GS solver. In fact, re-solving has been found to be necessary to achieve a good force balance in the MHD simulation \cite{jepson2016importance}. 

In practice, the choice of discretization in GS solvers and the strategy of transferring to a given MHD grid significantly affects the quality of the initial state for stability analysis. For instance, \cite{jorti2023mimetic} reports a force imbalance in the initial condition and requires additional damping of the force residuals before the MHD evolution. Such errors in the equilibrium solution, as transferred onto the MHD grid, potentially originate from three sources: (1) incompatibilities between the GS and MHD discretizations, (2) interpolation between unaligned computational meshes, and (3) sharp gradients near the separatrix.

In this work, we analyze the above error sources, with particular focus on the first source, which arises from the choice of GS vs MHD discretizations. Our starting point is an MFEM-based free-boundary GS solver \cite{serino2024adaptive}. Based on the GS equilibria attained from this solver, we consider zero-pressure equilibria, and focus on the Lorentz force term, as it is relatively complex compared to the pressure gradient term. Our investigation is carried out within a compatible finite element framework. The compatible finite element method is a mixed finite element method that introduces a sequence of finite element spaces designed to form differential complexes, in which one finite element space is mapped to another by differential operators~\cite{boffi2013mixed,arnold2006finite}. Compatible finite elements have been considered extensively for Faraday's law and more generally MHD as a constrained transport method~\cite{bochev2001matching,hu2017stable,gawlik2022finite,wimmer2024structure}, ensuring an exact zero-divergence property for the magnetic field (in a weak or a strong sense). Within this framework, given the differential operation occurring in the definition of $\mathbf{B}$ as a function of $\Psi$, we identify sets of compatible finite element spaces such that the GS-to-MHD transfer error is minimized.
Based on this identification, together with meshing related considerations, the main contributions of this work are as follows:
\begin{itemize}
    \item We propose that the numerical GS-to-MHD transfer error is minimized when (1) the finite element spaces used for the magnetic field $\mathbf{B}$ are the spaces that are contained in the same compatible finite element chain as the initial finite element spaces where $\Psi$ and $f$ reside, (2) the mesh in the toroidal cross-section of the transient MHD solver is aligned with the 2D mesh of the GS solver and (3) the mesh is sufficiently refined along the separatrix.
    \item We describe a projection path and a mesh refinement strategy that reflects the above considerations for minimizing the transfer error. In particular, one of our novel projection paths also naturally preserves the magnetic field's divergence-free property. This eliminates the need for an initial divergence cleaning step within the MHD solver. A divergence cleaning step may otherwise lead to additional perturbations to the MHD equilibrium.
    \item We implement the proposed projection path in MFEM \cite{mfem}, demonstrating the importance of both meshing \textit{and} function space considerations necessary to preserve the approximate MHD equilibrium in the MHD solver's initial conditions.
\end{itemize}

The remainder of this paper is structured as follows. In Section \ref{sec:governing-eqns}, we derive our quantities of interest related to transferring data from the GS solver to the MHD solver. In Section \ref{sec:discretization}, we review the compatible finite element method and discuss it in the context of MHD equilibria. In Section \ref{sec:numerical_method}, we introduce our new loading procedure based on compatible finite elements, including a discussion on the magnetic field's divergence-free property and the resulting discrete MHD equilibrium Lorentz force terms. In Sections \ref{sec:implementation} and \ref{sec:results}, we describe our code implementation and present numerical results, respectively. Finally, in Section \ref{sec:conclusion}, we end with a conclusion of our work.


\section{Governing equations}
\label{sec:governing-eqns}

\subsection{Grad--Shafranov equation}
\label{sec:gs-eqns}
In MHD, given a tokamak domain $\Omega$, the non-dimensionalized plasma velocity momentum equation can be written as
\begin{equation}
    \rho \left(\pp{\mathbf{u}}{t} + \mathbf{u} \cdot \nabla \mathbf{u}\right) +  \beta \nabla p + \mathbf{B} \times \mathbf{J} = \mathbf{0}, \label{momentum}
\end{equation}
for density $\rho$, flow velocity $\mathbf{u}$, pressure $p$, magnetic field $\mathbf{B}$, non-dimensional internal-to-magnetic energy ratio $\beta \ll 1$, and current density
\begin{equation}
    \mathbf{J} = \nabla \times \mathbf{B}.
\end{equation}
Typically, in tokamaks, the plasma is considered to be in an axisymmetric equilibrium state where the pressure gradient and Lorentz force are in balance, given by
\begin{equation}
    \beta \nabla p + \mathbf{B} \times \mathbf{J} = \mathbf{0}. \label{equilibrium}
\end{equation}

To construct a pressure field $p$ and magnetic field $\mathbf{B}$ that satisfy \eqref{equilibrium}, we switch to cylindrical coordinates $(r, \phi, z)$ and drop any dependence on the $\phi$-coordinate in $p$ and $\mathbf{B}$. Next, we express the magnetic field through the magnetic vector potential $\mathbf{A} = (A_r, A_\phi, A_z)$, such that $\mathbf{B} = \nabla_c \times \mathbf{A}$, for cylindrical coordinate curl
\begin{align}
    \begin{split}
        \nabla_c \times \mathbf{A} & = \left(\frac{1}{r} \pp{A_z}{\phi} - \pp{A_\phi}{z}\right) \mathbf{e}_r + \left(\pp{A_r}{z}  - \pp{A_z}{r} \right) \mathbf{e}_\phi + \frac{1}{r} \left(\pp{(r A_\phi)}{r} - \pp{A_r}{\phi}\right) \mathbf{e}_z,
    \end{split}
\end{align}
where $\mathbf{e}_r$, $\mathbf{e}_\phi$, $\mathbf{e}_z$ denote the unit vectors in cylindrical coordinates. Using axisymmetry and the definition of the curl in axisymmetric coordinates, we then have
\begin{equation}
    \mathbf{B} = \nabla_c \times (A_\phi \mathbf{e}_\phi) + \left(\pp{A_r}{z}-\pp{A_z}{r}\right)\mathbf{e}_\phi. \label{B_from_A}
\end{equation}
Next, we transfer our axisymmetric equations to the 2D $(r, z)$-plane for which we use the natural embedding of axisymmetric 3D fields in $(r,\phi,z)$-space to 2D fields in $(r, z)$-space
\begin{equation}
\sim_\phi \colon A(r, \phi, z) \rightarrow A(r, z),
\end{equation}
and define
\begin{equation}
A_\phi \; \sim_\phi \; \frac{1}{r}\Psi(r, z), \qquad \pp{A_r}{z}-\pp{A_z}{r} \; \sim_\phi \frac{1}{r}f(r, z),\label{Psi_f_from_A}
\end{equation}
for fields $\Psi$ and $f$ defined in the 2D $(r, z)$-plane. In view of our finite-element-based discretizations to be introduced below, we further define the 2D planar gradient and perpendicular operators according to
\begin{equation}
    \nabla = \left(\pp{}{r}, \pp{}{z}\right)^T, \qquad (A_r, A_z)^\perp = (-A_z, A_r), \qquad \nabla^\perp = \left(-\pp{}{z}, \pp{}{r}\right)^T, \label{2D_gradient}
\end{equation}
which we can also use to rewrite the terms in \eqref{B_from_A} as
\begin{equation}
    \nabla_c \times (A_\phi \mathbf{e}_\phi) \; \sim_\phi \; \frac{1}{r} \nabla^\perp \Psi(r, z), \qquad \left(\pp{A_r}{z}-\pp{A_z}{r}\right)\mathbf{e}_\phi \; \sim_\phi \frac{1}{r} f(r, z). \label{B_from_psi_f}
\end{equation}
$\Psi$ is called the \textit{poloidal flux function}, and $\frac{1}{r} \nabla^\perp \Psi(r, z)$ corresponds to the 2D vector component of $\mathbf{B}$ embedded in the \textit{poloidal} $(r, z)$-plane. In contrast, $f$ is called the \textit{toroidal field function}, and the scalar field $f/r$ corresponds to the out-of-$(r, z)$-plane component of $\mathbf{B}$ aligned with the \textit{toroidal} $\phi$-coordinate. Assuming the embedded 2D planar pressure and toroidal field function to be functions of $\Psi$, i.e., $p = p\big(\Psi(r, z)\big)$ and $f = f\big(\Psi(r, z)\big)$, the equilibrium equation \eqref{equilibrium} can then be written purely as an equation in $\Psi(r, z)$, which is called the \textit{Grad--Shafranov (GS)} equation.

The equation is a PDE in $(r, z)$-space, and its domain is an extension of the vertical cross-section (i.e., poloidal plane) of a tokamak. It can be solved using classical PDE discretization techniques such as the finite element method, returning $\Psi(r, z)$ on a 2D computational mesh, as well as $f$ and $p$ as functions of $\Psi$ on discrete points $\{\Psi_i\}_{i=0}^m$ along a 1D interval $[\Psi_{\min}, \Psi_{\max}]$. In practice, $f$ and $p$ can then be expressed directly on the 2D computational mesh as $f\big(P_{1D}(\Psi(r, z))\big)$ and $p\big(P_{1D}(\Psi(r, z))\big)$, where $P_{1D}$ is an interpolator acting on $\{\Psi_i\}_{i=0}^m$.

In addition, we note that in view of the discussion below, the split of $\mathbf{B}$ into a $\Psi$- and an $f$-dependent part as in \eqref{B_from_psi_f} will be useful in the process of loading data from the 2D GS mesh into the 3D MHD mesh. The expression $\nabla_c \times \left(A_\phi \mathbf{e}_\phi\right)$ is a vector that lies purely within the $(r, z)$-plane, while $\left(\pp{A_r}{z}-\pp{A_z}{r}\right)\mathbf{e}_\phi$ points along the $\phi$-direction.

Finally, for the remainder of this work, we focus on the more complex Lorentz force term $\mathbf{B} \times \mathbf{J}$ (relative to the pressure gradient term), and consider zero-beta equilibria
\begin{equation}
\beta = 0 \hspace{5mm} \rightarrow \hspace{5mm} \mathbf{B} \times \mathbf{J} = \mathbf{0}, \label{zero_beta}
\end{equation}
although we note that the discussion to follow can readily be extended to cases with $\beta > 0$.

\subsection{2D component-wise conversion of the magnetic potential to MHD quantities}
For axisymmetric equilibria considered in this work, as described above, vector fields can be decomposed into a 2D vector component within the $(r, z)$ poloidal plane, denoted here by the subscript ``$p$'', and a 2D out-of-plane toroidal scalar component, denoted by the subscript ``$t$'', which is also a function in $(r, z)$-coordinates.  Using these relations, we can express the embedded poloidal and the toroidal components of the magnetic field $\mathbf{B}$ within the 2D $(r, z)$-plane as
\begin{subequations}
    \label{B_from_GS}
    \begin{align}
        \mathbf{B}_p(r, z) & \coloneqq \frac{1}{r} \nabla^\perp \Psi, \label{Bp_from_psi} \\
       B_t(r, z) &\coloneqq \frac{f}{r}. \label{Bt_from_f}
    \end{align}
\end{subequations}
Note that \eqref{B_from_GS} implies that the magnetic field is divergence-free, since in the axisymmetric case the $\phi$-component $B_\phi \; \sim_\phi \; f/r$ of $\mathbf{B}$ is independent of the $\phi$-coordinate, and therefore
\begin{align}
    \begin{split}
        \nabla_c \cdot \mathbf{B} & = \frac{1}{r} \pp{\big(r (B_p)_r\big)}{r} + \pp{(B_p)_z}{z} 
                                     \; \sim_\phi \; \frac{1}{r}\nabla \cdot \left(r \mathbf{B}_p\right)       
                                     = \nabla \cdot \nabla^\perp \Psi                          
                                     = 0,
        \label{div_B_p}
    \end{split}
\end{align}
where $\nabla_c \cdot (\cdot)$ denotes the divergence in cylindrical coordinates, and where we substituted for the definition of \eqref{Bp_from_psi}. We will revisit the divergence of the magnetic field in our discretizations below, and for this purpose define $D_b$ in the 2D $(r, z)$-plane as
\begin{equation}
    D_b = \frac{1}{r}\nabla \cdot \left(r \mathbf{B}_p\right).
    \label{div_B_p_def}
\end{equation}

Next to considering the magnetic field's divergence computation, we will also consider the computation of the current density $\mathbf{J} = \nabla \times \mathbf{B}$, which is a good indicator of the accuracy of our procedure for loading from GS data to MHD data. In particular, if the discrete current density as an MHD function is noisy, then we can expect the same to hold true for the Lorentz force term $\mathbf{B} \times \mathbf{J}$, which appears in the MHD equilibrium. Being an axisymmetric vector field, we again decompose $\mathbf{J}$ into a poloidal vector and toroidal scalar component, which can be easily computed in an analogous fashion to \eqref{B_from_A}. This leads to axisymmetric fields that can be expressed in 2D $(r, z)$-coordinates as
\begin{subequations}
    \label{J_from_B}
    \begin{align}
        \mathbf{J}_p & = \frac{1}{r}\nabla^\perp (rB_t),   \\
        J_t          & = -\nabla^\perp \cdot \mathbf{B}_p.
    \end{align}
\end{subequations}
Note that the current density's components can also be expressed directly as functions of $\Psi$ and $f$ given \eqref{B_from_GS}, which are
\begin{subequations}
    \label{J_direct_nondiscr}
    \begin{align}
        \mathbf{J}_p & = \frac{1}{r}\nabla^\perp f,                         \\
        J_t          & = -\nabla \cdot \left(\frac{1}{r} \nabla \Psi\right).
    \end{align}
\end{subequations}

Finally, when comparing results of mapping from GS to MHD fields, we will consider the zero-beta force balance equation~\eqref{zero_beta} as a validation metric, which is associated with the so-called force-free equilibrium \cite{chandrasekhar1958force}. Our previous work finds that such a force balance equilibrium is difficult to achieve in transient MHD codes \cite{jorti2023mimetic}, which motivates the current work.

Given the component-wise magnetic field and its corresponding current density, the poloidal and toroidal components of the Lorentz force are given in the 2D $(r, z)$-plane by
\begin{subequations}    \label{eq:lorentz-force-from-BJ}
    \begin{align}
        [\mathbf{B}\times\mathbf{J}]_p & = - B_t \mathbf{J}_p^\perp + J_t \mathbf{B}_p^\perp, \\
        [\mathbf{B}\times\mathbf{J}]_t & = \mathbf{B}_p \cdot \mathbf{J}_p^\perp.
    \end{align}
\end{subequations}

Note that the Lorentz force's components can also be expressed directly as functions of $\mathbf{B}_p$ and $B_t$ given~\eqref{J_from_B}, which leads to
\begin{subequations}
    \label{eq:lorentz-force-from-B}
    \begin{align}
        [\mathbf{B}\times\mathbf{J}]_p & = \frac{1}{r} B_t \nabla (rB_t) - (\nabla^\perp\cdot \mathbf{B}_p) \mathbf{B}_p^\perp, \\
        [\mathbf{B}\times\mathbf{J}]_t & = -\frac{1}{r}\left(\mathbf{B}_p \cdot \nabla(rB_t)\right).
    \end{align}
\end{subequations}
A more detailed derivation for the component-wise magnetic field as a function of the magnetic vector potential, as well as the Lorentz force reconstruction is given in~\ref{apx:diff_ops_2D}.


\section{Finite element discretization}
\label{sec:discretization}
When mapping GS fields to MHD fields, we consider a suitable discrete representation of the magnetic field components \eqref{B_from_GS}. Ideally, this discrete representation should satisfy the discrete divergence-free condition \eqref{div_B_p} and ensure that the discrete toroidal and poloidal current density components, and consequently the Lorentz force, exhibit low levels of noise. From the GS solver, we assume 2D fields given in discrete finite element spaces
\begin{equation}
    \Psi \in \mathbb{V}_\Psi (\mathcal{T}_{GS})_k, \hspace{1cm} f \in \mathbb{V}_f (\mathcal{T}_{GS})_l,
\end{equation}
where $\mathcal{T}_{GS}$ is the mesh used in the GS solver, and $k$, $l$ denote the spaces' polynomial degrees. Analogously, we assume
\begin{equation}
    \mathbf{B}_p \in \mathbb{V}_{\mathbf{B}_p} (\mathcal{T}_{2D})_m, \hspace{1cm} B_t \in \mathbb{V}_{B_t} (\mathcal{T}_{2D})_n,
\end{equation}
and
\begin{equation}
    \mathbf{J}_p \in \mathbb{V}_{\mathbf{J}_p} (\mathcal{T}_{2D})_m, \hspace{1cm} J_t \in \mathbb{V}_{J_t} (\mathcal{T}_{2D})_n,
\end{equation}
where $\mathcal{T}_{2D}$ denotes the 2D poloidal restriction of the mesh used in the MHD solver, and $m$, $n$ denote the spaces' polynomial degrees. Note that here we assume axisymmetric meshes for MHD, given the axisymmetric initial magnetic field configuration from the GS solver.

\subsection{Compatible finite element spaces}
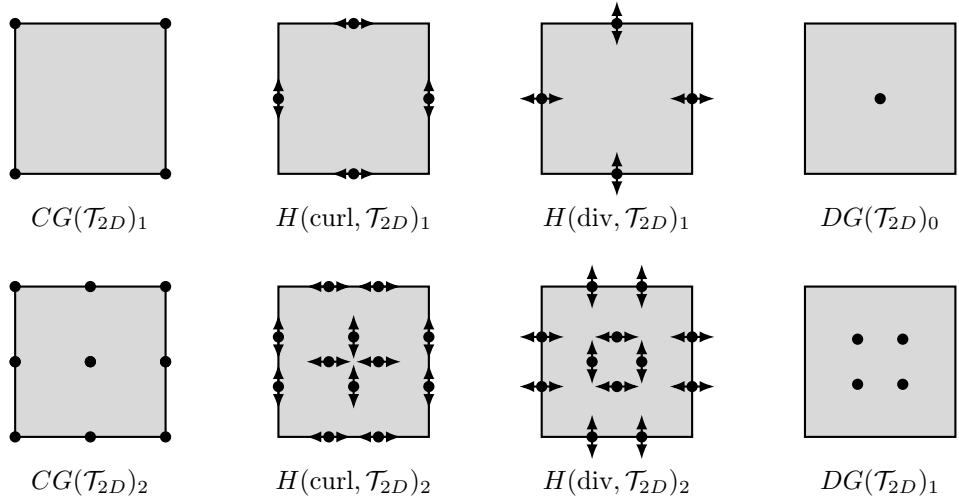
\begin{figure}[h]
    \centering
    \begin{tikzpicture}
        \tikzset{
        dot/.style={circle, fill=black, inner sep=1.5pt},
        vec/.style={-{Latex[length=2mm]}, thick},
        box/.style={draw=black, thick}
        }

        \begin{scope}
            \fill[gray!30] (0,0) rectangle (2,2);
            \draw[box] (0,0) rectangle (2,2);
            \foreach \x in {0,2} {
                    \foreach \y in {0,2} {
                            \ifnum\x=1\relax
                                \ifnum\y=1\relax
                                    \node[dot] at (\x,\y) {};
                                \fi
                            \else
                                \ifnum\y=1\relax \node[dot] at (\x,\y) {}; \fi
                            \fi
                        }
                }
            \foreach \x in {0,2} {
                    \foreach \y in {0,2} {
                            \node[dot] at (\x,\y) {};
                        }
                }
            \node[below=0.3cm] at (1,0) {$CG(\mathcal{T}_{2D})_1$};
        \end{scope}

        \begin{scope}[xshift=3.5cm]
            \fill[gray!30] (0,0) rectangle (2,2);
            \draw[box] (0,0) rectangle (2,2);

            \foreach \x/\y in {1/0, 2/1, 1/2, 0/1} {
                    \node[dot] at (\x,\y) {};
                }

            \foreach \x/\y/\dx/\dy in {
                    1/0/0.3/0, 1/0/-0.3/0,   
                    2/1/0/0.3, 2/1/0/-0.3,   
                    1/2/0.3/0, 1/2/-0.3/0,   
                    0/1/0/0.3, 0/1/0/-0.3    
                } {
                    \draw[vec] (\x,\y) -- ++(\dx,\dy);
                }
            \node[below=0.3cm] at (1,0) {$H(\text{curl}, \mathcal{T}_{2D})_1$};
        \end{scope}

        \begin{scope}[xshift=7cm]
            \fill[gray!30] (0,0) rectangle (2,2);
            \draw[box] (0,0) rectangle (2,2);

            \foreach \x/\y in {1/0, 2/1, 1/2, 0/1} {
                    \node[dot] at (\x,\y) {};
                }

            \foreach \x/\y/\dx/\dy in {
                    1/0/0/0.3, 1/0/0/-0.3,   
                    2/1/-0.3/0, 2/1/0.3/0,   
                    1/2/0/-0.3, 1/2/0/0.3,   
                    0/1/0.3/0, 0/1/-0.3/0    
                } {
                    \draw[vec] (\x,\y) -- ++(\dx,\dy);
                }
            \node[below=0.3cm] at (1,0) {$H(\text{div}, \mathcal{T}_{2D})_1$};
        \end{scope}

        \begin{scope}[xshift=10.5cm]
            \fill[gray!30] (0,0) rectangle (2,2);
            \draw[box] (0,0) rectangle (2,2);
            \node[dot] at (1,1) {};
            \node[below=0.3cm] at (1,0) {$DG(\mathcal{T}_{2D})_0$};
        \end{scope}

        \begin{scope}[yshift=-3.5cm]
            \fill[gray!30] (0,0) rectangle (2,2);
            \draw[box] (0,0) rectangle (2,2);
            \foreach \x in {0,1,2} {
                    \foreach \y in {0,1,2} {
                            \ifnum\x=1\relax
                                \ifnum\y=1\relax
                                    \node[dot] at (\x,\y) {};
                                \fi
                            \else
                                \ifnum\y=1\relax \node[dot] at (\x,\y) {}; \fi
                            \fi
                        }
                }
            \foreach \x in {0,1,2} {
                    \foreach \y in {0,1,2} {
                            \node[dot] at (\x,\y) {};
                        }
                }
            \node[below=0.3cm] at (1,0) {$CG(\mathcal{T}_{2D})_2$};
        \end{scope}

        \begin{scope}[xshift=3.5cm, yshift=-3.5cm]
            \fill[gray!30] (0,0) rectangle (2,2);
            \draw[box] (0,0) rectangle (2,2);

            \foreach \x/\y in {
                    0.67/0, 1.33/0,    
                    2/0.67, 2/1.33,    
                    1.33/2, 0.67/2,    
                    0/1.33, 0/0.67     
                } {
                    \node[dot] at (\x,\y) {};
                }

            \foreach \x/\y/\dx/\dy in {
                    0.67/0/0.3/0, 0.67/0/-0.3/0,   
                    1.33/0/0.3/0, 1.33/0/-0.3/0,   
                    2/0.67/0/0.3, 2/0.67/0/-0.3,   
                    2/1.33/0/0.3, 2/1.33/0/-0.3,   
                    1.33/2/0.3/0, 1.33/2/-0.3/0,   
                    0.67/2/0.3/0, 0.67/2/-0.3/0,   
                    0/1.33/0/0.3, 0/1.33/0/-0.3,   
                    0/0.67/0/0.3, 0/0.67/0/-0.3    
                } {
                    \draw[vec] (\x,\y) -- ++(\dx,\dy);
                }

            \node[dot] at (0.67,1) {};
            \node[dot] at (1.33,1) {};
            \node[dot] at (1,0.67) {};
            \node[dot] at (1,1.33) {};
            \draw[vec] (0.67,1) -- ++(0.3,0);   
            \draw[vec] (0.67,1) -- ++(-0.3,0);  
            \draw[vec] (1.33,1) -- ++(0.3,0);   
            \draw[vec] (1.33,1) -- ++(-0.3,0);  
            \draw[vec] (1,0.67) -- ++(0,0.3);   
            \draw[vec] (1,0.67) -- ++(0,-0.3);  
            \draw[vec] (1,1.33) -- ++(0,0.3);   
            \draw[vec] (1,1.33) -- ++(0,-0.3);  
            \node[below=0.3cm] at (1,0) {$H(\text{curl}, \mathcal{T}_{2D})_2$};
        \end{scope}

        \begin{scope}[xshift=7cm, yshift=-3.5cm]
            \fill[gray!30] (0,0) rectangle (2,2);
            \draw[box] (0,0) rectangle (2,2);

            \foreach \x/\y in {
                    0.67/0, 1.33/0,    
                    2/0.67, 2/1.33,    
                    1.33/2, 0.67/2,    
                    0/1.33, 0/0.67     
                } {
                    \node[dot] at (\x,\y) {};
                }

            \foreach \x/\y/\dx/\dy in {
                    0.67/0/0/0.3, 0.67/0/0/-0.3,   
                    1.33/0/0/0.3, 1.33/0/0/-0.3,   
                    2/0.67/-0.3/0, 2/0.67/0.3/0,   
                    2/1.33/-0.3/0, 2/1.33/0.3/0,   
                    1.33/2/0/-0.3, 1.33/2/0/0.3,   
                    0.67/2/0/-0.3, 0.67/2/0/0.3,   
                    0/1.33/0.3/0, 0/1.33/-0.3/0,   
                    0/0.67/0.3/0, 0/0.67/-0.3/0    
                } {
                    \draw[vec] (\x,\y) -- ++(\dx,\dy);
                }
            \node[dot] at (0.67,1) {};
            \node[dot] at (1.33,1) {};
            \node[dot] at (1,0.67) {};
            \node[dot] at (1,1.33) {};
            \draw[vec] (0.67,1) -- ++(0,0.3);   
            \draw[vec] (0.67,1) -- ++(0,-0.3);  
            \draw[vec] (1.33,1) -- ++(0,0.3);   
            \draw[vec] (1.33,1) -- ++(0,-0.3);  
            \draw[vec] (1,0.67) -- ++(0.3,0);   
            \draw[vec] (1,0.67) -- ++(-0.3,0);  
            \draw[vec] (1,1.33) -- ++(0.3,0);   
            \draw[vec] (1,1.33) -- ++(-0.3,0);  
            \node[below=0.3cm] at (1,0) {$H(\text{div}, \mathcal{T}_{2D})_2$};
        \end{scope}

        \begin{scope}[xshift=10.5cm, yshift=-3.5cm]
            \fill[gray!30] (0,0) rectangle (2,2);
            \draw[box] (0,0) rectangle (2,2);

            \foreach \x in {0.7,1.3} {
                    \foreach \y in {0.7,1.3} {
                            \node[dot] at (\x,\y) {};
                        }
                }
            \node[below=0.3cm] at (1,0) {$DG(\mathcal{T}_{2D})_1$};
        \end{scope}

    \end{tikzpicture}
    \caption{First-order and second-order continuous Galerkin spaces on a 2D quadrilateral element, along with their corresponding curl- and divergence-conforming spaces, and discontinuous Galerkin spaces.}
    \label{fig:2d_fem_spaces}
\end{figure}

Here, we introduce compatible finite element spaces to be used for the magnetic field and current density fields. We first discuss 2D finite element spaces. Using 2D quadrilateral elements as an example, Figure~\ref{fig:2d_fem_spaces} shows the lowest two orders of 2D compatible finite element spaces. The top-left shows the first-order continuous Galerkin (CG) space $CG(\mathcal{T}_{2D})_1$, where the basis functions are defined on vertices. The gradient of a scalar defined in this CG space results in vectors naturally lying on cell edges, which corresponds to the curl-conforming space $H(\text{curl}, \mathcal{T}_{2D})_1$, while applying the gradient with a $90^\circ$ rotation -- that is $\nabla^\perp$ defined in \eqref{2D_gradient} -- results in vectors lying perpendicular to the edges, which corresponds to the divergence-conforming space $H(\text{div}, \mathcal{T}_{2D})_1$. Finally, applying the divergence to vector fields defined in the $H(\text{div}, \mathcal{T}_{2D})_1$ space, or alternatively the divergence with a $90^\circ$ rotation to vector fields defined in the $H(\text{curl}, \mathcal{T}_{2D})_1$ space, results in scalars lying in the zeroth-order discontinuous Galerkin (DG) space $DG(\mathcal{T}_{2D})_0$. The second row shows the second-lowest order of 2D compatible finite element spaces, with similar relationships with respect to the differential operators. The curl- and divergence-conforming spaces considered in this work are given by the Raviart-Thomas spaces $RT^e_m$, $RT^f_m$ for triangles and $RTc^e_m$, $RTc^f_m$ for quadrilaterals (as depicted in Figure \ref{fig:2d_fem_spaces}).

Altogether, the 2D compatible finite element spaces can be seen in the context of a discrete de-Rham complex. That is, one space is mapped to another through differential operators, analogously to their non-discrete counterparts. The curl- and divergence-conforming 2D de-Rham complexes are given by
\begin{displaymath}
    \xymatrix{
        H^1(\Omega) \ar[r]^{\nabla} \ar[d]^{\pi^0} & H(\text{{\normalfont curl}};\Omega) \ar[r]^{\nabla^\perp \cdot} \ar[d]^{\pi^1} & L^2(\Omega) \ar[d]^{\pi^2} \\
        \mathbb{V}_0(\Omega) \ar[r]^{\nabla}  &\mathbb{V}_1(\Omega) \ar[r]^{\nabla^\perp \cdot} & \mathbb{V}_2(\Omega)} \hspace{1cm}
    \xymatrix{
        H^1(\Omega) \ar[r]^{\nabla^\perp} \ar[d]^{\pi^0} & H(\text{{\normalfont div}};\Omega)\ar[d]^{\pi^1} \ar[r]^{\nabla \cdot} & L^2(\Omega) \ar[d]^{\pi^2} \\
        \mathbb{V}_0(\Omega) \ar[r]^{\nabla^\perp}  & \mathbb{V}_1(\Omega) \ar[r]^{\nabla \cdot}  & \mathbb{V}_2(\Omega)},
\end{displaymath}
respectively, for a 2D domain $\Omega$, and where $\pi^i$ denote projections from the Sobolev spaces to their discrete counterparts. $H^1(\Omega)$ corresponds to the $CG(\mathcal{T}_{2D})_m$ space, $H(\text{curl};\Omega)$ to the $H(\text{curl}, \mathcal{T}_{2D})_m$ space, $H(\text{div};\Omega)$ to the $H(\text{div}, \mathcal{T}_{2D})_m$ space and $L^2(\Omega)$ to the $DG(\mathcal{T}_{2D})_{m-1}$ space.

\newcommand{\drawbottom}{%
    \filldraw[fill=gray!40, opacity=0.4, draw=none] (0,0.25,0) -- (1,0.25,0) -- (1,0.25,1) -- (0,0.25,1) -- cycle; 
    \draw[thick] (0,0.25,0) -- (1,0.25,0); 
    \draw[thick] (0,0.25,0) -- (0,0.25,1); 
    \draw[thick] (1,0.25,0) -- (1,0.25,1); 
    \draw[thick] (0,0.25,1) -- (1,0.25,1); 
}

\newcommand{\drawcube}{%
    \filldraw[fill=gray!40, opacity=0.4, draw=none] (0,0,0) -- (1,0,0) -- (1,1,0) -- (0,1,0) -- cycle; 
    \filldraw[fill=gray!40, opacity=0.4, draw=none] (0,0,0) -- (0,1,0) -- (0,1,1) -- (0,0,1) -- cycle; 
    \filldraw[fill=gray!40, opacity=0.4, draw=none] (0,0,0) -- (1,0,0) -- (1,0,1) -- (0,0,1) -- cycle; 
    \filldraw[fill=gray!60, opacity=0.6, draw=none] (1,0,0) -- (1,1,0) -- (1,1,1) -- (1,0,1) -- cycle; 
    \filldraw[fill=gray!10, opacity=0.1, draw=none] (0,1,0) -- (1,1,0) -- (1,1,1) -- (0,1,1) -- cycle; 
    \filldraw[fill=gray!40, opacity=0.4, draw=none] (0,0,1) -- (1,0,1) -- (1,1,1) -- (0,1,1) -- cycle; 
    \draw[thick, gray, dashed] (0,0,0) -- (1,0,0); 
    \draw[thick] (1,0,0) -- (1,1,0); 
    \draw[thick] (1,1,0) -- (0,1,0); 
    \draw[thick, gray, dashed] (0,1,0) -- (0,0,0); 
    \draw[thick] (0,0,1) -- (1,0,1); 
    \draw[thick] (1,0,1) -- (1,1,1); 
    \draw[thick] (1,1,1) -- (0,1,1); 
    \draw[thick] (0,1,1) -- (0,0,1); 
    \draw[thick, gray, dashed] (0,0,0) -- (0,0,1); 
    \draw[thick] (1,0,0) -- (1,0,1); 
    \draw[thick] (1,1,0) -- (1,1,1); 
    \draw[thick] (0,1,0) -- (0,1,1); 
}

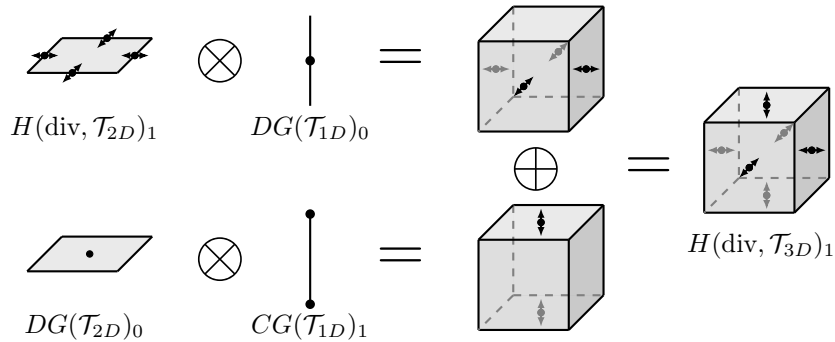
\begin{figure}[h]
    \centering
    \begin{tikzpicture}[scale=1.2, every node/.style={font=\large}]
        \tikzset{
        vec/.style={-{Latex[length=2mm]}, thick}
        }
        \begin{scope}[yshift=1.2cm]
            \node at (0.25,-.75) {\normalsize $H(\text{div}, \mathcal{T}_{2D})_1$};
            \drawbottom
            \foreach \a/\b/\c/\dx/\dy/\dz in {
            0.5/0.25/0/0/0/0.3, 0.5/0.25/0/0/0/-0.3,
            0.5/0.25/1/0/0/0.3, 0.5/0.25/1/0/0/-0.3,
            0/0.25/0.5/0.16/0/0, 0/0.25/0.5/-0.16/0/0,
            1/0.25/0.5/0.16/0/0, 1/0.25/0.5/-0.16/0/0
            } {
            \filldraw[fill=black] (\a,\b,\c) circle (1pt);
            \draw[-{Latex[length=1.3mm]}, thick] (\a,\b,\c) -- ++(\dx,\dy,\dz);
            }

            \node at (1.75,0) {\Huge$\otimes$};

            \node at (2.75,-.75) {\normalsize $DG(\mathcal{T}_{1D})_0$};
            \draw[thick] (2.75,-0.50) -- (2.75,0.50);
            \fill (2.75,0) circle (1.5pt);

            \node at (3.75,0) {\Huge$=$};
        \end{scope}
        \begin{scope}[xshift=5cm, yshift=0.8cm]
            \drawcube
            \foreach \a/\b/\c/\dx/\dy/\dz in {
            0.5/0.5/0/0/0/0.3, 0.5/0.5/0/0/0/-0.3,
            0.5/0.5/1/0/0/0.3, 0.5/0.5/1/0/0/-0.3,
            0/0.5/0.5/0.16/0/0, 0/0.5/0.5/-0.16/0/0,
            1/0.5/0.5/0.16/0/0, 1/0.5/0.5/-0.16/0/0
            } {
            \pgfmathsetmacro{\makegray}{0}
            \pgfmathparse{\a==0 ? 1 : (\b==0 ? 1 : (\c==0 ? 1 : 0))}
            \let\makegray\pgfmathresult

            \ifnum\makegray=1
                \filldraw[fill=gray, gray] (\a,\b,\c) circle (1pt);
                \draw[-{Latex[length=1.3mm]}, thick, gray] (\a,\b,\c) -- ++(\dx,\dy,\dz);
            \else
                \filldraw[fill=black] (\a,\b,\c) circle (1pt);
                \draw[-{Latex[length=1.3mm]}, thick] (\a,\b,\c) -- ++(\dx,\dy,\dz);
            \fi
            }
        \end{scope}
        \begin{scope}
            \node at (5.25,0) {\Huge$\oplus$};
        \end{scope}
        \begin{scope}[xshift=5cm, yshift=-1.4cm]
            \drawcube
            \foreach \a/\b/\c/\dx/\dy/\dz in {
            0.5/0/0.5/0/0.16/0, 0.5/0/0.5/0/-0.16/0,
            0.5/1/0.5/0/0.16/0, 0.5/1/0.5/0/-0.16/0
            } {
            \pgfmathsetmacro{\makegray}{0}
            \pgfmathparse{\a==0 ? 1 : (\b==0 ? 1 : (\c==0 ? 1 : 0))}
            \let\makegray\pgfmathresult

            \ifnum\makegray=1
                \filldraw[fill=gray, gray] (\a,\b,\c) circle (1pt);
                \draw[-{Latex[length=1.3mm]}, thick, gray] (\a,\b,\c) -- ++(\dx,\dy,\dz);
            \else
                \filldraw[fill=black] (\a,\b,\c) circle (1pt);
                \draw[-{Latex[length=1.3mm]}, thick] (\a,\b,\c) -- ++(\dx,\dy,\dz);
            \fi
            }
        \end{scope}
        \begin{scope}[yshift=-1cm]
            \node at (0.25,-.75) {\normalsize $DG(\mathcal{T}_{2D})_0$};
            \drawbottom
            \filldraw[fill=black] (0.5,0.25,0.5) circle (1pt);

            \node at (1.75,0) {\Huge$\otimes$};

            \node at (2.75,-.75) {\normalsize $CG(\mathcal{T}_{1D})_1$};
            \draw[thick] (2.75,-0.50) -- (2.75,0.50);
            \fill (2.75,-0.50) circle (1.5pt);
            \fill (2.75,0.50) circle (1.5pt);

            \node at (3.75,0) {\Huge$=$};
        \end{scope}

        \begin{scope}[xshift=6.5cm]
            \node at (0,0) {\Huge$=$};
        \end{scope}

        \begin{scope}[xshift=7.5cm, yshift=-0.1cm]
            \drawcube
            \foreach \a/\b/\c/\dx/\dy/\dz in {
            0.5/0.5/0/0/0/0.3, 0.5/0.5/0/0/0/-0.3,
            0.5/0.5/1/0/0/0.3, 0.5/0.5/1/0/0/-0.3,
            0.5/0/0.5/0/0.16/0, 0.5/0/0.5/0/-0.16/0,
            0.5/1/0.5/0/0.16/0, 0.5/1/0.5/0/-0.16/0,
            0/0.5/0.5/0.16/0/0, 0/0.5/0.5/-0.16/0/0,
            1/0.5/0.5/0.16/0/0, 1/0.5/0.5/-0.16/0/0
            } {
            \pgfmathsetmacro{\makegray}{0}
            \pgfmathparse{\a==0 ? 1 : (\b==0 ? 1 : (\c==0 ? 1 : 0))}
            \let\makegray\pgfmathresult

            \ifnum\makegray=1
                \filldraw[fill=gray, gray] (\a,\b,\c) circle (1pt);
                \draw[-{Latex[length=1.3mm]}, thick, gray] (\a,\b,\c) -- ++(\dx,\dy,\dz);
            \else
                \filldraw[fill=black] (\a,\b,\c) circle (1pt);
                \draw[-{Latex[length=1.3mm]}, thick] (\a,\b,\c) -- ++(\dx,\dy,\dz);
            \fi
            }
            \node at (0.25,-.75) {\normalsize $H(\text{div}, \mathcal{T}_{3D})_1$};

        \end{scope}

    \end{tikzpicture}
    \caption{Illustration of forming 3D finite element spaces from tensor products of 1D finite element spaces and 2D finite element spaces, using the example of a 3D divergence-conforming space based on a 2D divergence-conforming space and a 2D DG space.}
    \label{fig:2d_tensor_product_to_3d}
\end{figure}
3D compatible finite element spaces can be obtained from tensor products between 1D compatible finite element spaces and 2D compatible finite element spaces~\cite{mcrae2016automated}. For example, Figure~\ref{fig:2d_tensor_product_to_3d} shows an illustration of $H(\text{div}, \mathcal{T}_{3D})_1$ formed as the sum of $H(\text{div}, \mathcal{T}_{2D})_1 \otimes DG(\mathcal{T}_{1D})_0$ and $DG(\mathcal{T}_{2D})_0 \otimes CG(\mathcal{T}_{1D})_1$, where~$\otimes$ denotes the tensor product of function spaces. We can similarly form other compatible finite element spaces in 3D, which are illustrated in Figure~\ref{fig:3d_fem_spaces}, where the lowest order compatible finite element spaces on a 3D hexahedral element are shown as an example.

\begin{figure}[h]
    \centering
    \begin{tikzpicture}[scale=2]
        \begin{scope}
            \drawcube
            \foreach \x in {0,1} \foreach \y in {0,1} \foreach \z in {0,1} {
                    \filldraw[fill=black] (\x,\y,\z) circle (1pt);
                }
            \filldraw[fill=gray, gray] (0,0,0) circle (1pt);
            \node[below=0.3cm] at (0.25,-0.375,0) {$CG(\mathcal{T}_{3D})_1$};
        \end{scope}
        \begin{scope}[xshift=1.75cm]
            \drawcube
            \foreach \a/\b/\c/\dx/\dy/\dz in {
            0.5/0/0/0.15/0/0, 0.5/0/0/-0.15/0/0,
            0.5/1/0/0.15/0/0, 0.5/1/0/-0.15/0/0,
            0.5/0/1/0.15/0/0, 0.5/0/1/-0.15/0/0,
            0.5/1/1/0.15/0/0, 0.5/1/1/-0.15/0/0,
            0/0.5/0/0/0.15/0, 0/0.5/0/0/-0.15/0,
            1/0.5/0/0/0.15/0, 1/0.5/0/0/-0.15/0,
            0/0.5/1/0/0.15/0, 0/0.5/1/0/-0.15/0,
            1/0.5/1/0/0.15/0, 1/0.5/1/0/-0.15/0,
            0/0/0.5/0/0/0.275, 0/0/0.5/0/0/-0.275,
            1/0/0.5/0/0/0.275, 1/0/0.5/0/0/-0.275,
            0/1/0.5/0/0/0.275, 0/1/0.5/0/0/-0.275,
            1/1/0.5/0/0/0.275, 1/1/0.5/0/0/-0.275
            } {
            \pgfmathsetmacro{\makegray}{0}
            \pgfmathparse{(\a==0 && \b==0) ? 1 : (\a==0 && \c==0) ? 1 : (\b==0 && \c==0) ? 1 : 0}
            \let\makegray\pgfmathresult

            \ifnum\makegray=1
                \filldraw[fill=gray, gray] (\a,\b,\c) circle (1pt);
                \draw[-{Latex[length=2mm]}, thick, gray] (\a,\b,\c) -- ++(\dx,\dy,\dz);
            \else
                \filldraw[fill=black] (\a,\b,\c) circle (1pt);
                \draw[-{Latex[length=2mm]}, thick] (\a,\b,\c) -- ++(\dx,\dy,\dz);
            \fi
            }
            \foreach \a/\b/\c/\dx/\dy/\dz in {
            0.5/0/0/0.15/0/0, 0.5/0/0/-0.15/0/0,
            0.5/1/0/0.15/0/0, 0.5/1/0/-0.15/0/0,
            0.5/0/1/0.15/0/0, 0.5/0/1/-0.15/0/0,
            0.5/1/1/0.15/0/0, 0.5/1/1/-0.15/0/0,
            0/0.5/0/0/0.15/0, 0/0.5/0/0/-0.15/0,
            1/0.5/0/0/0.15/0, 1/0.5/0/0/-0.15/0,
            0/0.5/1/0/0.15/0, 0/0.5/1/0/-0.15/0,
            1/0.5/1/0/0.15/0, 1/0.5/1/0/-0.15/0,
            0/0/0.5/0/0/0.275, 0/0/0.5/0/0/-0.275,
            1/0/0.5/0/0/0.275, 1/0/0.5/0/0/-0.275,
            0/1/0.5/0/0/0.275, 0/1/0.5/0/0/-0.275,
            1/1/0.5/0/0/0.275, 1/1/0.5/0/0/-0.275
            } {
            \pgfmathsetmacro{\makegray}{0}
            \pgfmathparse{(\a==0 && \b==0) ? 1 : (\a==0 && \c==0) ? 1 : (\b==0 && \c==0) ? 1 : 0}
            \let\makegray\pgfmathresult

            \ifnum\makegray=0
                \filldraw[fill=black] (\a,\b,\c) circle (1pt);
                \draw[-{Latex[length=2mm]}, thick] (\a,\b,\c) -- ++(\dx,\dy,\dz);
            \fi
            }
            \node[below=0.3cm] at (0.25,-0.375,0) {$H(\text{curl}, \mathcal{T}_{3D})_1$};
        \end{scope}
        \begin{scope}[xshift=3.5cm]
            \drawcube
            \foreach \a/\b/\c/\dx/\dy/\dz in {
            0.5/0.5/0/0/0/0.275, 0.5/0.5/0/0/0/-0.275,
            0.5/0.5/1/0/0/0.275, 0.5/0.5/1/0/0/-0.275,
            0.5/0/0.5/0/0.15/0, 0.5/0/0.5/0/-0.15/0,
            0.5/1/0.5/0/0.15/0, 0.5/1/0.5/0/-0.15/0,
            0/0.5/0.5/0.15/0/0, 0/0.5/0.5/-0.15/0/0,
            1/0.5/0.5/0.15/0/0, 1/0.5/0.5/-0.15/0/0
            } {
            \pgfmathsetmacro{\makegray}{0}
            \pgfmathparse{\a==0 ? 1 : (\b==0 ? 1 : (\c==0 ? 1 : 0))}
            \let\makegray\pgfmathresult

            \ifnum\makegray=1
                \filldraw[fill=gray, gray] (\a,\b,\c) circle (1pt);
                \draw[-{Latex[length=2mm]}, thick, gray] (\a,\b,\c) -- ++(\dx,\dy,\dz);
            \else
                \filldraw[fill=black] (\a,\b,\c) circle (1pt);
                \draw[-{Latex[length=2mm]}, thick] (\a,\b,\c) -- ++(\dx,\dy,\dz);
            \fi
            }
            \node[below=0.3cm] at (0.25,-0.375,0) {$H(\text{div}, \mathcal{T}_{3D})_1$};
        \end{scope}
        \begin{scope}[xshift=5.25cm]
            \drawcube
            \filldraw[fill=gray, gray] (0.5,0.5,0.5) circle (1pt);
            \node[below=0.3cm] at (0.25,-0.375,0) {$DG(\mathcal{T}_{3D})_0$};
        \end{scope}
    \end{tikzpicture}
    \caption{First-order continuous Galerkin space on a 3D hexahedral element, along with its corresponding curl- and divergence-conforming spaces, and discontinuous Galerkin space.}
    \label{fig:3d_fem_spaces}
\end{figure}
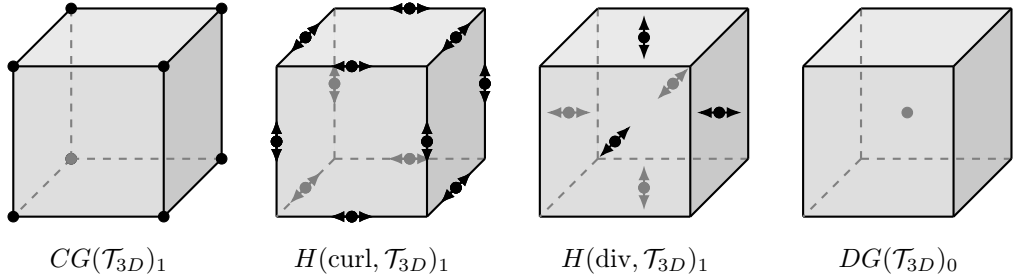

One common choice of magnetic field space used to satisfy the divergence-free condition exactly (in a strong or weak form) is given by divergence- and curl-conforming finite element spaces. In the case of tokamak meshes whose edges are either contained exactly in an $(r,z)$-plane or aligned exactly along the $\phi$-coordinate, these spaces can naturally be associated with a vector space $\mathbb{V}_{\mathbf{B}_p}$ and a scalar space $\mathbb{V}_{B_t}$ in the 2D poloidal plane. This holds true since the degrees of freedom of these spaces are associated with edges and faces, and therefore, if edges and faces are aligned exactly with the $(r,z)$-plane or $\phi$-direction, each degree of freedom can be associated exactly with either the poloidal plane or the (scalar) toroidal direction.

In light of the formation of 3D spaces from tensor products of 1D and 2D spaces, we can decompose a 3D vector finite element space into a 2D vector finite element space and a 2D scalar finite element space within the $(r,z)$-plane, and consider $\mathbf{B}_p$ in the former and $B_t$ in the latter space. For divergence-conforming magnetic fields, these two spaces are given by
\begin{equation}
    \mathbf{B}(r,\phi,z) \in H(\text{div}, \mathcal{T}_{3D})_m \;\;\; \rightarrow \;\;\; \mathbb{V}_{\mathbf{B}_p} = H(\text{div}, \mathcal{T}_{2D})_m, \;\;\; \mathbb{V}_{B_t} = DG(\mathcal{T}_{2D})_{m-1},
    \label{eq:B_spaces_div}
\end{equation}
where $H(\text{div}, \mathcal{T})_m$ denotes a divergence-conforming space of polynomial order $m$ defined on a mesh $\mathcal{T}$, and $DG(\mathcal{T}_{2D})_{m-1}$ denotes the DG space appearing in the same de-Rham complex as the divergence-conforming space (see Figure \ref{fig:2d_tensor_product_to_3d}). On the other hand, for curl-conforming magnetic fields, the spaces are given by
\begin{equation}
    \mathbf{B}(r,\phi,z) \in H(\text{curl}, \mathcal{T}_{3D})_m \;\;\; \rightarrow \;\;\; \mathbb{V}_{\mathbf{B}_p} = H(\text{curl}, \mathcal{T}_{2D})_m, \;\;\; \mathbb{V}_{B_t} = CG(\mathcal{T}_{2D})_{m},
    \label{eq:B_spaces_curl}
\end{equation}
where $H(\text{curl}, \mathcal{T})_m$ is defined analogously to the divergence-conforming version, and $CG(\mathcal{T}_{2D})_{m}$ denotes a CG space of the same de-Rham complex. Similarly for the divergence-conforming current density fields, we have
\begin{equation}
    \mathbf{J}(r,\phi,z) \in H(\text{div}, \mathcal{T}_{3D})_m \;\;\; \rightarrow \;\;\; \mathbb{V}_{\mathbf{J}_p} = H(\text{div}, \mathcal{T}_{2D})_m, \;\;\; \mathbb{V}_{J_t} = DG(\mathcal{T}_{2D})_{m-1},
    \label{eq:J_spaces_div}
\end{equation}
and for the curl-conforming current density fields, we have
\begin{equation}
    \mathbf{J}(r,\phi,z) \in H(\text{curl}, \mathcal{T}_{3D})_m \;\;\; \rightarrow \;\;\; \mathbb{V}_{\mathbf{J}_p} = H(\text{curl}, \mathcal{T}_{2D})_m, \;\;\; \mathbb{V}_{J_t} = CG(\mathcal{T}_{2D})_{m}.
    \label{eq:J_spaces_curl}
\end{equation}

\subsection{Vector-CG spaces}
Besides the compatible finite element spaces, another choice of finite element spaces for vector fields in MHD is given by vector-CG spaces (e.g.,~\cite{bonilla2023fully}). While such spaces are simpler to construct than divergence- and curl-conforming finite element spaces, the resulting MHD discretizations generally require divergence-cleaning methods since the resulting discrete magnetic field evolution in time will generally not be naturally divergence-free. These spaces are defined as $CG(\mathcal{T}_{2D})_m^d$, where the degree $m$ is the polynomial degree of the finite element space and $d$ is the dimension of the vector field. Under vector-CG spaces, for the special case of MHD discretizations in $(r, \phi, z)$-coordinates\footnote{In practice, finite element discretizations for non-reduced MHD systems are considered in $(x, y, z)$-coordinates, since coordinate transformations are handled automatically within the finite element reference-cell transformation. However, e.g., finite difference discretizations are typically posed in $(r, \phi, z)$-coordinates for easier meshing, and in the case of collocated such discretizations, the GS-to-MHD data transfer in $(r, \phi, z)$-coordinates can equivalently be represented using vector-CG spaces.}, as well as for poloidal planes that lie on the $x$- or $y$-axis for MHD discretizations in $(x, y, z)$-coordinates, the magnetic field $\mathbf{B}$ can again be decomposed as
\begin{equation}
    \mathbf{B}(r,\phi,z) \in CG(\mathcal{T}_{2D})_m^3 \;\;\; \rightarrow \;\;\; \mathbb{V}_{\mathbf{B}_p} = CG(\mathcal{T}_{2D})_m^2, \;\;\; \mathbb{V}_{B_t} = CG(\mathcal{T}_{2D})_m,
    \label{eq:B_spaces_vectorCG}
\end{equation}
while the current density $\mathbf{J}$ is decomposed as
\begin{equation}
    \mathbf{J}(r,\phi,z) \in CG(\mathcal{T}_{2D})_m^3 \;\;\; \rightarrow \;\;\; \mathbb{V}_{\mathbf{J}_p} = CG(\mathcal{T}_{2D})_m^2, \;\;\; \mathbb{V}_{J_t} = CG(\mathcal{T}_{2D})_m.
    \label{eq:J_spaces_vectorCG}
\end{equation}

\section{Numerical discretization}
\label{sec:numerical_method}
\subsection{Discretization of the magnetic field}
In light of the definitions of $\mathbf{B}_p$ and $B_t$ given in \eqref{B_from_GS}, we next describe the magnetic fields' discrete derivations given the choice of finite element spaces for the scalar fields $\Psi$ and $f$.

If $\Psi$ is a CG field, then according to the 2D de-Rham complex in the $(r,z)$-plane, $\mathbf{B}_p$ should be divergence-conforming, and computed from \eqref{Bp_from_psi} as
\begin{align}
    \langle \mathbf{w}, r \mathbf{B}_p - \nabla^\perp \Psi \rangle = 0, \qquad \forall \mathbf{w} \in H(\text{div}, \mathcal{T}_{2D})_m, \label{discr_B_p_Hdiv}
\end{align}
since $\nabla^\perp \Psi \in H(\text{div}, \mathcal{T}_{2D})_m$. Here, $\langle \cdot, \cdot \rangle$ denotes the $L^2$-inner product. Note that the derivation of $\mathbf{B}_p$ only holds weakly due to the additional factor of $r$ arising from the Jacobian in cylindrical coordinates. In contrast, if $\Psi$ is a DG field, then in an analogous argument, $\mathbf{B}_p$ should be curl-conforming, and computed weakly as
\begin{align}
    \langle \mathbf{\Sigma}, r \mathbf{B}_p \rangle +  \langle \nabla^\perp \cdot \mathbf{\Sigma}, \Psi \rangle - \int_{\partial \mathcal{T}_{2D}}  \Psi (\mathbf{\Sigma} \cdot \mathbf{n}^\perp) \, dS = 0, \qquad \forall \mathbf{\Sigma} \in H(\text{curl}, \mathcal{T}_{2D})_m,  \label{discr_B_p_Hcurl}
\end{align}
where we used integration by parts (for further details, see \ref{apx:IBP}). Here, we note that finite element-based GS solvers typically do not return $\Psi$ as a DG field. However, in this case we may still compute $\mathbf{B}_p$ in the $H(\text{curl}, \mathcal{T}_{2D})_m$ space using \eqref{discr_B_p_Hcurl}. The pairing of $\Psi$ with the DG field $\nabla^\perp \cdot \mathbf{\Sigma}$ (noting the 2D planar de-Rham complex) then implicitly corresponds to projecting $\Psi$ into the DG space.

For the toroidal component $B_t$, if $f$ is a CG field, then $B_t$ also naturally corresponds to a CG field, computed as
\begin{align}
    \langle \eta, r B_t - f \rangle = 0, \qquad \forall \eta \in CG(\mathcal{T}_{2D})_{m}, \label{discr_B_t_CG}
\end{align}
while if $f$ is a DG field, then similarly $B_t$ should be a DG field, computed as
\begin{align}
    \langle \phi, r B_t - f \rangle = 0, \qquad \forall \phi \in DG(\mathcal{T}_{2D})_{m-1}. \label{discr_B_t_DG}
\end{align}
Again, we note that $f$ is generally not returned as a DG field in GS solvers, and we may still consider \eqref{discr_B_t_DG} to compute $B_t$ for CG fields $f$.

Besides compatible finite element spaces, we also consider vector-CG spaces for comparison. If $\Psi$ is a CG field, then $\mathbf{B}_p$ should be in a 2D vector-CG space, and computed as
\begin{align}
    \langle \mathbf{v}, r \mathbf{B}_p - \nabla^\perp \Psi \rangle = 0, \qquad \forall \mathbf{v} \in CG(\mathcal{T}_{2D})_m^2. \label{discr_B_p_CG}
\end{align}
The toroidal component $B_t$ should lie in a 2D scalar CG space, which is already given in \eqref{discr_B_t_CG}.

Note that at this point, we have not specified the degree $m$ yet. In practice, the degree may, for example, be set according to the position of the $\Psi$-finite element space within the de-Rham complex. For instance, if $\Psi \in CG(\mathcal{T}_{2D})_k$, then we would consider a divergence-conforming poloidal magnetic field $\mathbf{B}_p$ set in $H(\text{div}, \mathcal{T}_{2D})_k$ (assuming Raviart-Thomas spaces). If $\Psi \in DG(\mathcal{T}_{2D})_k$, then we would instead consider a curl-conforming field $\mathbf{B}_p$ set in $H(\text{curl}, \mathcal{T}_{2D})_{k+1}$. Note, however, that the best choice of degree may change given the additional factor of $r$ in the above equations.

\subsection{Discretization of the divergence of the magnetic field}

Further, given the above definitions of computing $\mathbf{B}_p$ in cylindrical coordinates, the curl-conforming version can be shown to be weakly divergence-free.

\begin{proposition} \label{prop_zero_div}
    Let $\mathbf{B}_p \in H(\textup{curl}, \mathcal{T}_{2D})_m$ be defined by \eqref{discr_B_p_Hcurl}. Further, assume the boundary of our 2D planar domain to be piecewise linear. Then $\mathbf{B}_p$ admits as discrete divergence $D_b \in CG(\mathcal{T}_{2D})_{m}$ such that $D_b \equiv 0$.
\end{proposition}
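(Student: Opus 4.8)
The plan is to show that the $r$-weighted discrete divergence functional vanishes against every test function in $CG(\mathcal{T}_{2D})_m$, so that $D_b$, being the Riesz representative of this functional with respect to the (symmetric positive-definite, since $r>0$ on the poloidal domain) $r$-weighted $CG$ mass form, is forced to be identically zero. The discrete analogue of $D_b=\frac1r\nabla\cdot(r\mathbf{B}_p)$ from \eqref{div_B_p_def} is the weak cylindrical divergence obtained by moving $\nabla$ onto the continuous test function: find $D_b\in CG(\mathcal{T}_{2D})_m$ such that $\langle\eta, rD_b\rangle=-\langle\nabla\eta, r\mathbf{B}_p\rangle+\int_{\partial\mathcal{T}_{2D}}\eta\,(r\mathbf{B}_p\cdot\mathbf{n})\,dS$ for all $\eta\in CG(\mathcal{T}_{2D})_m$. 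Everything then reduces to evaluating the right-hand side using the equation \eqref{discr_B_p_Hcurl} that defines $\mathbf{B}_p$.

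The first step exploits the discrete de-Rham complex: for any $\eta\in CG(\mathcal{T}_{2D})_m$ the discrete gradient satisfies $\nabla\eta\in H(\text{curl},\mathcal{T}_{2D})_m$, so $\mathbf{\Sigma}=\nabla\eta$ is an admissible test function in \eqref{discr_B_p_Hcurl}. Substituting it and invoking the pointwise identity $\nabla^\perp\!\cdot(\nabla\eta)\equiv0$ (the scalar curl of a gradient vanishes exactly on each affine element) annihilates the middle term of \eqref{discr_B_p_Hcurl}, leaving the exact relation $\langle\nabla\eta, r\mathbf{B}_p\rangle=\int_{\partial\mathcal{T}_{2D}}\Psi\,(\nabla\eta\cdot\mathbf{n}^\perp)\,dS$, where $\nabla\eta\cdot\mathbf{n}^\perp$ is the tangential derivative of $\eta$ along the boundary (recall $\mathbf{n}^\perp$ is the tangent by \eqref{2D_gradient}). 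Thus the interior term of the discrete divergence is itself a pure boundary integral.

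The second step handles the two boundary contributions. Since \eqref{discr_B_p_Hcurl} encodes, after integration by parts, that $r\mathbf{B}_p$ represents $\nabla^\perp\Psi$ (cf.\ \eqref{Bp_from_psi}), the consistent normal flux on $\partial\mathcal{T}_{2D}$ is $\nabla^\perp\Psi\cdot\mathbf{n}=-\,\nabla\Psi\cdot\mathbf{n}^\perp$, the negative tangential derivative of $\Psi$. Inserting both boundary integrals into the definition of $D_b$ gives $\langle\eta,rD_b\rangle=-\int_{\partial\mathcal{T}_{2D}}\big(\Psi\,(\nabla\eta\cdot\mathbf{n}^\perp)+\eta\,(\nabla\Psi\cdot\mathbf{n}^\perp)\big)\,dS=-\int_{\partial\mathcal{T}_{2D}}\partial_{\mathbf{n}^\perp}(\Psi\eta)\,dS$, the integral of a tangential total derivative of the single-valued function $\Psi\eta$ around the closed curve $\partial\mathcal{T}_{2D}$, which vanishes. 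Hence $\langle\eta,rD_b\rangle=0$ for all $\eta$, and invertibility of the weighted mass form yields $D_b\equiv0$.

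The main obstacle is precisely this boundary term, and it is where the piecewise-linear boundary hypothesis enters. On a straight boundary edge the outward normal $\mathbf{n}$ (hence $\mathbf{n}^\perp$) is constant, the traces of $\Psi$ and $\eta$ are genuine polynomials, and the collapse of the two boundary integrands into $\partial_{\mathbf{n}^\perp}(\Psi\eta)$ via the product rule and the identity $\nabla^\perp\Psi\cdot\mathbf{n}=-\nabla\Psi\cdot\mathbf{n}^\perp$ holds edgewise with \emph{no} curvature remainder; summing the exact fundamental-theorem-of-calculus contributions over the closed polygon telescopes to zero. Affine (straight-edged) elements are likewise what make the complex identities of the first step ($\nabla\eta\in H(\text{curl})_m$ and $\nabla^\perp\!\cdot\nabla\eta\equiv0$ in physical coordinates) hold exactly, so no approximation enters. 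One must also ensure the chosen discrete definition of $D_b$ uses this consistent flux: with any other normal-flux convention the boundary terms need not cancel, which is the delicate modeling point underlying the statement that $\mathbf{B}_p$ merely \emph{admits} a vanishing discrete divergence.
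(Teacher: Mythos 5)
Your proof is correct and follows essentially the same route as the paper's: you test \eqref{discr_B_p_Hcurl} with $\mathbf{\Sigma}=\nabla\eta$, use the complex identity $\nabla^\perp\cdot\nabla\eta\equiv 0$, and cancel the remaining boundary integrals by tangential integration by parts around the closed, piecewise-linear boundary, concluding via the invertible $r$-weighted mass form. Your explicit identification of the consistent flux $r\mathbf{B}_p\cdot\mathbf{n}=\nabla^\perp\Psi\cdot\mathbf{n}=-\nabla\Psi\cdot\mathbf{n}^\perp$ and of where the piecewise-linearity enters is a slightly cleaner (and sign-consistent) rendering of the same argument.
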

\begin{proof}
    We define a discrete weak divergence $D_b$ as
    \begin{align}
        \langle \eta, rD_b \rangle = -\langle r\nabla \eta, \mathbf{B}_p \rangle + \int_{\partial \mathcal{T}_{2D}} r \eta g_1 \, dS, \qquad \forall \eta \in CG(\mathcal{T}_{2D})_{m}, \label{div_B_weak}
    \end{align}
for
\begin{equation}
g_1 \coloneqq -\tfrac{1}{r} \mathbf{n} \cdot \nabla^\perp \Psi.
\end{equation}
Since $\eta$ is a CG test function, we have that $\nabla \eta$ is a curl-conforming function, and we can substitute for \eqref{discr_B_p_Hcurl} with $\mathbf{\Sigma}$ set to $\nabla \eta$. This then leads to
    \begin{align}
        \langle \eta, rD_b \rangle =  - \langle \nabla^\perp \cdot \nabla \eta, \Psi \rangle + \int_{\partial \mathcal{T}_{2D}}  \Psi (\nabla \eta \cdot \mathbf{n}^\perp) \, dS + \int_{\partial \mathcal{T}_{2D}} r \eta g_1 \, dS, \qquad \forall \eta \in CG(\mathcal{T}_{2D})_{m}.
    \end{align}
    We then use the discrete identity $\nabla^\perp \cdot \nabla \eta = 0$, such that the first term on the right-hand side vanishes. Further, since we assume a piecewise linear boundary, it can be shown that $\nabla \cdot \mathbf{n}^\perp = 0$ along $\partial \mathcal{T}_{2D}$, and upon applying integration by parts for the first term on the right-hand side (along $\partial \mathcal{T}_{2D}$), we find that the last two terms cancel.
\end{proof}
Note that using the definition of $\mathbf{B}_p$, the term $g_1$ corresponds to $-\mathbf{n} \cdot \mathbf{B}_p$. This is as expected, as can be seen from taking the inner product of the divergence's strong form \eqref{div_B_p_def} with a test function $\eta$ and integrating the divergence of $\mathbf{B}_p$ by parts. In particular, this also implies that our weak definition \eqref{div_B_weak} is consistent with the divergence's strong form.

While the curl-conforming discrete version of $\mathbf{B}_p$ is (weakly) divergence-free, this property no longer holds true for the divergence-conforming version. This is because in strong form \eqref{Bp_from_psi}, we can directly apply the axisymmetric cylindrical coordinate divergence operator $\nabla_c \cdot (\cdot) = \tfrac{1}{r} \nabla \cdot \big(r(\cdot)\big)$, while in weak form \eqref{discr_B_p_Hdiv} -- which is required due to the factor of $r$ -- such a direct application is no longer possible.
In practice, a divergence cleaning operation may therefore be necessary for divergence-conforming $\mathbf{B}_p$.

To verify how the different discretizations preserve the divergence-free property, we will compute the divergence of the magnetic field in both divergence- and curl-conforming finite element spaces and in vector-CG spaces. If $\mathbf{B}_p$ is divergence-conforming, the divergence $D_b$ in DG is discretely computed as
\begin{align}
    \langle r\phi, D_b - \frac{1}{r} \nabla\cdot(r \mathbf{B}_p) \rangle = 0, \qquad \forall \phi \in DG(\mathcal{T}_{2D})_{m-1},
\end{align}
while if $\mathbf{B}_p$ is curl-conforming, the divergence $D_b$ in CG is discretely computed as
\begin{align}
    \langle \eta, rD_b \rangle = -\langle r\nabla \eta, \mathbf{B}_p \rangle + \int_{\partial \mathcal{T}_{2D}} r \eta g_1 \, dS, \qquad \forall \eta \in CG(\mathcal{T}_{2D})_{m}. \label{weak_div}
\end{align}
In the vector-CG case, we can compute the divergence $D_b$ in CG as
\begin{align}
    \langle r\eta, D_b - \frac{1}{r} \nabla\cdot(r \mathbf{B}_p) \rangle = 0, \qquad \forall \eta \in CG(\mathcal{T}_{2D})_{m}.
\end{align}

\subsection{Discretization of the current density field}
Given the discrete magnetic field, the choice of divergence- or curl-conforming finite element space or vector-CG space also determines the choice of space for the current density. If $\mathbf{B}$ is divergence-conforming, then $\mathbf{J}$ is naturally curl-conforming and vice versa. If $\mathbf{B}$ is in a vector-CG space, then for simplicity we also compute $\mathbf{J}$ in a vector-CG space. In the following, we rely on our computed formulas for the current density, such as \eqref{J_from_B}.

For the poloidal component of the current density $\mathbf{J}_p$, if $B_t$ is in a CG space, then $\mathbf{J}_p$ should be divergence-conforming and computed as
\begin{align}
    \langle \mathbf{w}, r \mathbf{J}_p - \nabla^\perp (rB_t) \rangle = 0, \qquad \forall \mathbf{w} \in H(\text{div}, \mathcal{T}_{2D})_m. \label{J_p_from_B_t_CG}
\end{align}
Alternatively, if $B_t$ is in a DG space, then $\mathbf{J}_p$ should be curl-conforming and computed weakly as
\begin{align}
    \langle \mathbf{\Sigma}, r \mathbf{J}_p \rangle + \langle \nabla^\perp \cdot \mathbf{\Sigma}, r B_t \rangle - \int_{\partial \mathcal{T}_{2D}} r B_t (\mathbf{\Sigma} \cdot \mathbf{n}^\perp) \, dS = 0, \qquad \forall \mathbf{\Sigma} \in H(\text{curl}, \mathcal{T}_{2D})_m. \label{discr_J_p_Hcurl}
\end{align}

For the toroidal component $J_t$, the choice of space similarly depends on the poloidal magnetic field $\mathbf{B}_p$. If $\mathbf{B}_p$ is curl-conforming, then $J_t$ should be in a DG space and computed as
\begin{align}
    \langle r \phi, J_t + \nabla \cdot \mathbf{B}_p^\perp \rangle = 0, \qquad \forall \phi \in DG(\mathcal{T}_{2D})_{m-1}. \label{J_t_from_B_p_Hcurl}
\end{align}
On the other hand, if $\mathbf{B}_p$ is divergence-conforming, then $J_t$ should be in a CG space and computed weakly as
\begin{align}
    \langle r \eta, J_t \rangle + \langle \nabla^\perp (r\eta), \mathbf{B}_p \rangle - \int_{\partial \mathcal{T}_{2D}} r \eta (\mathbf{B}_p \cdot \mathbf{n}^\perp) \, dS  = 0, \qquad \forall \eta \in CG(\mathcal{T}_{2D})_{m}. \label{J_t_from_B_p_Hdiv}
\end{align}

In the vector-CG case, given $B_t$ in a CG space, we can compute $\mathbf{J}_p$ in 2D vector-CG space as
\begin{align}
    \langle \mathbf{w}, r \mathbf{J}_p - \nabla^\perp (rB_t) \rangle = 0, \qquad \forall \mathbf{w} \in CG(\mathcal{T}_{2D})_m^2, \label{discr_J_p_CG}
\end{align}
and given $\mathbf{B}_p$ in 2D vector-CG space, we can compute $J_t$ in a CG space as
\begin{align}
    \langle r \eta, J_t + \nabla \cdot \mathbf{B}_p^\perp \rangle = 0, \qquad \forall \eta \in CG(\mathcal{T}_{2D})_{m}. \label{discr_J_t_CG}
\end{align}

Next to this, we note that in principle, we can also discretize the direct current density equations \eqref{J_direct_nondiscr}. For instance, for CG fields $f$ and $\Psi$, we would have divergence-conforming $\mathbf{J}_p$ and $J_t$ in a CG space defined by
\begingroup
\begin{subequations} \label{J_direct}
    \begin{alignat}{3}
         & \langle \mathbf{w}, r \mathbf{J}_p - \nabla^\perp f \rangle = 0,                                                                                                     \qquad    && \forall \mathbf{w} \in H(\text{div}, \mathcal{T}_{2D})_m, \label{J_p_direct} \\
         & \langle \eta, r J_t \rangle - \left\langle \nabla (r \eta), \tfrac{1}{r} \nabla \Psi \right\rangle + \int_{\partial \mathcal{T}_{2D}} \eta ( \mathbf{n} \cdot \nabla \Psi) \, dS = 0, \qquad && \forall \eta \in CG(\mathcal{T}_{2D})_m. \label{J_t_direct}
    \end{alignat}
\end{subequations}
\endgroup

\subsection{Discretization of the Lorentz force}
When comparing the results, we will also consider the force balance equation~\eqref{equilibrium} as the validation metric. The components of the Lorentz force terms $[\mathbf{B}\times\mathbf{J}]_p$ and $[\mathbf{B}\times\mathbf{J}]_t$ lie in the given MHD discretization's choice of velocity space, which is typically (but not always) set equal to the magnetic field space. Here, we follow this approach and set the Lorentz force components to be in the same space as the components of the magnetic field, $\mathbf{B}_p$ and $B_t$, respectively. In other words, if $\mathbf{B}_p$ is divergence-conforming, then we also consider $[\mathbf{B}\times\mathbf{J}]_p$ to be divergence-conforming,
\begin{align}
    \langle r\mathbf{w}, [\mathbf{B}\times\mathbf{J}]_p + B_t \mathbf{J}_p^\perp - J_t \mathbf{B}_p^\perp \rangle = 0, \qquad \forall \mathbf{w} \in H(\text{div}, \mathcal{T}_{2D})_m.
    \label{discr_BxJ_p_Hdiv}
\end{align}
If $\mathbf{B}_p$ is curl-conforming, $[\mathbf{B}\times\mathbf{J}]_p$ can be computed in a similar way except that $[\mathbf{B}\times\mathbf{J}]_p$ should lie in $H(\text{curl}, \mathcal{T}_{2D})_m$.

If $B_t$ is in a CG space, then $[\mathbf{B}\times\mathbf{J}]_t$ should be in a CG space, and computed as
\begin{align}
    \left\langle r\eta, [\mathbf{B}\times\mathbf{J}]_t  - \mathbf{B}_p \cdot \mathbf{J}_p^\perp \right\rangle= 0, \qquad \forall \eta \in CG(\mathcal{T}_{2D})_m.
    \label{discr_BxJ_t_CG}
\end{align}
If $B_t$ is in DG, $[\mathbf{B}\times\mathbf{J}]_t$ can be computed in a similar way except that $[\mathbf{B}\times\mathbf{J}]_t$ should lie in $DG(\mathcal{T}_{2D})_{m-1}$.

Under the vector-CG spaces, we generally will not use $\mathbf{J}$ as an auxiliary variable. Instead, we will compute the Lorentz force directly from the magnetic field $\mathbf{B}$ by \eqref{eq:lorentz-force-from-B}. The discretization of the Lorentz force, in this case, is given by
\begin{subequations}\label{eq:discretization_lorentz_force_vectorCG}
    \begin{alignat}{3}
         & \left\langle r\mathbf{v}, [\mathbf{B}\times\mathbf{J}]_p - \frac{1}{r} B_t \nabla (rB_t) + (\nabla^\perp\cdot \mathbf{B}_p) \mathbf{B}_p^\perp \right\rangle= 0,\qquad && \forall \mathbf{v} \in CG(\mathcal{T}_{2D})_m^2, \\
         & \left\langle r\eta, [\mathbf{B}\times\mathbf{J}]_t +\frac{1}{r}\left(\mathbf{B}_p \cdot \nabla(rB_t)\right) \right\rangle = 0, \qquad
         && \forall \eta \in CG(\mathcal{T}_{2D})_m.
    \end{alignat}
\end{subequations}
\subsection{Cylindrical coordinate projections} 
In the above discussion, we found that the existing factors of $r$ and $1/r$ interfere with the discrete de-Rham complex in $(r,z)$-coordinates, for example, rendering the divergence-conforming magnetic field no longer strictly divergence-free. They may also affect the accuracy of the MHD equilibrium; this can be seen for instance in the computation of $J_t$ either directly from $\Psi$ as in \eqref{J_t_direct}, or indirectly via $\mathbf{B}_p$ by first computing \eqref{discr_B_p_Hdiv}, followed by \eqref{J_t_from_B_p_Hdiv}. These two routes are equivalent only in the absence of the factors of $r$ and $1/r$. Further, all of the above equations may be divided by a factor of $1/r$ before formulating the weak variational form, which may be done for example, to cancel the Jacobian determinant factor of $r$ arising in cylindrical coordinates. In practice, for example, the two computations for divergence-conforming $\mathbf{B}_p$ given by
\begingroup
\begin{subequations} \label{B_p_by_r}
    \begin{alignat}{2}
         & \langle \mathbf{w}, r \mathbf{B}_p - \nabla^\perp \Psi \rangle = 0,            \qquad & \forall \mathbf{w} \in H(\text{div}, \mathcal{T}_{2D})_m, \\
         & \langle \mathbf{w}, \mathbf{B}_p - \tfrac{1}{r} \nabla^\perp \Psi \rangle = 0, \qquad & \forall \mathbf{w} \in H(\text{div}, \mathcal{T}_{2D})_m,
    \end{alignat}
\end{subequations}
\endgroup
may lead to slightly different results depending on the choice of degree $m$. Similarly, the equations for curl-conforming $\mathbf{B}_p$ \eqref{discr_B_p_Hcurl} may also be given by
\begin{align}
    \langle \mathbf{\Sigma}, \mathbf{B}_p \rangle +  \left\langle \nabla^\perp \cdot \left(\frac{1}{r}\mathbf{\Sigma}\right), \Psi \right\rangle - \int_{\partial \mathcal{T}_{2D}} \frac{\Psi}{r} (\mathbf{\Sigma} \cdot \mathbf{n}^\perp) \,dS = 0, \qquad \forall \mathbf{\Sigma} \in H(\text{curl}, \mathcal{T}_{2D})_m.
    \label{discr_B_p_Hcurl_by_r}
\end{align}
This likewise holds true for the current density equations, like \eqref{discr_J_p_Hcurl}, which can alternatively be written as
\begin{align}
    \langle \mathbf{\Sigma}, \mathbf{J}_p \rangle + \left\langle \nabla^\perp \cdot \left(\frac{1}{r}\mathbf{\Sigma}\right), rB_t \right\rangle - \int_{\partial \mathcal{T}_{2D}} B_t (\mathbf{\Sigma} \cdot \mathbf{n}^\perp) \,dS = 0, \qquad \forall \mathbf{\Sigma} \in H(\text{curl}, \mathcal{T}_{2D})_m,
\end{align}
and \eqref{J_direct}, which can alternatively be written as
\begingroup
\begin{subequations} \label{J_direct_by_r}
    \begin{alignat}{3}
         & \langle \mathbf{w}, \mathbf{J}_p - \tfrac{1}{r} \nabla^\perp f \rangle = 0,                                                                                                     \qquad && \forall \mathbf{w} \in H(\text{div}, \mathcal{T}_{2D})_m, \label{J_p_direct_by_r} \\
         & \langle \eta, J_t \rangle -  \left\langle \nabla \eta, \tfrac{1}{r} \nabla \Psi \right\rangle + \int_{\partial \mathcal{T}_{2D}} \tfrac{1}{r} \eta ( \mathbf{n} \cdot \nabla \Psi) \, dS = 0, \qquad&& \forall \eta \in CG(\mathcal{T}_{2D})_m. \label{J_t_direct_by_r}
    \end{alignat}
\end{subequations}
\endgroup

However, in practice, the impact of the above choices is very minor. We have conducted numerical experiments confirming that the choice of using $r$ or $1/r$ factors does not produce a noticeable difference in the presented results.


\section{Implementation}
\label{sec:implementation}
Our implementation for projecting the GS fields onto the MHD fields is built on the C++ library MFEM~\cite{mfem-web}. The GS fields are obtained from a solver presented in~\cite{serino2024adaptive}, which is implemented in MFEM, with their values represented by MFEM grid functions. As mentioned earlier, we consider solutions where the GS solver enforces the condition $\nabla p = 0$. The GS solver provides $\Psi(r,z)$ as a first-order polynomial continuous Galerkin function, and defines $f=f(\Psi)$ as a linear function of $\Psi(r, z)$, which is then interpolated at the degree-of-freedom locations of $\Psi$. In other words, for this work, we assume
\begin{equation}
\Psi \in CG(\mathcal{T}_{GS})_{1}, \qquad f \in CG(\mathcal{T}_{GS})_{1}.
\end{equation}
Often, practical GS solutions, such as those from~\cite{liu2021parallel, amorisco2024freegsnke}, are stored and distributed as EFIT data with $\Psi$-values represented on a regular grid. For our purposes, such data can also be represented as a lowest-order CG field on a finite element mesh corresponding to the regular grid, and our discussion below holds equally true. Finally, we note that extensions to the GS solver from~\cite{serino2024adaptive} to support higher polynomial order spaces and discontinuous Galerkin fields are currently under way, thereby allowing for greater flexibility with respect to our choices of function spaces discussed in Section \ref{sec:discretization}.

In addition to considerations related to the GS solver used in this work, we also briefly discuss our meshing choices and implementation details related to MFEM.

\subsection{Mesh}
For the second and third sources of error mentioned in the introduction -- that is, mesh-related factors -- we examine the relation between the GS and MHD meshes $\mathcal{T}_{GS}$ and $\mathcal{T}_{2D}$, respectively. Specifically, we consider three meshes for $\mathcal{T}_{2D}$, which are based on $\mathcal{T}_{GS}$\footnote{Strictly speaking, the GS solver computes on a larger semi-circular cross-section that contains the tokamak reactor region. This domain includes regions that are not part of the  MHD solver. In the numerical results considered here, we instead look into a rectangular mesh sub-section near the plasma region considered in the MHD solver. The mesh is identical to the original GS solver mesh within the plasma region, while the coils outside the plasma region are omitted.}: (i) the original GS mesh $\mathcal{T}_{GS}$ as described in \cite{serino2024adaptive}, which is given by an unstructured mesh with triangular elements; (ii) the GS mesh modified to contain a perturbation for each mesh node $(r_i, z_i)$ given by
\begin{align}
    \begin{split}
        r_i' &= r_i + \alpha \, \sin(r_i), \\
        z_i' &= z_i + \alpha \, \sin(z_i),
        \label{mesh_perturbation}
    \end{split}
\end{align}
with $\alpha = 0.05$, and (iii) the original GS mesh further refined along the separatrix. We note that in practice the misalignment between GS and MHD meshes is more significant. The second mesh is chosen to show even a small perturbation in the mesh can lead to large errors in the MHD fields. 

The first and third meshes are shown in Figure~\ref{fig:2d_meshes}, while the perturbed mesh is omitted as the perturbation is too small to be easily visually distinguishable from the original mesh. We first use the first mesh for both $\mathcal{T}_{GS}$ and $\mathcal{T}_{2D}$, demonstrating the results under exact alignment. We then use the first mesh as $\mathcal{T}_{GS}$ and the perturbed second mesh as $\mathcal{T}_{2D}$ to analyze the effect of misalignment. Finally, to assess the impact of mesh refinement, we use the refined third mesh for both $\mathcal{T}_{GS}$ and $\mathcal{T}_{2D}$, to examine the refinement's effectiveness in dealing with steep gradients near the separatrix. 

\begin{figure}[!htbp]
    \centering
    \begin{subfigure}[t]{0.286\textwidth}
        \centering
        \includegraphics[height=0.26\textheight]{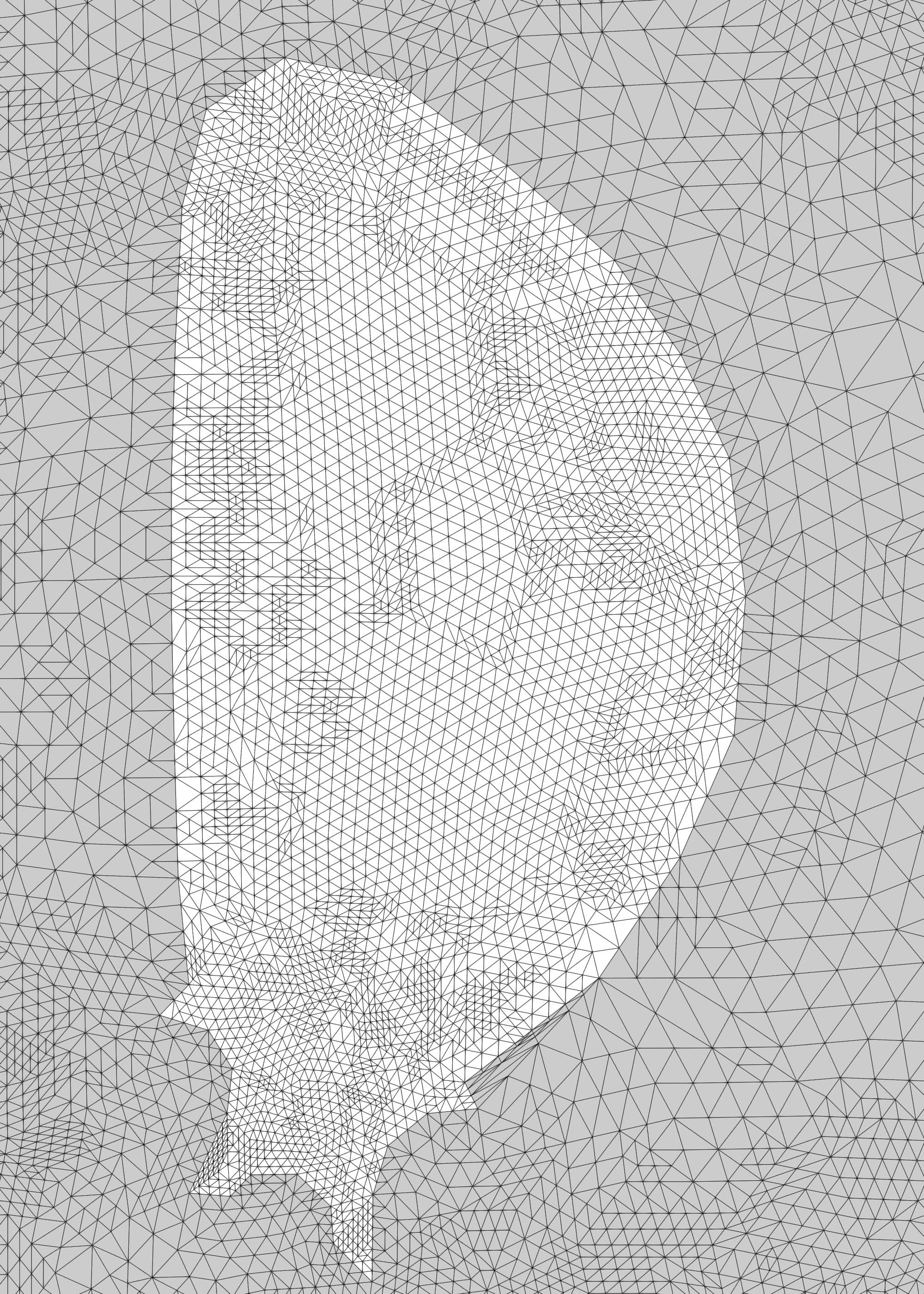}
        \caption{Original GS solver mesh}
    \end{subfigure}%
    \begin{subfigure}[t]{0.286\textwidth}
        \centering
        \includegraphics[height=0.26\textheight]{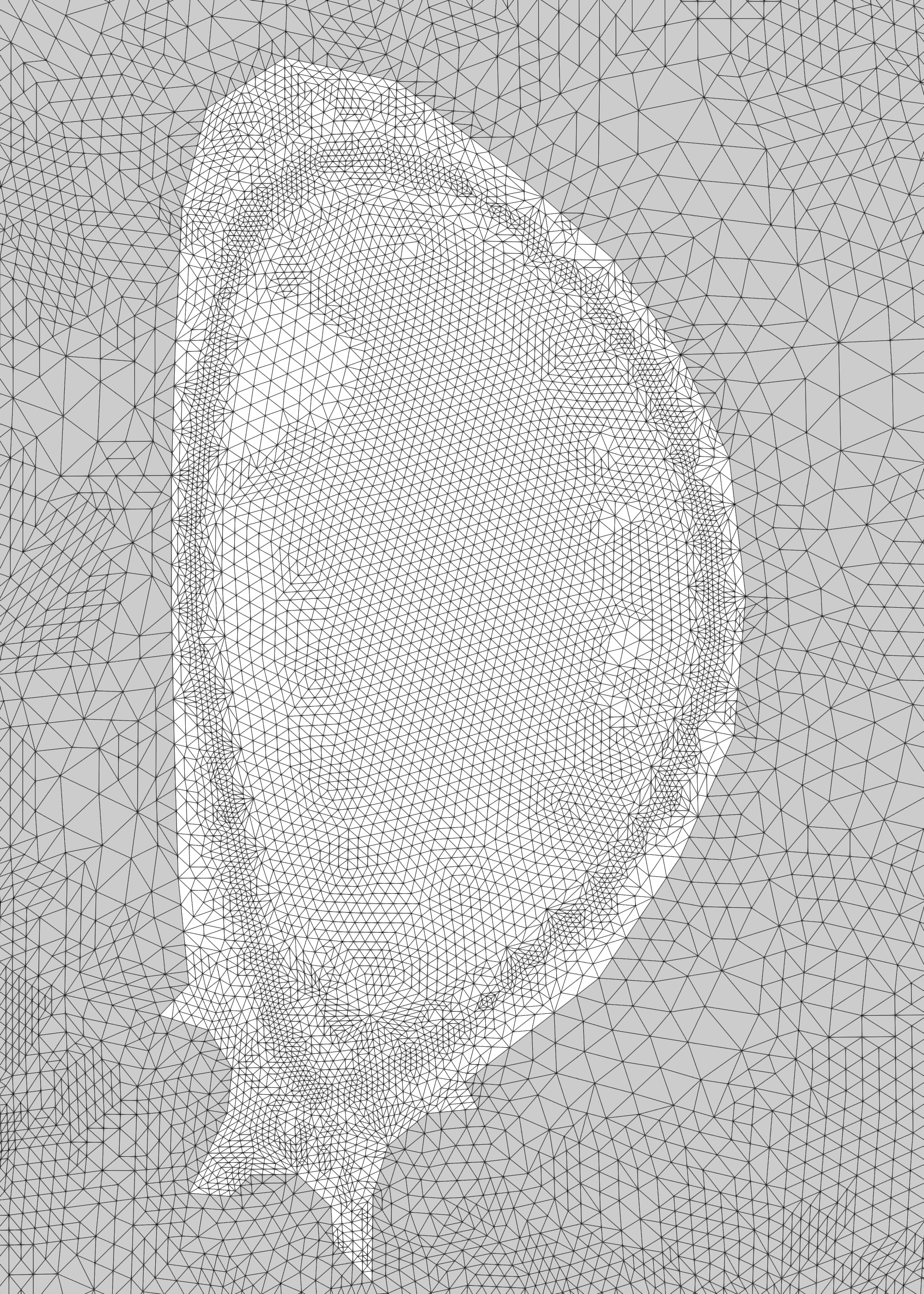}
        \caption{Refined GS solver mesh}
    \end{subfigure}%
    \begin{subfigure}[t]{0.286\textwidth}
        \centering
        \includegraphics[height=0.26\textheight]{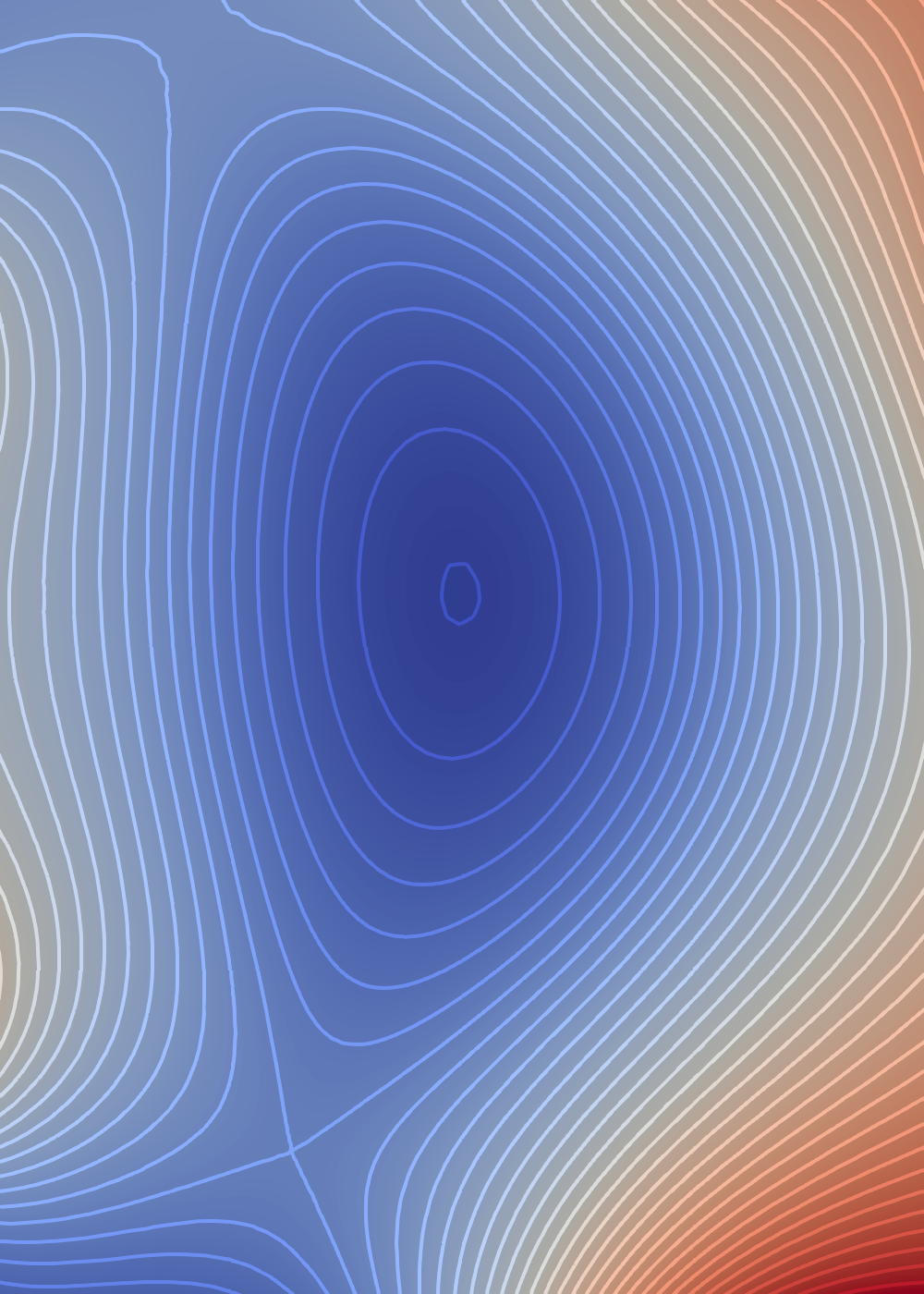}
        \caption{$\Psi$ on the original GS solver mesh}
        \label{fig:psi_on_gs_mesh}
    \end{subfigure}%
    \begin{subfigure}[t]{0.13\textwidth}
        \includegraphics[height=0.26\textheight]{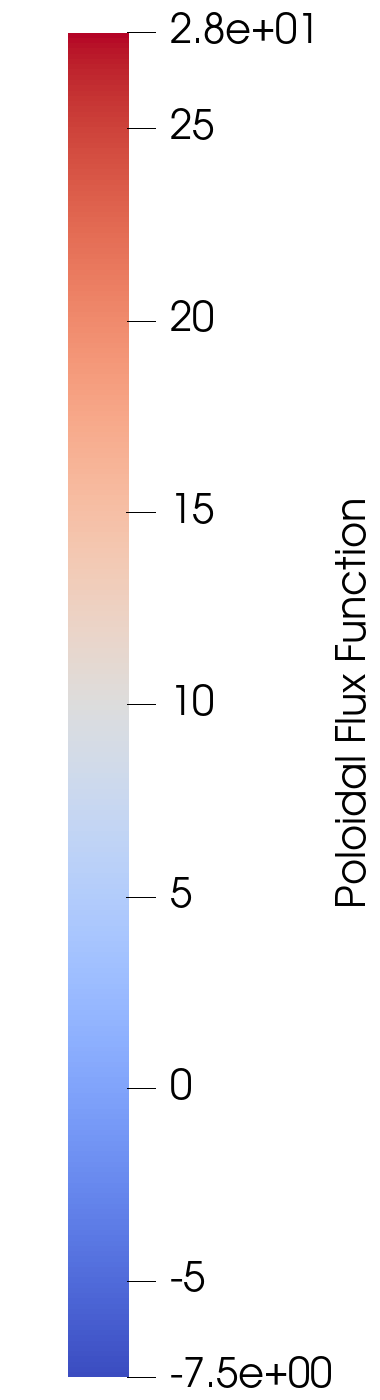}
    \end{subfigure}
    \caption{2D meshes and input poloidal flux function used in the numerical experiments: (a) original GS solver mesh, (b) GS solver mesh with refinement along the separatrix, and (c) $\Psi$ on the original GS solver mesh. The visualizations of the meshes are colored differently by the regions, of which the white region, which is inside the plasma-facing wall, is our MHD equilibrium's plasma region of interest.}
    \label{fig:2d_meshes}
\end{figure}

\subsection{Product rules}
There are times when cylindrical coordinate projection coefficients cannot be separated from the test function, and therefore these terms cannot be implemented using the basic MFEM integrators. In these cases, we apply product rules to break a single term into two terms that can be computed with the basic integrators. For example, in \eqref{J_t_from_B_p_Hdiv}, the $r$-coefficient cannot be separated from $\nabla^\perp (r\eta)$, and therefore, we apply product rule to obtain,
\begin{align}
        \langle \nabla^\perp (r\eta), \mathbf{B}_p \rangle & = \langle \eta\nabla^\perp r, \mathbf{B}_p \rangle + \langle r\nabla^\perp \eta, \mathbf{B}_p \rangle                                                      \nonumber \\
                                                           & = \langle r\nabla^\perp \eta, \mathbf{B}_p \rangle + \langle \eta\mathbf{e}_z, \mathbf{B}_p \rangle, \qquad \forall \eta \in CG(\mathcal{T}_{2D})_{m}.
\end{align}
Likewise, in \eqref{discr_B_p_Hcurl_by_r}, we apply product rule,
\begin{align}
        \left\langle \nabla^\perp \cdot \left(\frac{1}{r}\mathbf{\Sigma}\right), \Psi \right\rangle & = \left\langle \nabla^\perp \left( \frac{1}{r} \right) \cdot \mathbf{\Sigma}, \Psi \right\rangle + \left\langle \frac{1}{r} \nabla^\perp \cdot \mathbf{\Sigma}, \Psi \right\rangle                                        \nonumber\\
                                                                                                    & = \left\langle -\frac{1}{r^2} \Sigma_r, \Psi \right\rangle + \left\langle \frac{1}{r} \nabla^\perp \cdot \mathbf{\Sigma}, \Psi \right\rangle, \qquad \forall \mathbf{\Sigma} \in H(\text{curl}, \mathcal{T}_{2D})_m.
\end{align}

\subsection{Projection between misaligned meshes}
When projecting between misaligned meshes, it is necessary to evaluate finite element grid functions at arbitrary spatial locations. To achieve this, we use the newly developed MFEM interface for GSLIB, which first locates the mesh element corresponding to the target point and then evaluates the polynomial basis functions at that location. 

Further, to examine the impact of mesh misalignment, we adopt two projection methods depending on whether the field is computed weakly or strongly. For fields that are strongly computed, such as~\eqref{discr_B_p_Hdiv}, we introduce auxiliary variables $\Psi \in CG(\mathcal{T}_{2D})_m$ and $f \in CG(\mathcal{T}_{2D})_m$, which are computed via $L^2$ projections from $\Psi \in CG(\mathcal{T}_{GS})_{m}$ and $f \in CG(\mathcal{T}_{GS})_{m}$, respectively. The inner products in the $L^2$ projections are $r$-weighted due to the Jacobian determinant factor from cylindrical coordinates.
In contrast, the introduction of auxiliary variables is not needed for weak computations, such as~\eqref{discr_B_p_Hcurl}, as $\Psi$ and $f$ appear directly in the right-hand side. In this case, we directly evaluate their values at the quadrature points on grid functions defined in $CG(\mathcal{T}_{GS})_{m}$.

\paragraph{Reproducibility}
All implementation and numerical tests, including the projection approaches mentioned above, are available in the \texttt{tds-load} branch of MFEM at \url{https://github.com/mfem/mfem/tree/tds-load}. During our implementation process, the MFEM version was updated to version~4.6.


\section{Numerical results}
\label{sec:results}
In this section, we present numerical results demonstrating the influence of the three error sources discussed above. First, we examine the effects of different finite element space choices. Next, we analyze the impact of mesh misalignment. Finally, we investigate the influence of mesh refinement along the separatrix as a way to reduce errors introduced by possibly under-resolved strong gradients near the separatrix.
\subsection{Choice of finite element spaces}
\label{sec:results-finite-element-space}
First, to focus on the error induced by the choice of finite element spaces, we begin by using the same mesh for both the MHD and GS solvers. As discussed in Section \ref{sec:implementation}, finite element-based GS solvers typically return $\Psi$ as a function in $CG(\mathcal{T}_{GS})_1$. The field $f = f\big(\Psi(r, z)\big)$ (as well as the pressure $p$) is then given as a function of $\Psi$, which, after interpolation at the degree-of-freedom locations, also lies in $CG(\mathcal{T}_{GS})_1$.

Given these spaces for $\Psi$ and $f$, and following our discussion on compatible finite element spaces in Section \ref{sec:numerical_method}, a natural choice for the magnetic field is to place the poloidal component $\mathbf{B}_p$ in $H(\text{div}, \mathcal{T}_{2D})_m$ and the toroidal component $B_t$ in $CG(\mathcal{T}_{2D})_{m}$. The corresponding current density spaces then follow as $\mathbf{J}_p$ in $H(\text{curl}, \mathcal{T}_{2D})_m$ and $J_t$ in $CG(\mathcal{T}_{2D})_{m}$. Altogether, this leads to the following projections:
\begin{align}
    \begin{split}
         & \Psi \in CG(\mathcal{T}_{2D})_m \;\;\; \rightarrow \;\;\; \mathbf{B}_p \in H(\text{div}, \mathcal{T}_{2D})_m\;\;\; \rightarrow \;\;\; J_t \in CG(\mathcal{T}_{2D})_{m}, \\
         & f \in CG(\mathcal{T}_{2D})_m \;\;\; \rightarrow \;\;\; B_t \in CG(\mathcal{T}_{2D})_m\;\;\; \rightarrow\;\;\; \mathbf{J}_p \in H(\text{div}, \mathcal{T}_{2D})_m,
    \end{split}
\end{align}
where we set $m = 1$ in view of the first-order polynomial functions $\Psi$ and $f$ returned by the GS solver.

However, as discussed in Section \ref{sec:discretization}, the above choice of spaces does not naturally follow from a point of view of loading to 3D MHD fields. From \eqref{eq:B_spaces_div}--\eqref{eq:J_spaces_curl}, we have shown that a poloidal component in $H(\text{div}, \mathcal{T}_{2D})_m$ is naturally compatible with a toroidal component in $DG(\mathcal{T}_{2D})_{m-1}$ when constructing 3D finite element spaces from 2D ones via tensor products, while a poloidal component in $H(\text{curl}, \mathcal{T}_{2D})_m$ is naturally compatible with a toroidal component in $CG(\mathcal{T}_{2D})_{m}$.
With this in mind, we can consider two alternative projection paths. The first projection path is to put $B_t$ in $DG(\mathcal{T}_{2D})_{m-1}$, and correspondingly $\mathbf{J}_p$ in $H(\text{curl}, \mathcal{T}_{2D})_m$ in view of the 2D compatible finite element chain. This leads to the following projections:
\begin{align}
    \begin{split}
         & \Psi \in CG(\mathcal{T}_{2D})_m \;\;\; \overset{\eqref{discr_B_p_Hdiv}}{\rightarrow} \;\;\; \mathbf{B}_p \in H(\text{div}, \mathcal{T}_{2D})_m\;\;\; \overset{\eqref{J_t_from_B_p_Hdiv}}{\rightarrow} \;\;\; J_t \in CG(\mathcal{T}_{2D})_{m}, \\
         & f \in CG(\mathcal{T}_{2D})_m \;\;\; \overset{\eqref{discr_B_t_DG}}{\rightarrow} \;\;\; B_t \in DG(\mathcal{T}_{2D})_{m-1}\;\;\; \overset{\eqref{discr_J_p_Hcurl}}{\rightarrow} \;\;\; \mathbf{J}_p \in H(\text{curl}, \mathcal{T}_{2D})_m,
        \label{eq:proj_path_A}
    \end{split}
\end{align}
where we included equation numbers for the corresponding finite element computations from Section \ref{sec:numerical_method}. The other alternative projection path in view of our 3D tensor products is to instead project $\mathbf{B}_p$ into $H(\text{curl}, \mathcal{T}_{2D})_m$ and keep $B_t$ in $CG(\mathcal{T}_{2D})_{m}$. This in turn leads to the following projections:
\begin{align}
    \begin{split}
         & \Psi \in CG(\mathcal{T}_{2D})_m \;\;\; \overset{\eqref{discr_B_p_Hcurl}}{\rightarrow} \;\;\; \mathbf{B}_p \in H(\text{curl}, \mathcal{T}_{2D})_m\;\;\; \overset{\eqref{J_t_from_B_p_Hcurl}}{\rightarrow} \;\;\; J_t \in DG(\mathcal{T}_{2D})_{m-1}, \\
         & f \in CG(\mathcal{T}_{2D})_m \;\;\; \overset{\eqref{discr_B_t_CG}}{\rightarrow} \;\;\; B_t \in CG(\mathcal{T}_{2D})_{m}\;\;\; \overset{\eqref{J_p_from_B_t_CG}}{\rightarrow} \;\;\;\; \mathbf{J}_p \in H(\text{div}, \mathcal{T}_{2D})_m,
        \label{eq:proj_path_B}
    \end{split}
\end{align}
where again we included the corresponding finite element computations' equation numbers.

\begin{figure}[htb]
    \centering
    \begin{subfigure}[t]{0.22\textwidth}
        \centering
        \includegraphics[height=0.20\textheight]{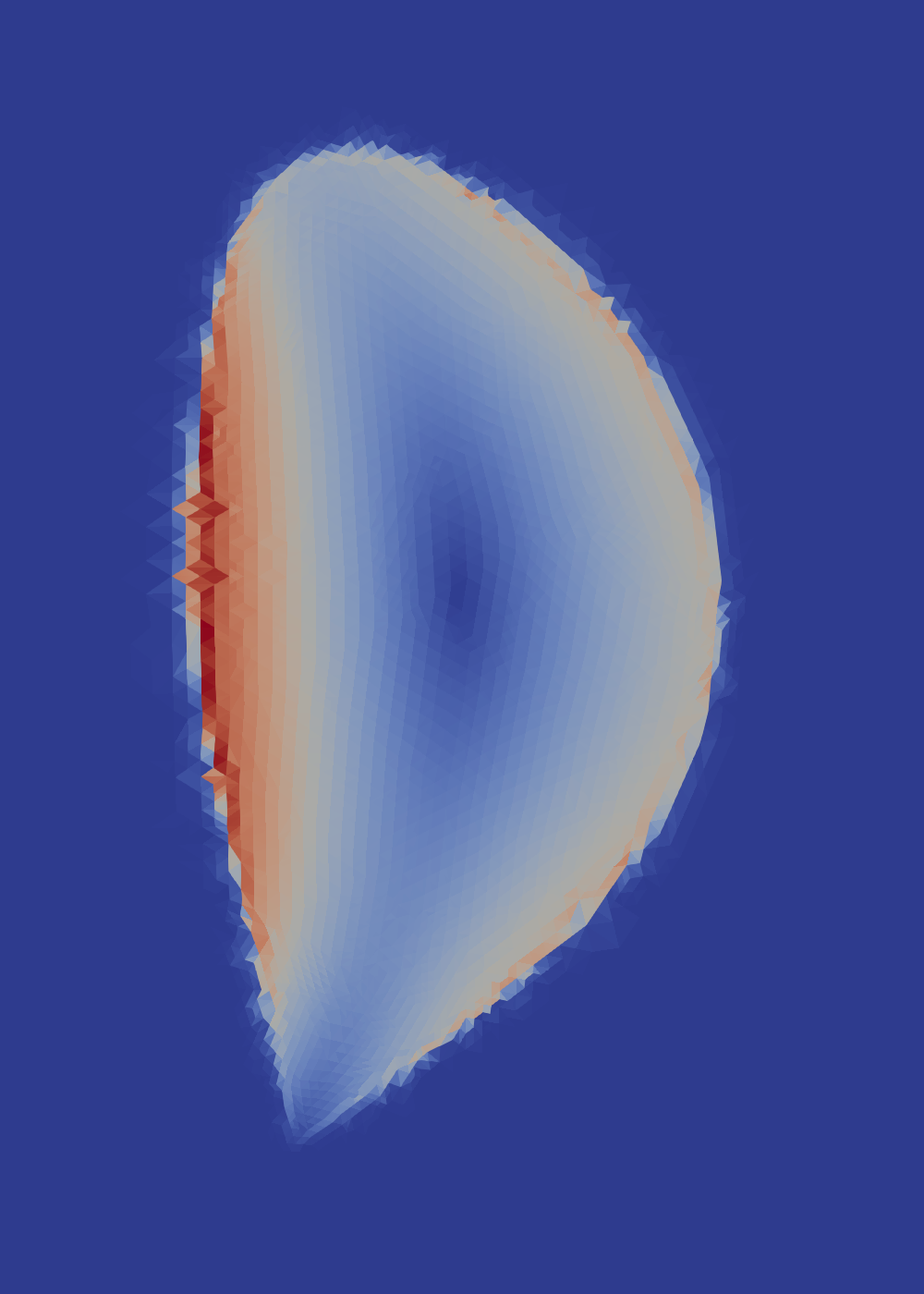}
        \caption{$H(\text{div}, \mathcal{T}_{2D})_m$~\eqref{J_p_direct}}
    \end{subfigure}%
    \begin{subfigure}[t]{0.22\textwidth}
        \centering
        \includegraphics[height=0.20\textheight]{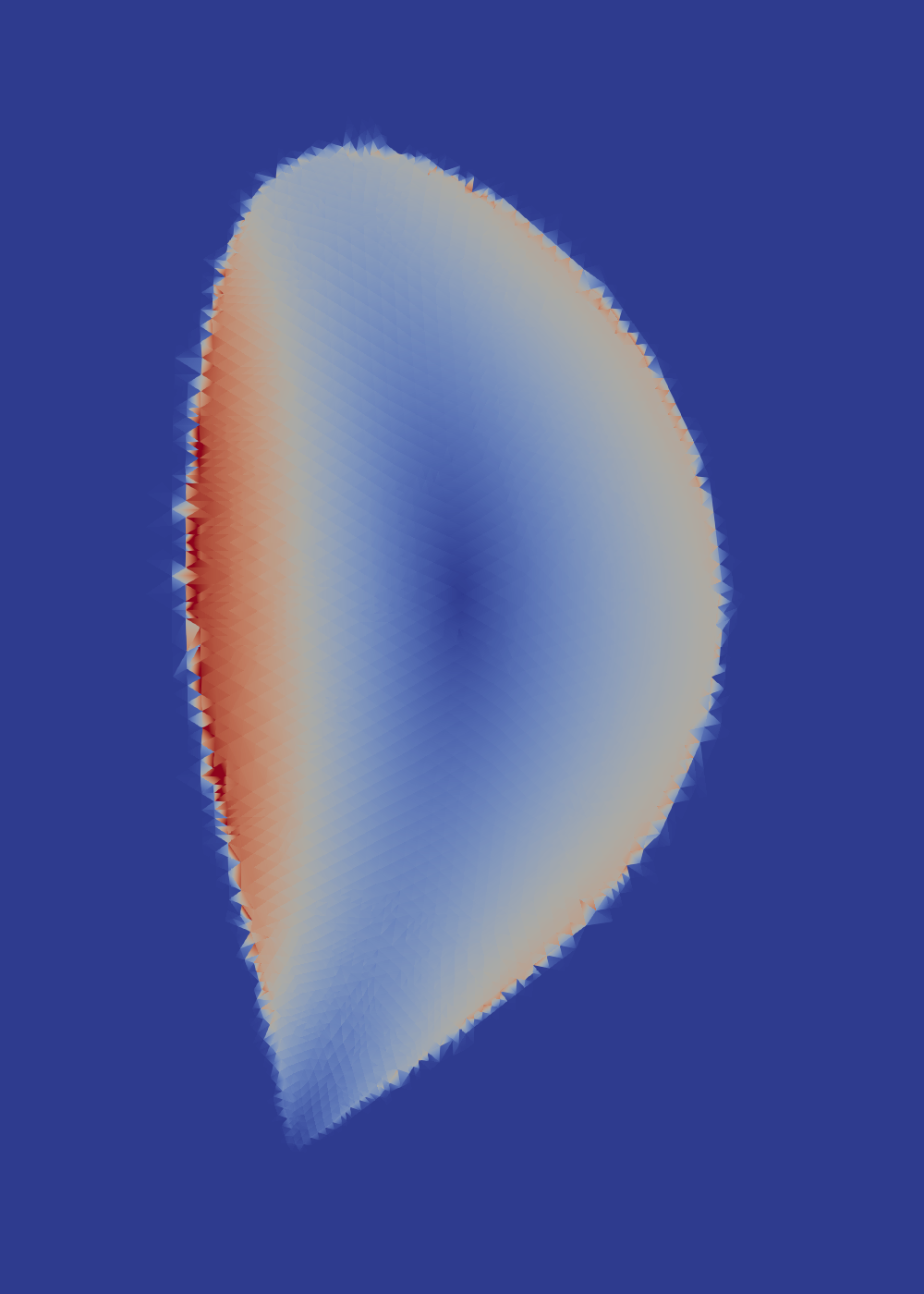}
        \caption{$H(\text{curl}, \mathcal{T}_{2D})_m$~\eqref{eq:proj_path_A}}
    \end{subfigure}%
    \begin{subfigure}[t]{0.22\textwidth}
        \centering
        \includegraphics[height=0.20\textheight]{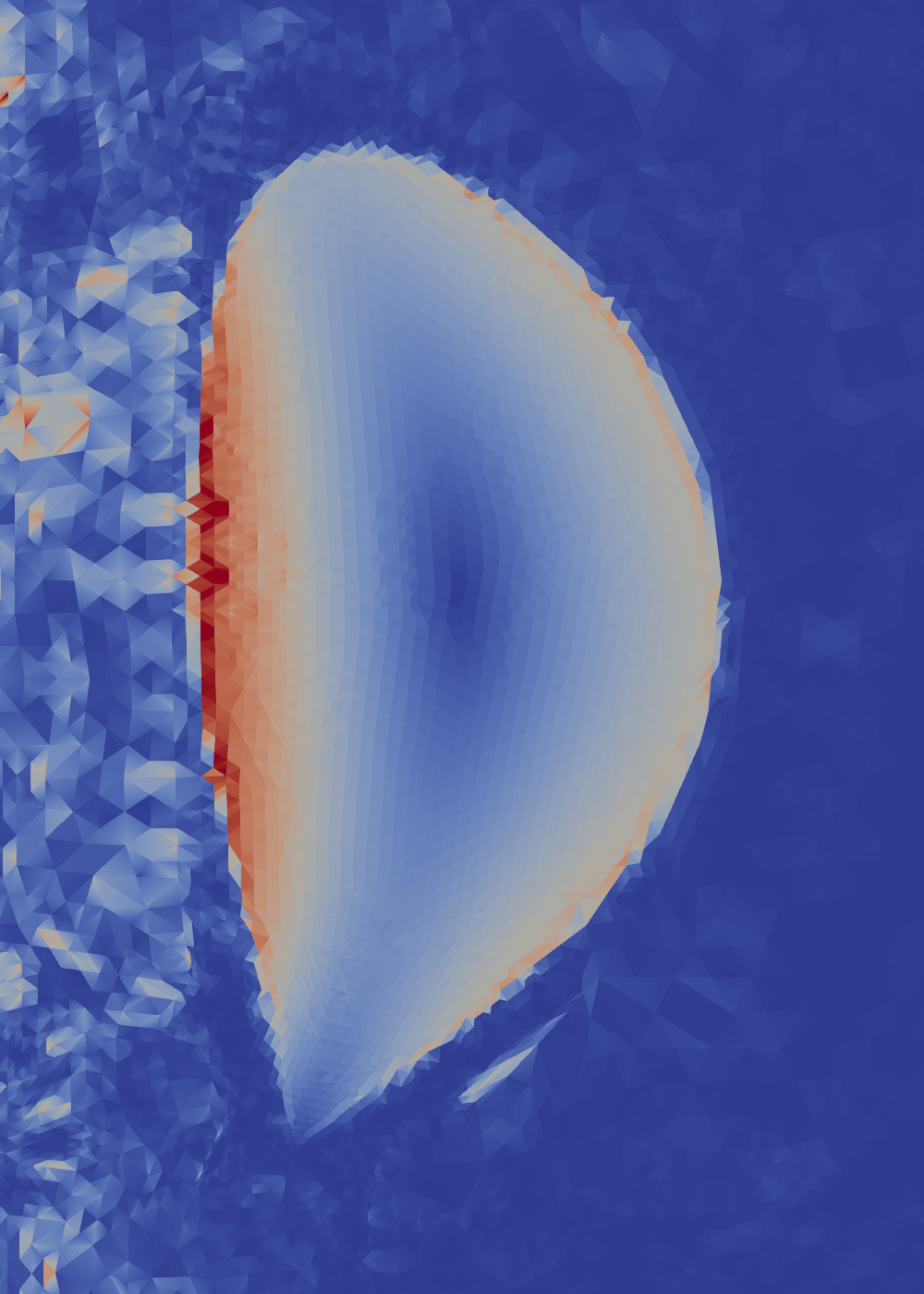}
        \caption{$H(\text{div}, \mathcal{T}_{2D})_m$~\eqref{eq:proj_path_B}}
    \end{subfigure}%
    \begin{subfigure}[t]{0.22\textwidth}
        \centering
        \includegraphics[height=0.20\textheight]{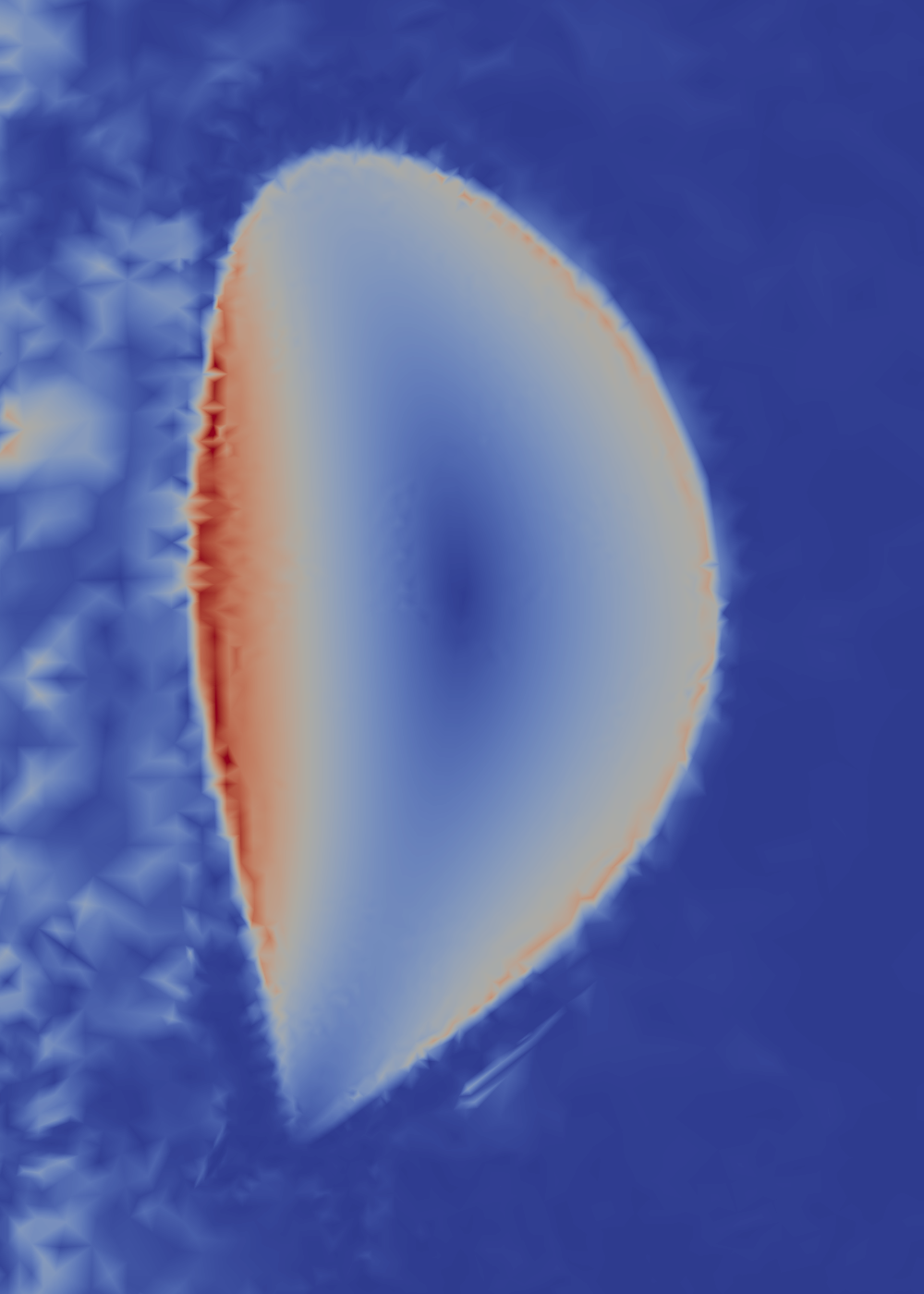}
        \caption{$CG(\mathcal{T}_{2D})_m^2$~\eqref{eq:proj_path_C}}
    \end{subfigure}%
    \begin{subfigure}[t]{0.1\textwidth}
        \includegraphics[height=0.20\textheight]{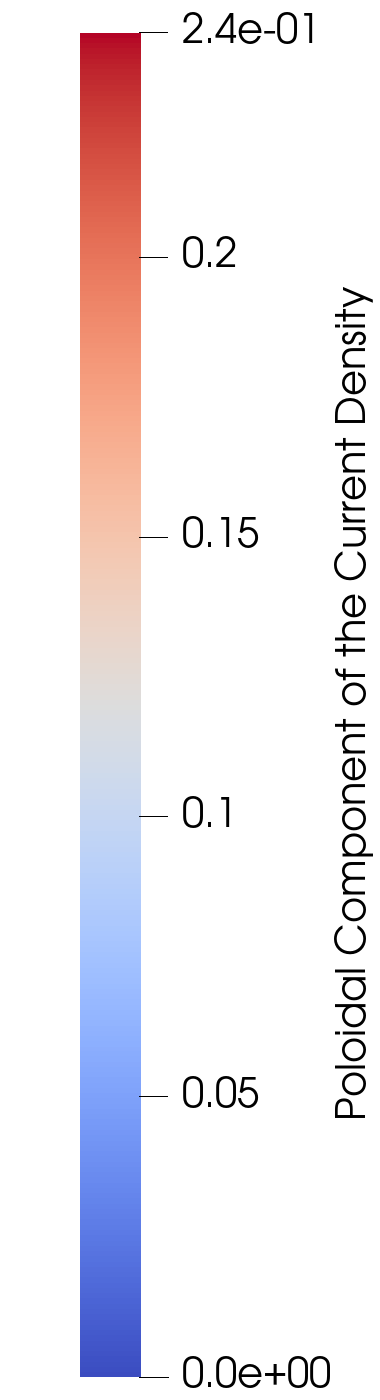}
    \end{subfigure}

    \vspace{1em}

    \begin{subfigure}[t]{0.22\textwidth}
        \centering
        \includegraphics[height=0.20\textheight]{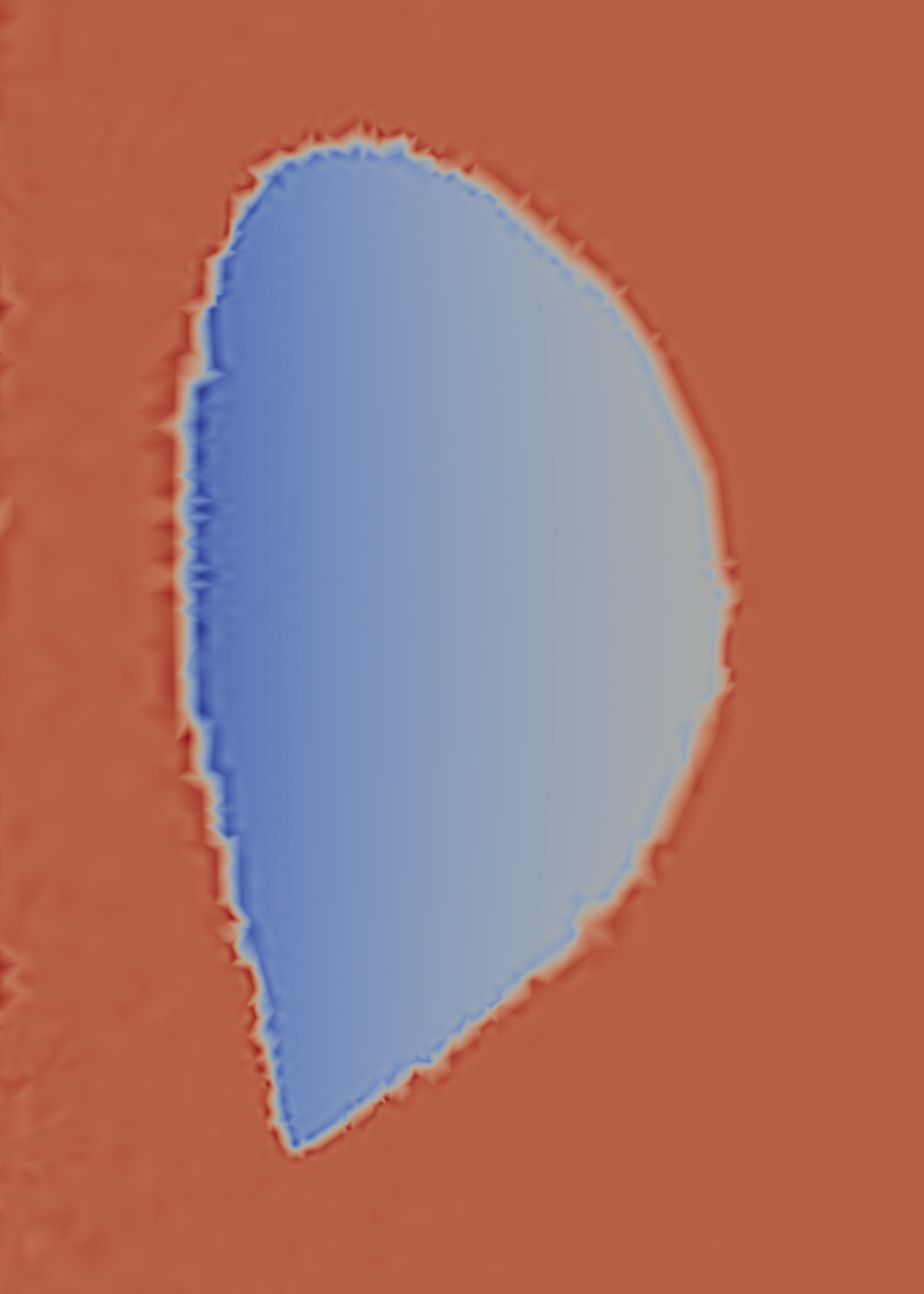}
        \caption{$CG(\mathcal{T}_{2D})_{m}$~\eqref{J_t_direct}}
    \end{subfigure}%
    \begin{subfigure}[t]{0.22\textwidth}
        \centering
        \includegraphics[height=0.20\textheight]{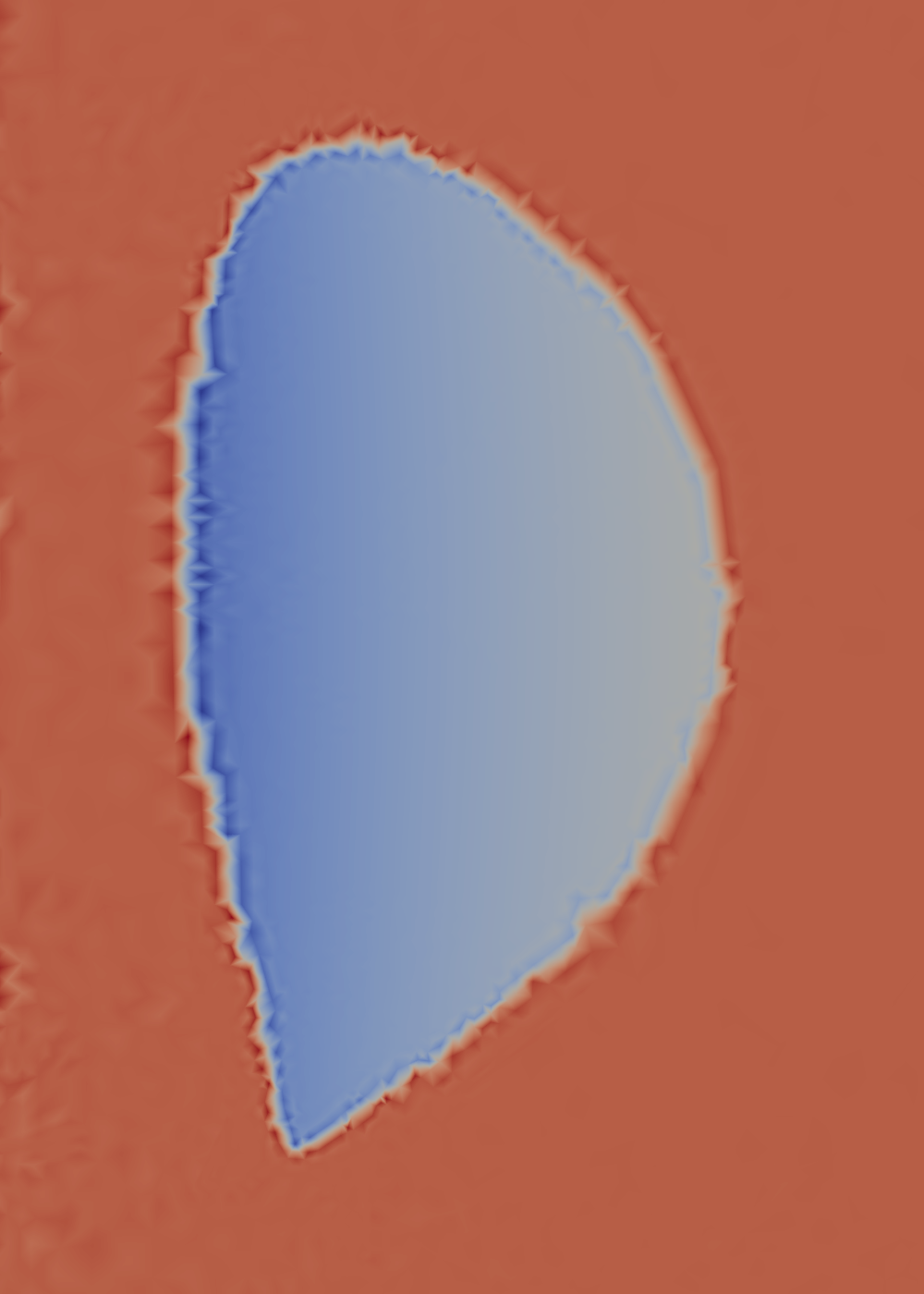}
        \caption{$CG(\mathcal{T}_{2D})_{m}$~\eqref{eq:proj_path_A}}
    \end{subfigure}%
    \begin{subfigure}[t]{0.22\textwidth}
        \centering
        \includegraphics[height=0.20\textheight]{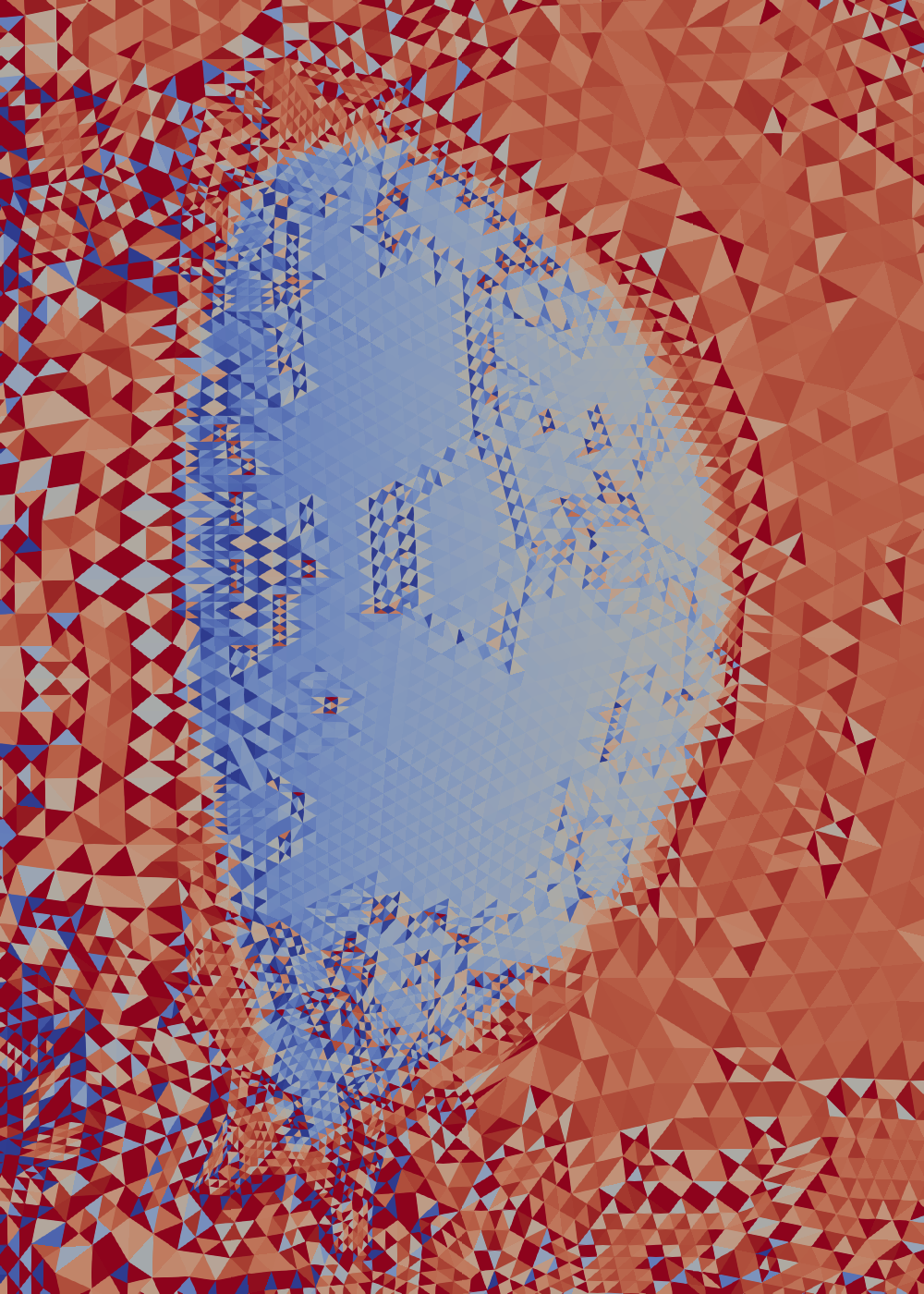}
        \caption{$DG(\mathcal{T}_{2D})_{m-1}$~\eqref{eq:proj_path_B}}
    \end{subfigure}%
    \begin{subfigure}[t]{0.22\textwidth}
        \centering
        \includegraphics[height=0.20\textheight]{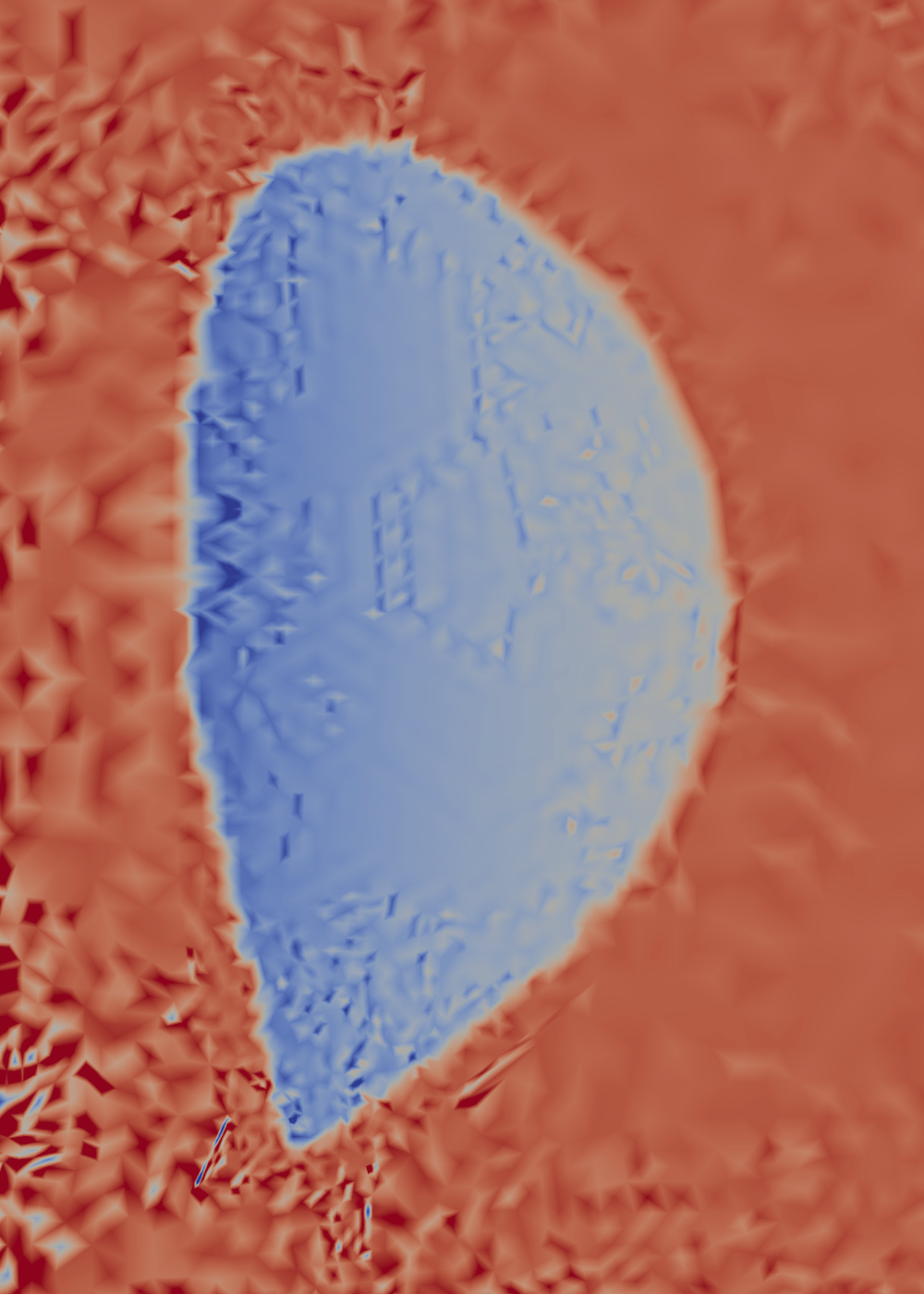}
        \caption{$CG(\mathcal{T}_{2D})_{m}$~\eqref{eq:proj_path_C}}
    \end{subfigure}%
    \begin{subfigure}[t]{0.1\textwidth}
        \includegraphics[height=0.20\textheight]{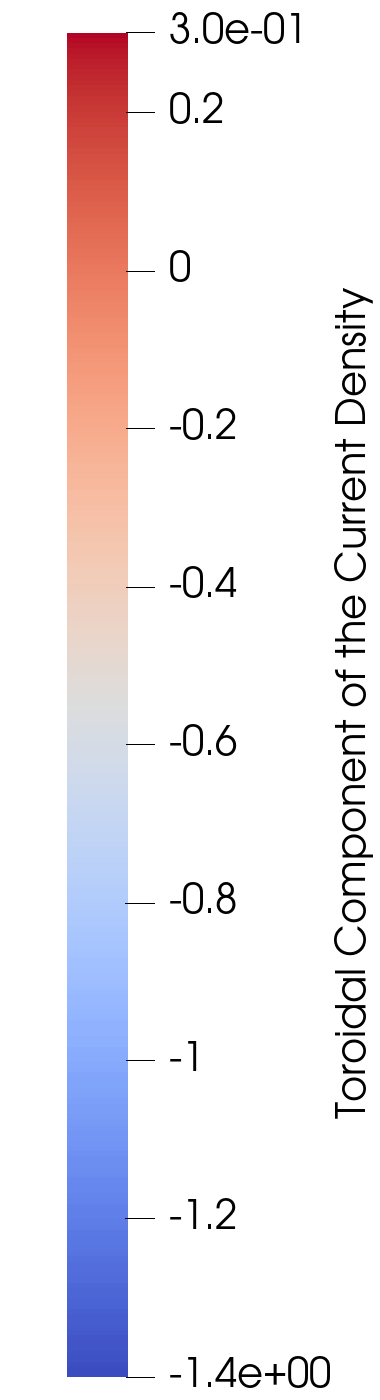}
    \end{subfigure}

    \caption{Comparison of the current density from different projection paths, for $\Psi, f \in CG(\mathcal{T}_{2D})_m$ and $m = 1$.
        Top row: Magnitude of poloidal component $\mathbf{J}_p$.
        (a) reference $\mathbf{J}_p \in H(\text{div}, \mathcal{T}_{2D})_m$ from \eqref{J_p_direct},
        (b) $\mathbf{J}_p \in H(\text{curl}, \mathcal{T}_{2D})_m$ from \eqref{eq:proj_path_A},
        (c) $\mathbf{J}_p \in H(\text{div}, \mathcal{T}_{2D})_m$ from \eqref{eq:proj_path_B},
        (d) $\mathbf{J}_p \in CG(\mathcal{T}_{2D})_m^2$ from \eqref{eq:proj_path_C}.
        Bottom row: Toroidal component $J_t$.
        (e) reference $J_t \in CG(\mathcal{T}_{2D})_{m}$ from \eqref{J_t_direct},
        (f) $J_t \in CG(\mathcal{T}_{2D})_{m}$ from \eqref{eq:proj_path_A},
        (g) $J_t \in DG(\mathcal{T}_{2D})_{m-1}$ from \eqref{eq:proj_path_B},
        (h) $J_t \in CG(\mathcal{T}_{2D})_{m}$ from \eqref{eq:proj_path_C}.}
    \label{fig:J_comparison}
\end{figure}
Besides the projection paths based on compatible finite element spaces, we also consider a setup based on vector-CG spaces. Under this setup, $\mathbf{B}_p$ and $\mathbf{J}_p$ are both in $CG(\mathcal{T}_{2D})_m^2$ and $B_t$ and $J_t$ are both in $CG(\mathcal{T}_{2D})_{m}$. This leads to the following projections:
\begin{align}
    \begin{split}
         & \Psi \in CG(\mathcal{T}_{2D})_m \;\;\; \overset{\eqref{discr_B_p_CG}}{\rightarrow} \;\;\; \mathbf{B}_p \in CG(\mathcal{T}_{2D})_m^2\;\;\; \overset{\eqref{discr_J_t_CG}}{\rightarrow} \;\;\; J_t \in CG(\mathcal{T}_{2D})_{m}, \\
         & f \in CG(\mathcal{T}_{2D})_m \;\;\; \overset{\eqref{discr_B_t_CG}}{\rightarrow} \;\;\; B_t \in CG(\mathcal{T}_{2D})_{m}\;\;\; \overset{\eqref{discr_J_p_CG}}{\rightarrow}\;\;\; \mathbf{J}_p \in CG(\mathcal{T}_{2D})_m^2,
        \label{eq:proj_path_C}
    \end{split}
\end{align}
where all differential operations are computed strongly using, e.g., \eqref{discr_J_p_CG} and \eqref{discr_J_t_CG} for the current density's components.

\begin{figure}[h]
    \centering
    \begin{subfigure}[t]{0.22\textwidth}
        \centering
        \includegraphics[height=0.2\textheight]{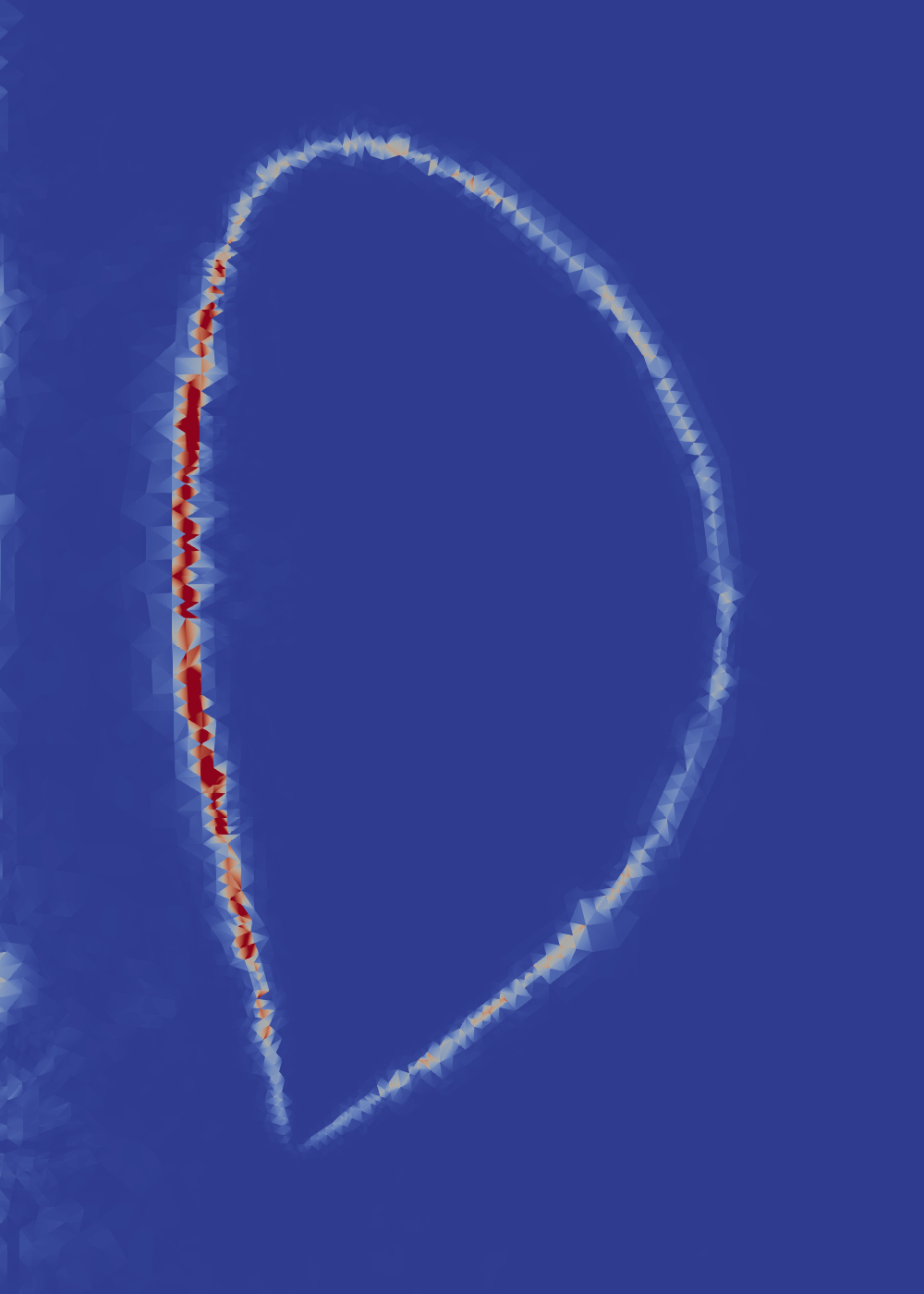}
        \caption{$H(\text{div}, \mathcal{T}_{2D})_m$~\eqref{eq:proj_path_A}}
    \end{subfigure}%
    \begin{subfigure}[t]{0.22\textwidth}
        \centering
        \includegraphics[height=0.2\textheight]{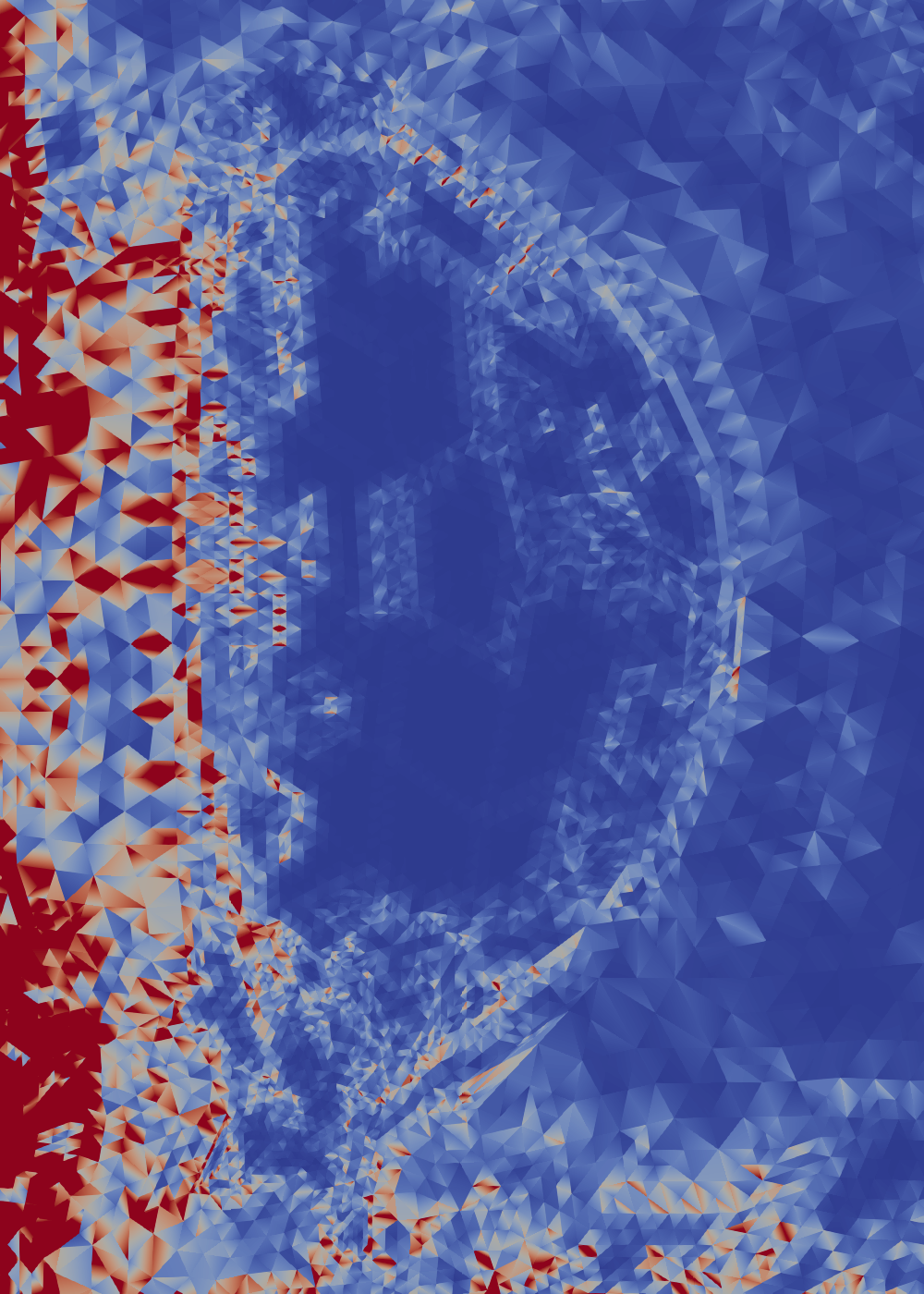}
        \caption{$H(\text{curl}, \mathcal{T}_{2D})_m$~\eqref{eq:proj_path_B}}
    \end{subfigure}%
    \begin{subfigure}[t]{0.22\textwidth}
        \centering
        \includegraphics[height=0.2\textheight]{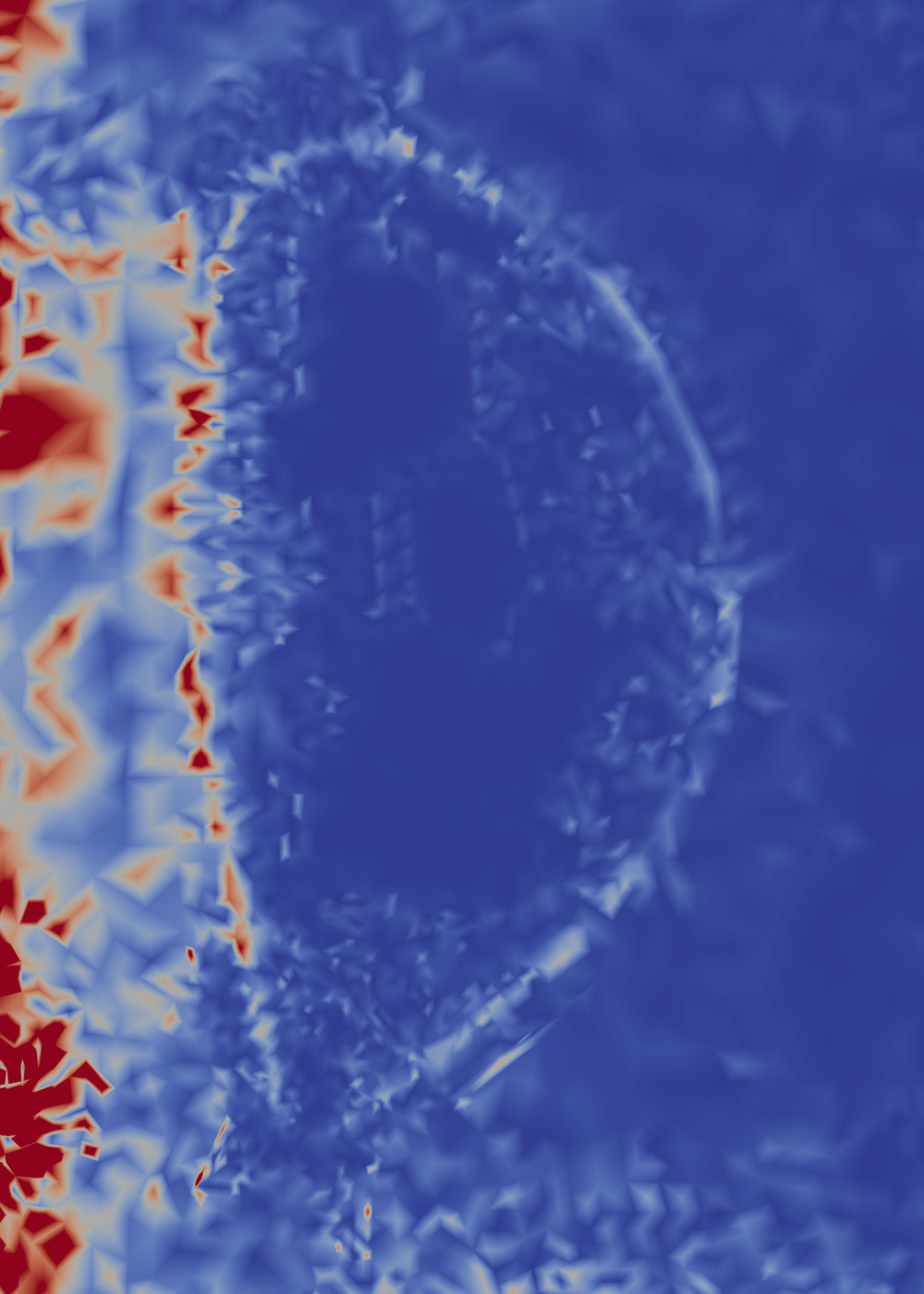}
        \caption{$CG(\mathcal{T}_{2D})_m^2$~\eqref{eq:proj_path_C}}
    \end{subfigure}%
    \begin{subfigure}[t]{0.1\textwidth}
        \includegraphics[height=0.2\textheight]{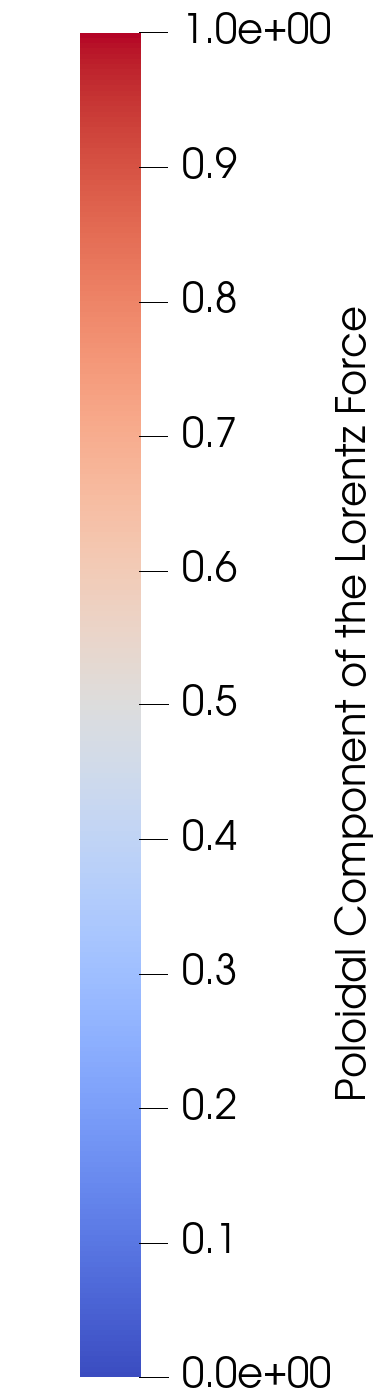}
    \end{subfigure}

    \vspace{1em}

    \begin{subfigure}[t]{0.22\textwidth}
        \centering
        \includegraphics[height=0.2\textheight]{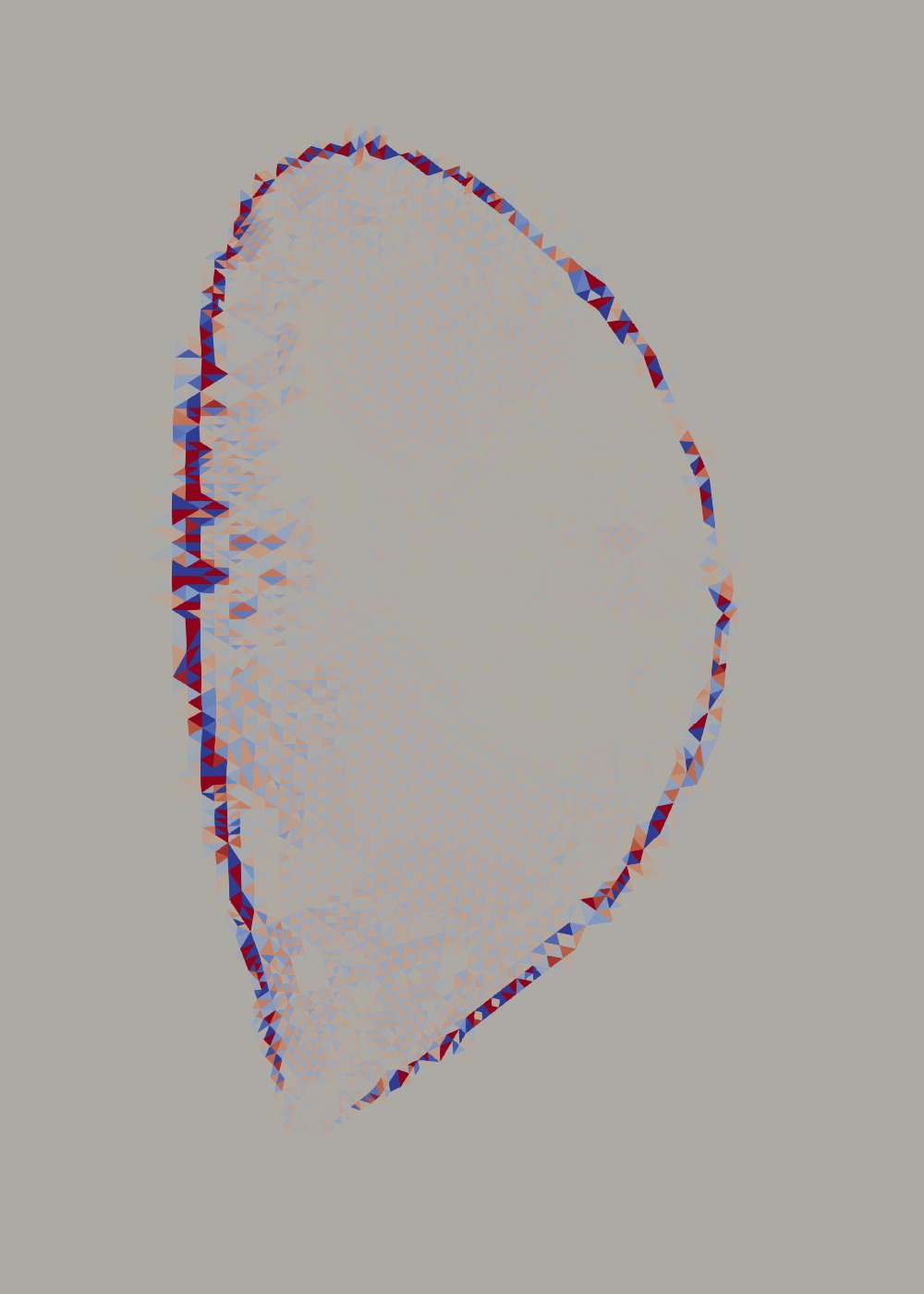}
        \caption{$DG(\mathcal{T}_{2D})_{m-1}$~\eqref{eq:proj_path_A}}
    \end{subfigure}%
    \begin{subfigure}[t]{0.22\textwidth}
        \centering
        \includegraphics[height=0.2\textheight]{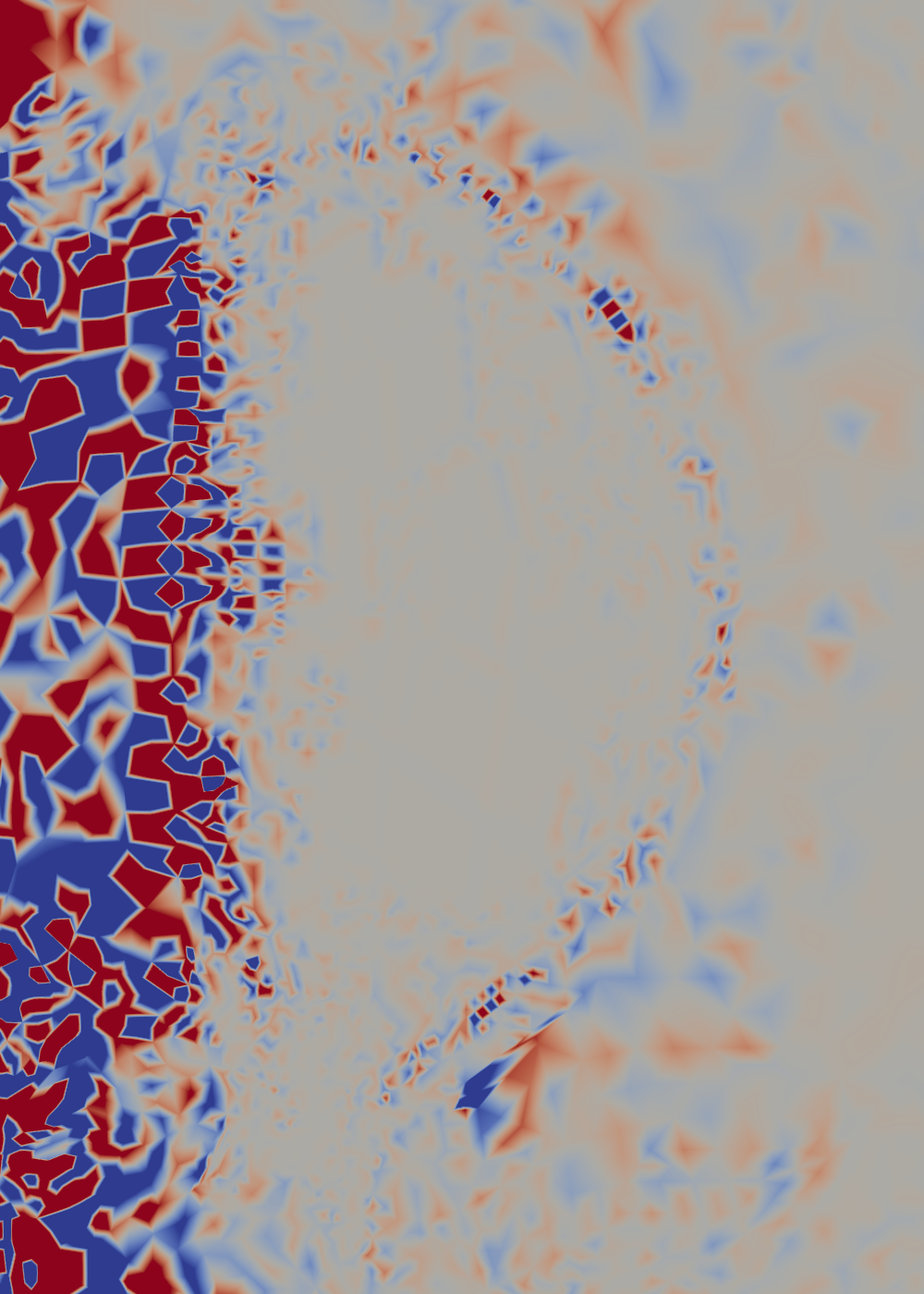}
        \caption{$CG(\mathcal{T}_{2D})_{m}$~\eqref{eq:proj_path_B}}
    \end{subfigure}%
    \begin{subfigure}[t]{0.22\textwidth}
        \centering
        \includegraphics[height=0.2\textheight]{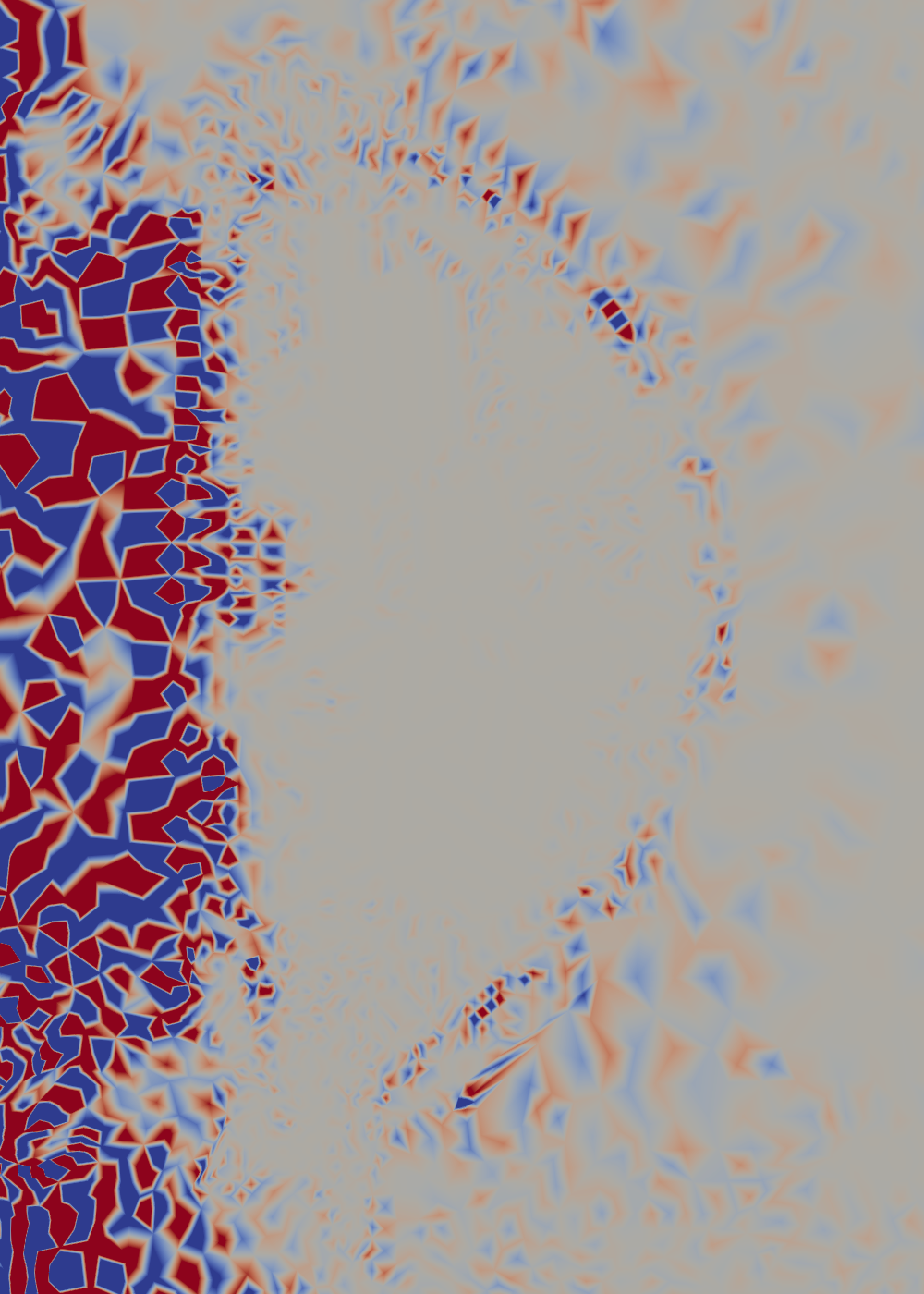}
        \caption{$CG(\mathcal{T}_{2D})_{m}$~\eqref{eq:proj_path_C}}
    \end{subfigure}%
    \begin{subfigure}[t]{0.1\textwidth}
        \includegraphics[height=0.2\textheight]{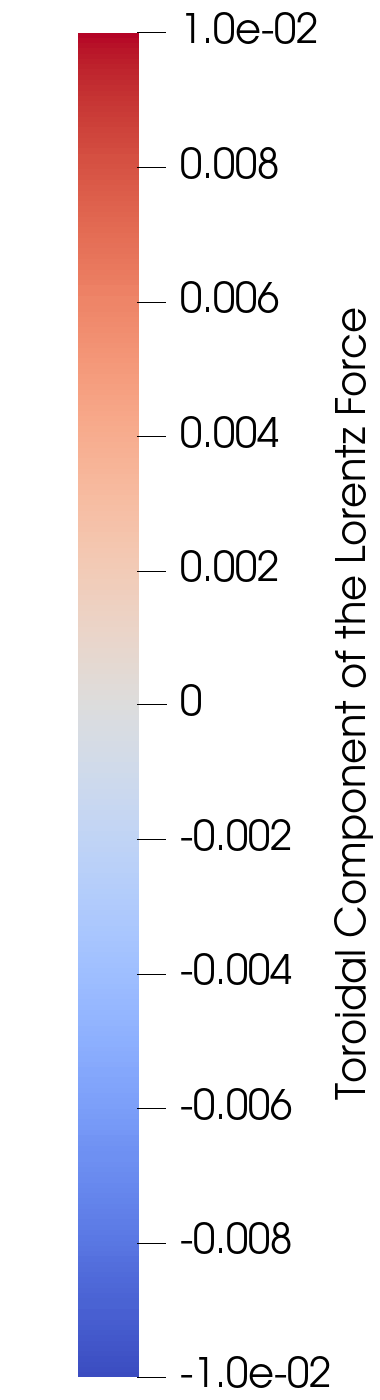}
    \end{subfigure}

    \caption{Comparison of the Lorentz force $[\mathbf{B}\times\mathbf{J}]$ from different projection paths into the corresponding poloidal and toroidal velocity spaces $\mathbb{V}_p$, $\mathbb{V}_t$, respectively.
        Top row: Poloidal component $[\mathbf{B}\times\mathbf{J}]_p$.
        (a) $\mathbb{V}_p = H(\text{div}, \mathcal{T}_{2D})_m$ from \eqref{eq:proj_path_A},
        (b) $\mathbb{V}_p = H(\text{curl}, \mathcal{T}_{2D})_m$ from \eqref{eq:proj_path_B},
        (c) $\mathbb{V}_p = CG(\mathcal{T}_{2D})_m^2$ from \eqref{eq:proj_path_C}.
        Bottom row: Toroidal component $[\mathbf{B}\times\mathbf{J}]_t$.
        (d) $\mathbb{V}_t = DG(\mathcal{T}_{2D})_{m-1}$ from \eqref{eq:proj_path_A},
        (e) $\mathbb{V}_t =  CG(\mathcal{T}_{2D})_{m}$ from \eqref{eq:proj_path_B},
        (f) $\mathbb{V}_t =  CG(\mathcal{T}_{2D})_{m}$ from \eqref{eq:proj_path_C}.}
    \label{fig:JxB_comparison}
\end{figure}

We test these different paths considering GS solutions from \cite{serino2024adaptive}, as shown in Figure~\ref{fig:psi_on_gs_mesh}. First, the resulting magnetic fields do not show visually distinguishable differences and are shown in~\ref{apx:B_field_proj_paths}. Second, we compare the current density fields obtained from the three projection paths, together with results from the direct computation \eqref{J_direct} as a reference. Since computing $\mathbf{J}$ directly from $f$ and $\Psi$ is not impacted by the discretization error involved in magnetic field projections, the results from \eqref{J_direct} should naturally contain less error. Figure~\ref{fig:J_comparison} shows the current density fields from the three projection paths and the reference results.

The first row depicts the poloidal component of the current density fields. Since there are two components, $J_r$ and $J_z$ in $\mathbf{J}_p$, we show the magnitude of the field. Within the plasma region (see Figure \ref{fig:2d_meshes} for the plasma domain described by the separatrix), all three projection paths yield results that are similar and are in relatively good agreement with the reference. In contrast, near the wall region and beyond, the projection paths \eqref{eq:proj_path_B} and \eqref{eq:proj_path_C} exhibit considerable noise. This noise is less important since the equilibrium is not evolved within the walls, and a higher quality mesh within the wall region will likely attenuate this noise.

In contrast, the toroidal component of the current density fields reveals more substantial discrepancies among the three projection paths. The second row shows the toroidal component of the current density fields from the reference result and the three projection paths above. This time, projection paths \eqref{eq:proj_path_B} and \eqref{eq:proj_path_C} also exhibit significant noise within the plasma region, which cannot be trivially eliminated. In contrast, the results from projection path \eqref{eq:proj_path_A} closely match the reference solution and remain largely free of noise. In particular, \eqref{eq:proj_path_A} follows the correct compatible finite element chain for computing $\mathbf{B}_p$ and $J_t$, using a strong perpendicular gradient to compute the divergence-conforming field $\mathbf{B}_p$ from the CG field $\Psi$, and a weak perpendicular gradient to compute the CG field $J_t$ from $\mathbf{B}_p$. On the other hand, such a compatible framework does not hold true for \eqref{eq:proj_path_B}, in which $\Psi$ is first implicitly projected into the DG space (within \eqref{discr_B_p_Hcurl}) when computing the curl-conforming field $\mathbf{B}_p$. Similarly, the vector-CG based approach \eqref{eq:proj_path_C} does not follow a framework of compatible function spaces with respect to the involved differential operators. Overall, this underlines the importance of a compatible choice of finite element spaces when computing the current density field as a diagnostic of the initial magnetic field $\mathbf{B}$ in the context of MHD solvers.

We further verify our findings with the force balance equation~\eqref{equilibrium}. Since $\nabla p = 0$ in this test case, both the toroidal and poloidal components of the Lorentz force should be zero. Figure~\ref{fig:JxB_comparison} shows the Lorentz force from the same three projection paths as considered for the current density. The first row shows the poloidal component. Again, we present the magnitude of $[\mathbf{B}\times\mathbf{J}]_p$ for better visualization. Similarly to our findings for the current density, the projection path \eqref{eq:proj_path_A} gives close-to-zero $[\mathbf{B}\times\mathbf{J}]_p$ magnitudes except for the regions close to the separatrix, while the projection paths \eqref{eq:proj_path_B} and \eqref{eq:proj_path_C} lead to significant noise in the entire domain, including parts of the plasma region away from the separatrix.

This is also evident in the toroidal component of the Lorentz force, shown in the second row. The projection path \eqref{eq:proj_path_B} results in significant noise while the projection path \eqref{eq:proj_path_A} contains much less noise in all regions except for the separatrix. Altogether, given these current density and Lorentz force results, the projection path \eqref{eq:proj_path_A} -- which follows the compatible finite element framework for computations based on the CG field $\Psi$ -- clearly leads to the smallest error, which is confined to the separatrix region.

Although projection path \eqref{eq:proj_path_A} yields the least error in terms of force balance, projection path \eqref{eq:proj_path_B} may still be preferable in some cases, as projecting $\mathbf{B}_p$ in $H(\text{curl}, \mathcal{T}_{2D})$ provides additional advantages in maintaining the divergence-free property of the magnetic field. This holds true especially if $\Psi$ is provided by the GS solver as a DG field, in which case the $\Psi$-related computations in \eqref{eq:proj_path_B} also fully follow the compatible finite element framework, based on the weak magnetic field computation \eqref{discr_B_p_Hcurl}. In this work, we only have access to a GS solver that returns $\Psi$ as a CG field, although upgrades to also support $\Psi$ in DG are under way and we aim to also discuss corresponding GS-to-MHD field transfer results in future work. In particular, as shown in Figure~\ref{fig:div_B_pol_comparison}, the divergence from $\mathbf{B}_p$ in $H(\text{curl}, \mathcal{T}_{2D})$ computed as \eqref{div_B_weak} through projection path \eqref{eq:proj_path_B} is negligible in the entire region (see also Figure \ref{fig:div_B_pol_per_refine} in \ref{apx:div_B_pol_per_refine}), while the divergences from $\mathbf{B}_p$ in $H(\text{div}, \mathcal{T}_{2D})$ and $CG(\mathcal{T}_{2D})_m^2$ from projection paths \eqref{eq:proj_path_A} and \eqref{eq:proj_path_C}, respectively, contain significant noise throughout the domain.
\begin{figure}[!htbp]
    \centering
    \begin{subfigure}[t]{0.22\textwidth}
        \centering
        \includegraphics[height=0.2\textheight]{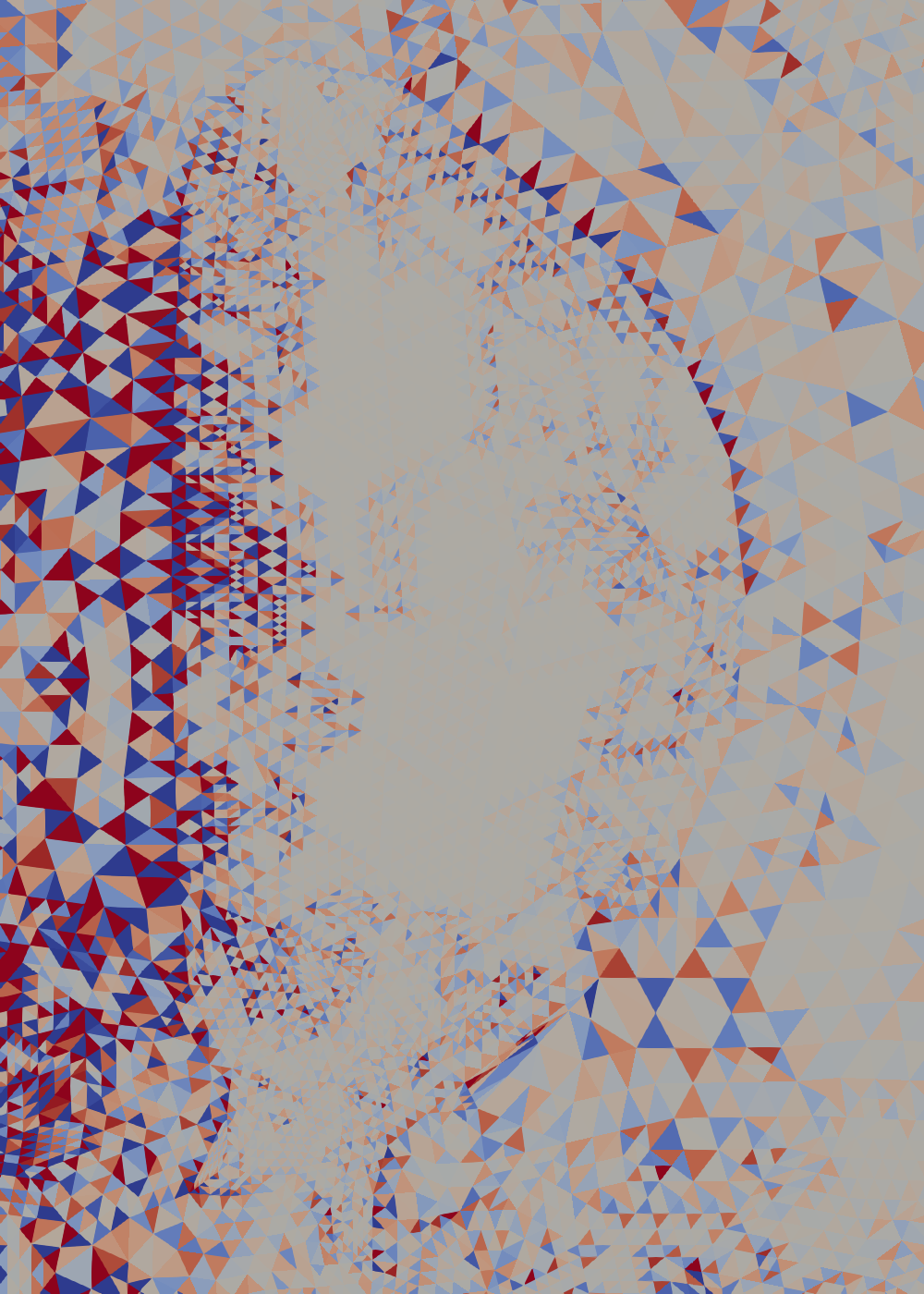}
        \caption{$DG(\mathcal{T}_{2D})_{m-1}$~\eqref{eq:proj_path_A}}
    \end{subfigure}%
    \begin{subfigure}[t]{0.22\textwidth}
        \centering
        \includegraphics[height=0.2\textheight]{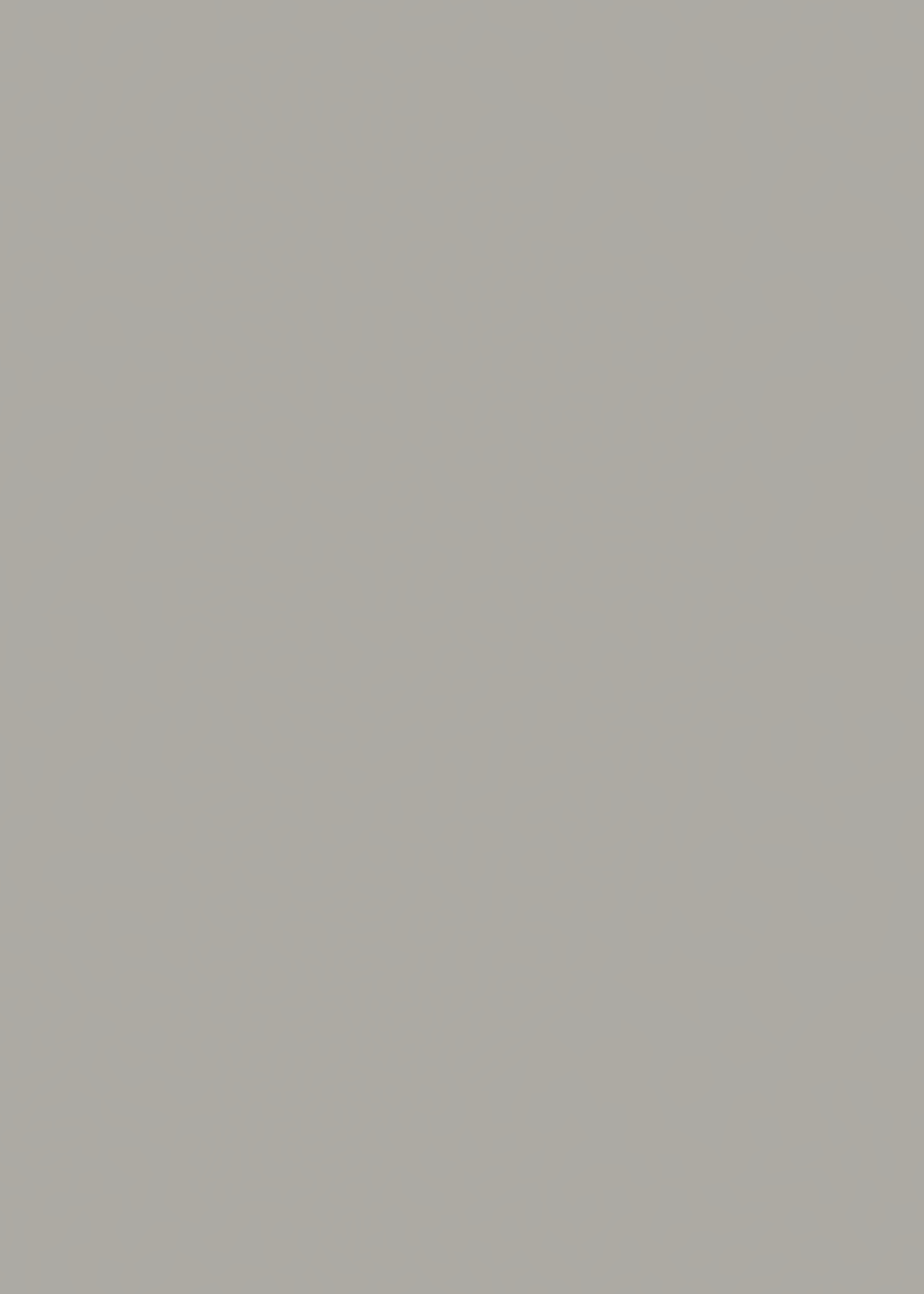}
        \caption{$CG(\mathcal{T}_{2D})_{m}$~\eqref{eq:proj_path_B}}
    \end{subfigure}%
    \begin{subfigure}[t]{0.22\textwidth}
        \centering
        \includegraphics[height=0.2\textheight]{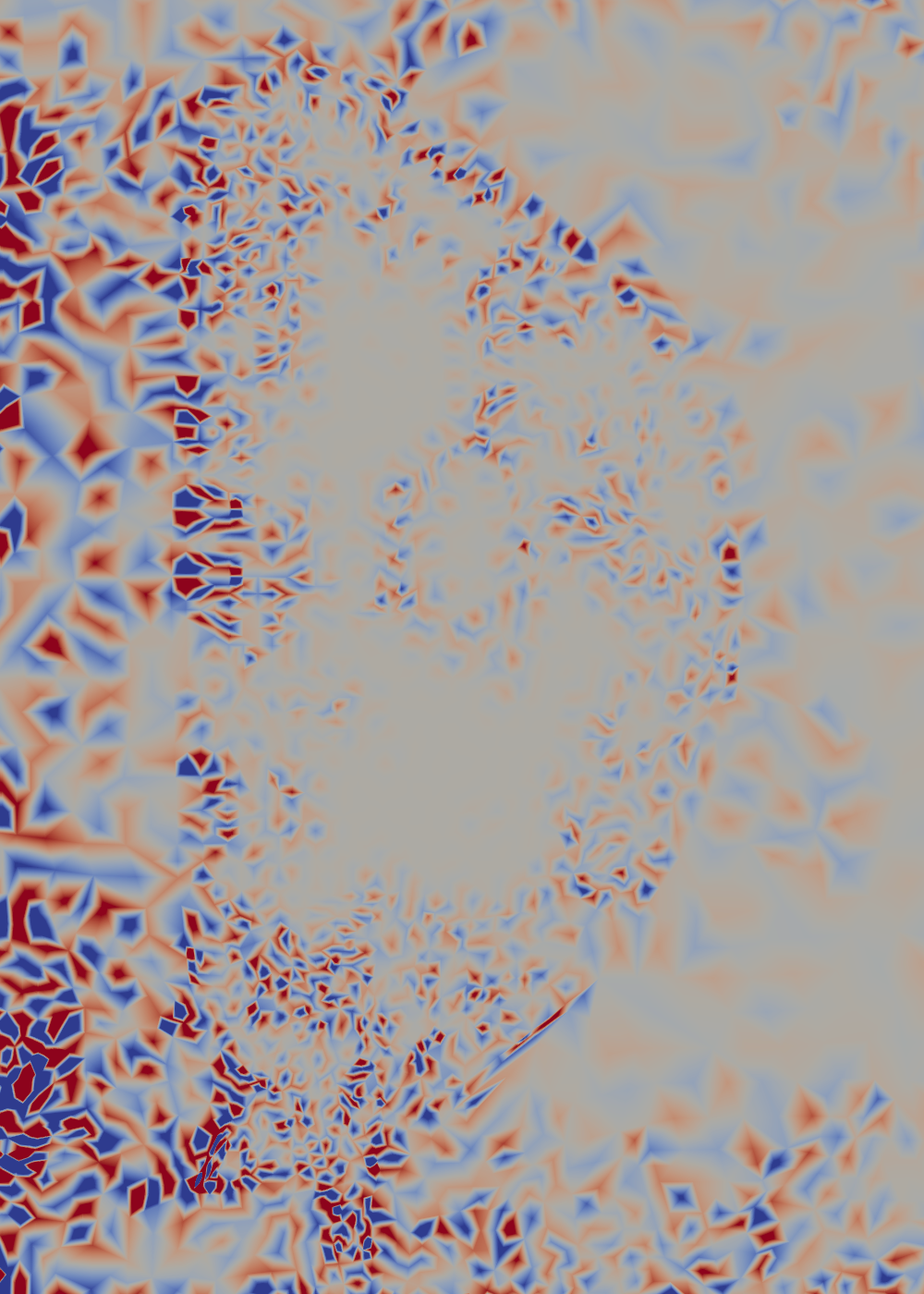}
        \caption{$CG(\mathcal{T}_{2D})_{m}$~\eqref{eq:proj_path_C}}
    \end{subfigure}%
    \begin{subfigure}[t]{0.1\textwidth}
        \centering
        \includegraphics[height=0.2\textheight]{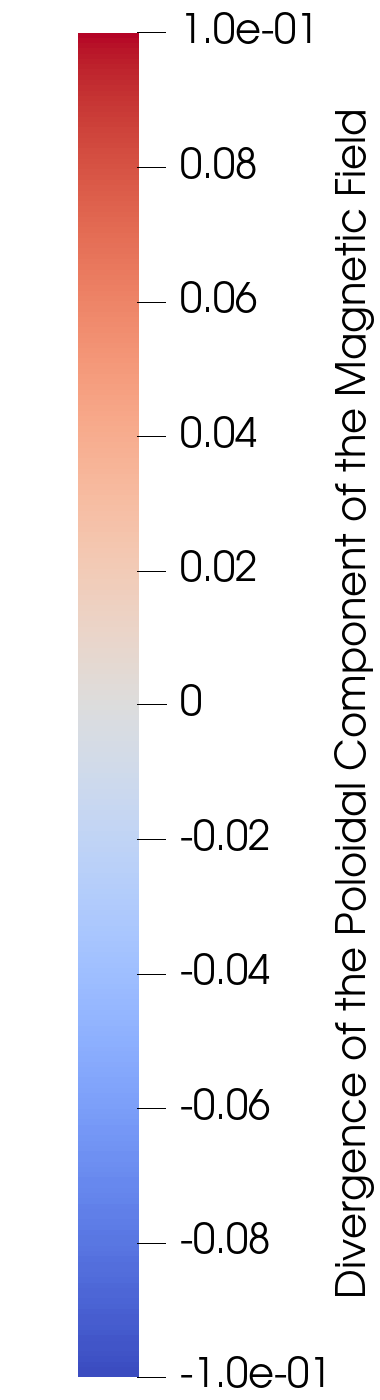}
    \end{subfigure}
    \caption{Comparison of the divergence of the poloidal component of the magnetic field ($D_b$) from different projection paths, with ($D_b$) in different spaces depending on the projection path.
        (a) $D_b \in DG(\mathcal{T}_{2D})_{m-1}$ from \eqref{eq:proj_path_A},
        (b) $D_b \in CG(\mathcal{T}_{2D})_{m}$ from \eqref{eq:proj_path_B},
        (c) $D_b \in CG(\mathcal{T}_{2D})_{m}$ from \eqref{eq:proj_path_C}.}
    \label{fig:div_B_pol_comparison}
\end{figure}

Therefore, given $\Psi$ in CG, the natural choice is to compute $\mathbf{B}_p$ in a divergence-conforming function space, which best eliminates the error induced by the implicit projection. This is verified by the force balance results above. However, divergence-conforming $\mathbf{B}_p$ does not offer the additional advantage of maintaining the divergence-free property of the magnetic field. In order to minimize the error in both aspects, it is necessary to compute $\Psi$ in DG from the GS solver so that we can obtain curl-conforming $\mathbf{B}_p$ without additional errors introduced by the implicit projection of $\Psi$ into DG.

\subsection{Impact of mesh misalignment}
\begin{figure}[htb]
    \centering
    \begin{subfigure}[t]{0.22\textwidth}
        \centering
        \includegraphics[height=0.2\textheight]{figs/GS_GS_align/JxB_pol_A.png}
        \caption{original}
    \end{subfigure}%
    \begin{subfigure}[t]{0.22\textwidth}
        \centering
        \includegraphics[height=0.2\textheight]{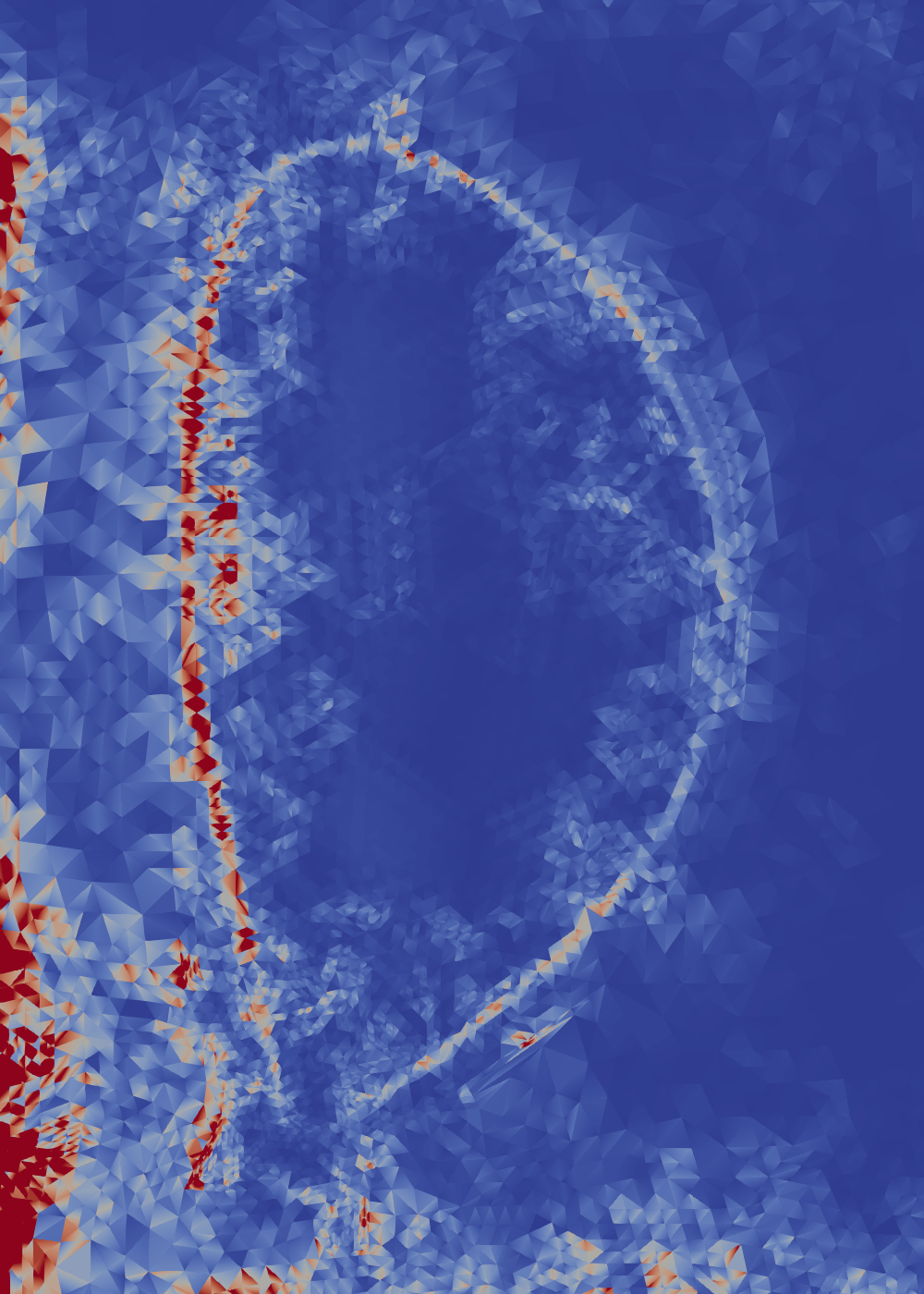}
        \caption{w/ perturbations}
    \end{subfigure}%
    \begin{subfigure}[t]{0.22\textwidth}
        \centering
        \includegraphics[height=0.2\textheight]{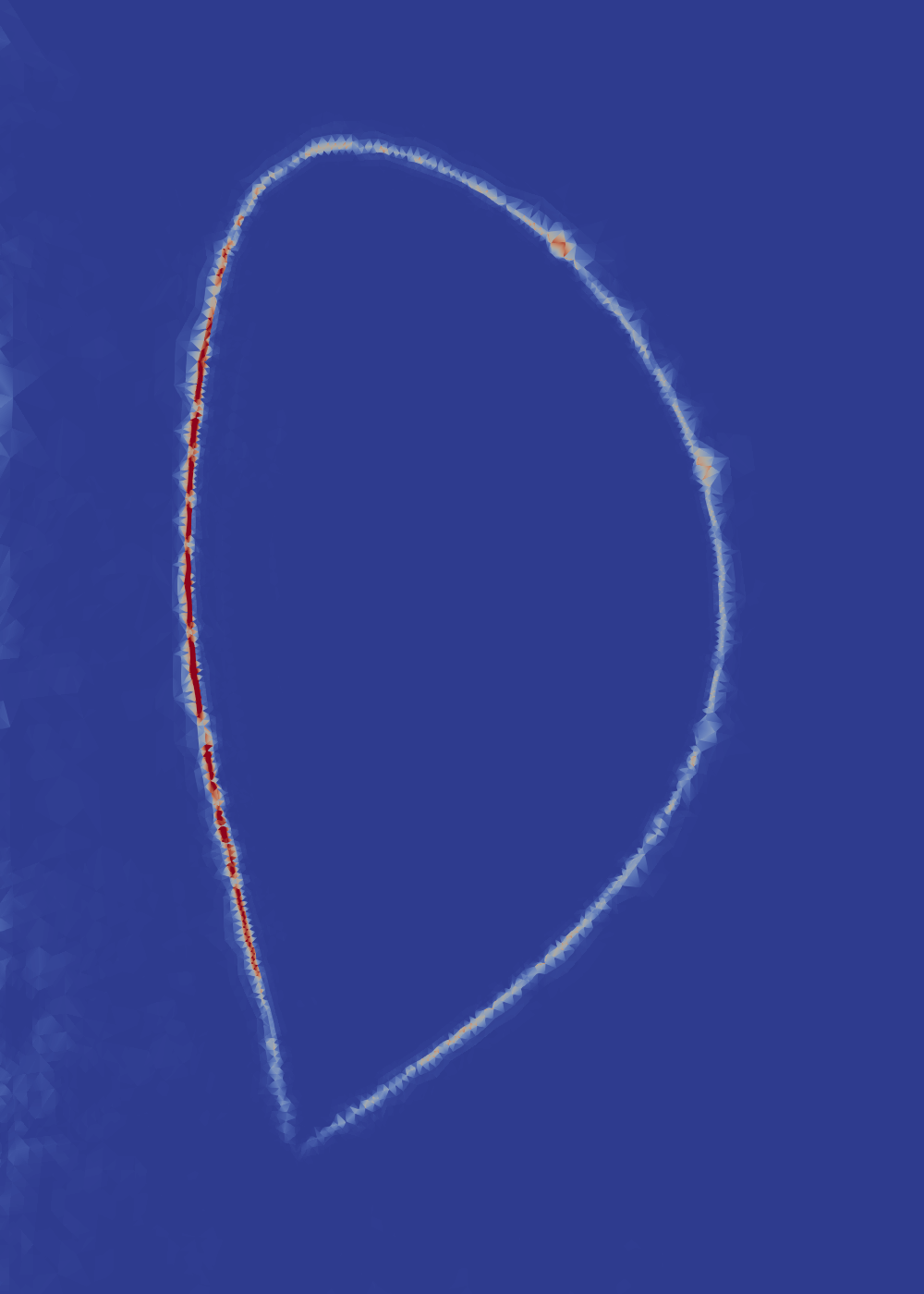}
        \caption{w/ refinement}
    \end{subfigure}%
    \begin{subfigure}[t]{0.1\textwidth}
        \centering
        \includegraphics[height=0.2\textheight]{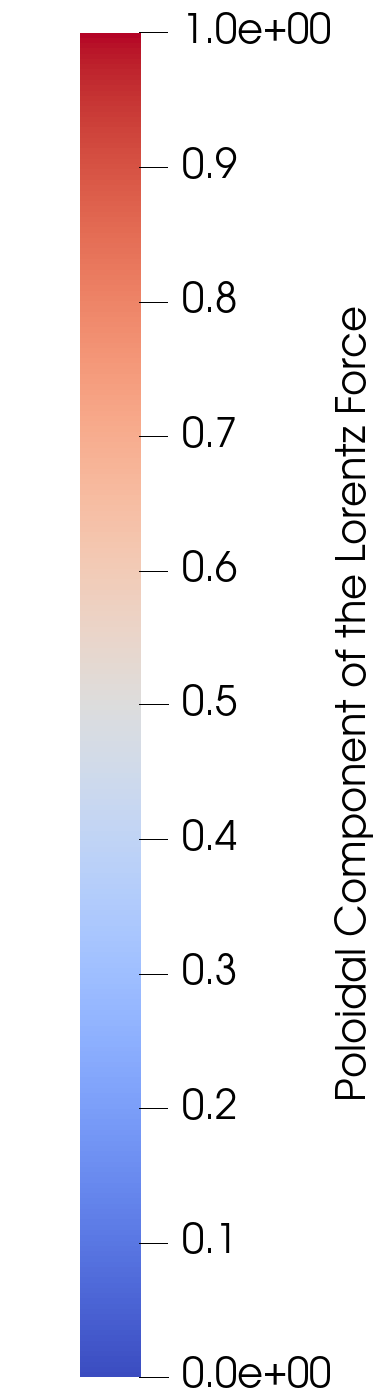}
    \end{subfigure}

    \vspace{1em}

    \begin{subfigure}[t]{0.22\textwidth}
        \centering
        \includegraphics[height=0.2\textheight]{figs/GS_GS_align/JxB_tor_A.png}
        \caption{original}
    \end{subfigure}%
    \begin{subfigure}[t]{0.22\textwidth}
        \centering
        \includegraphics[height=0.2\textheight]{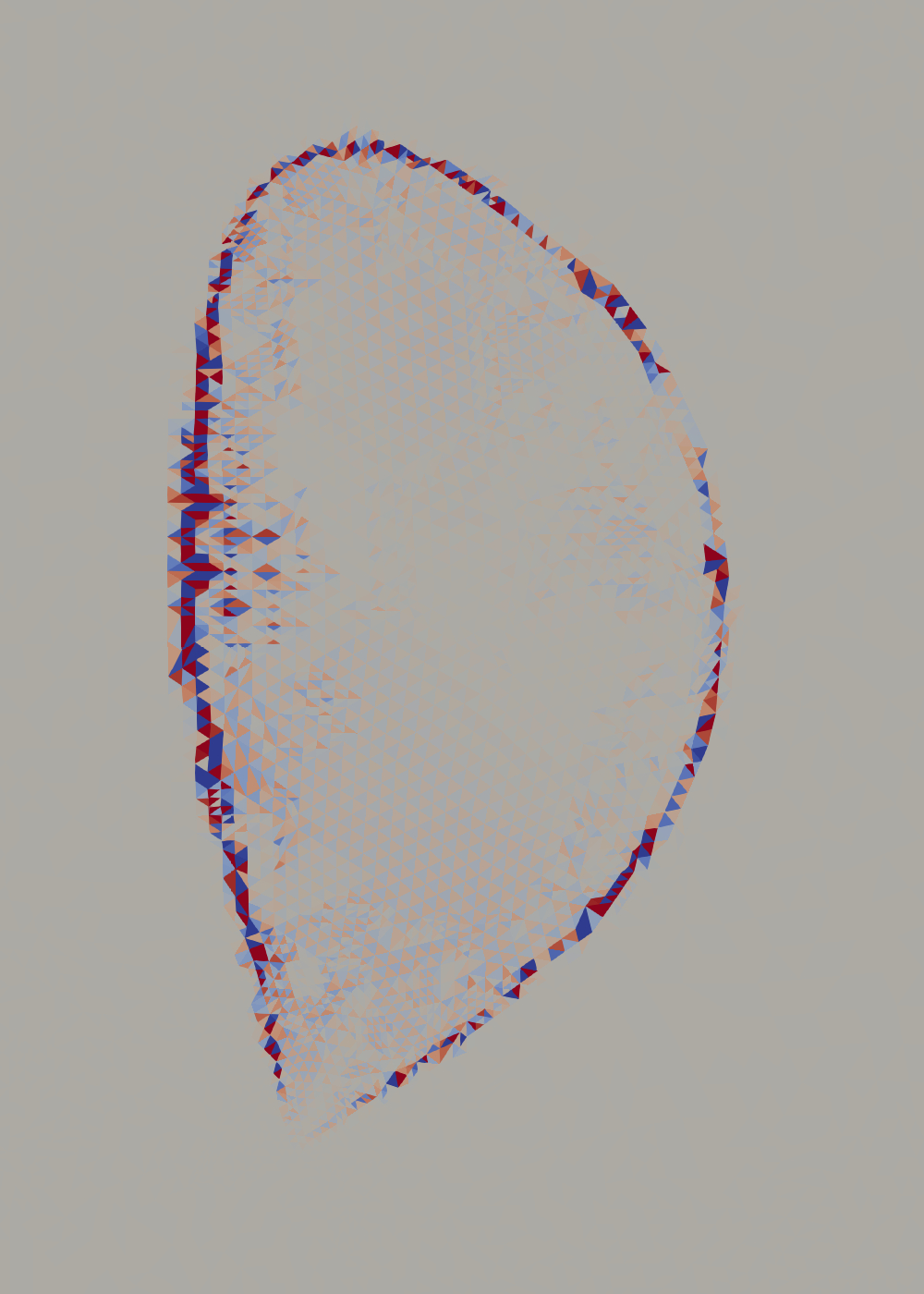}
        \caption{w/ perturbations}
    \end{subfigure}%
    \begin{subfigure}[t]{0.22\textwidth}
        \centering
        \includegraphics[height=0.2\textheight]{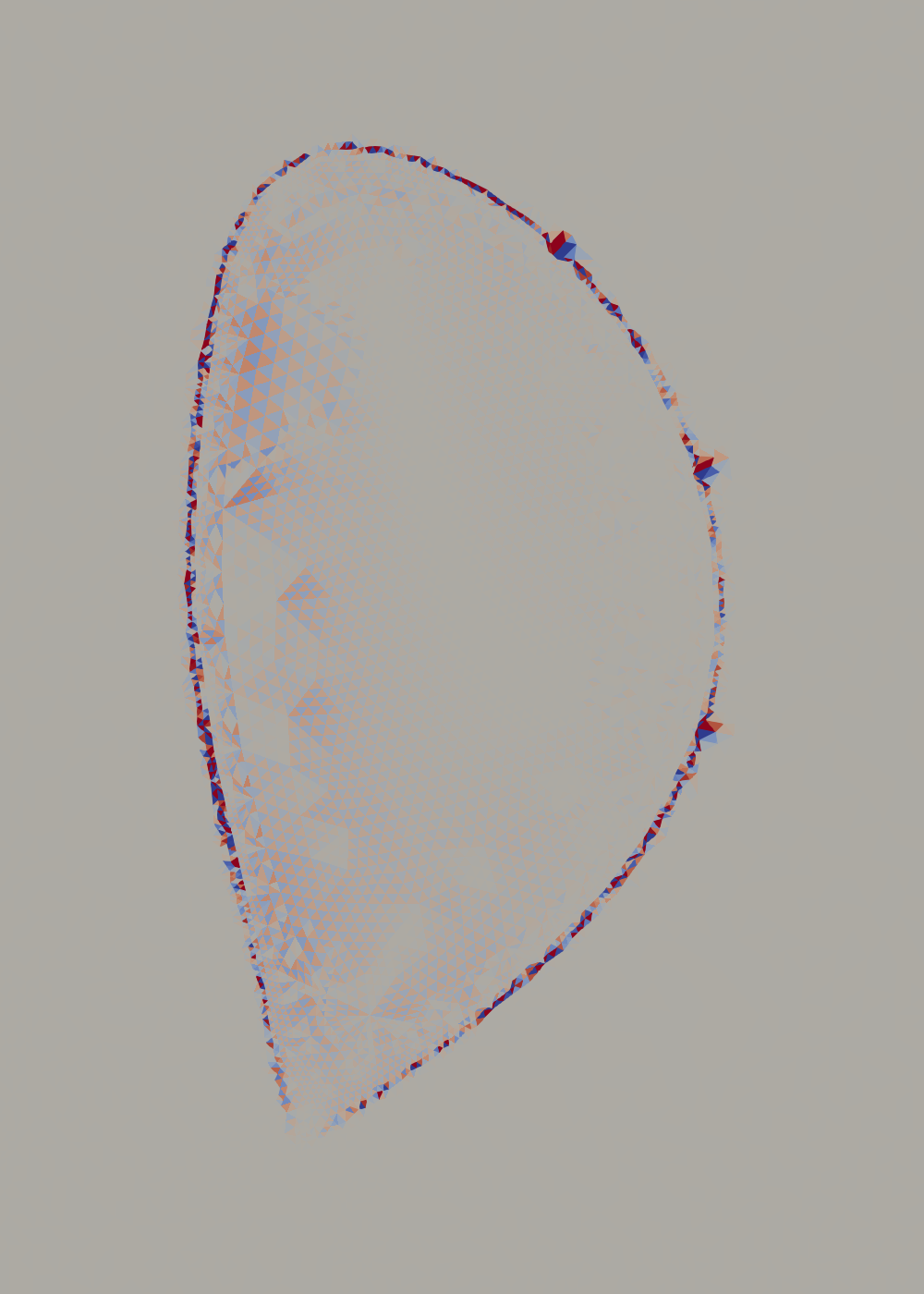}
        \caption{w/ refinement}
    \end{subfigure}%
    \begin{subfigure}[t]{0.1\textwidth}
        \centering
        \includegraphics[height=0.2\textheight]{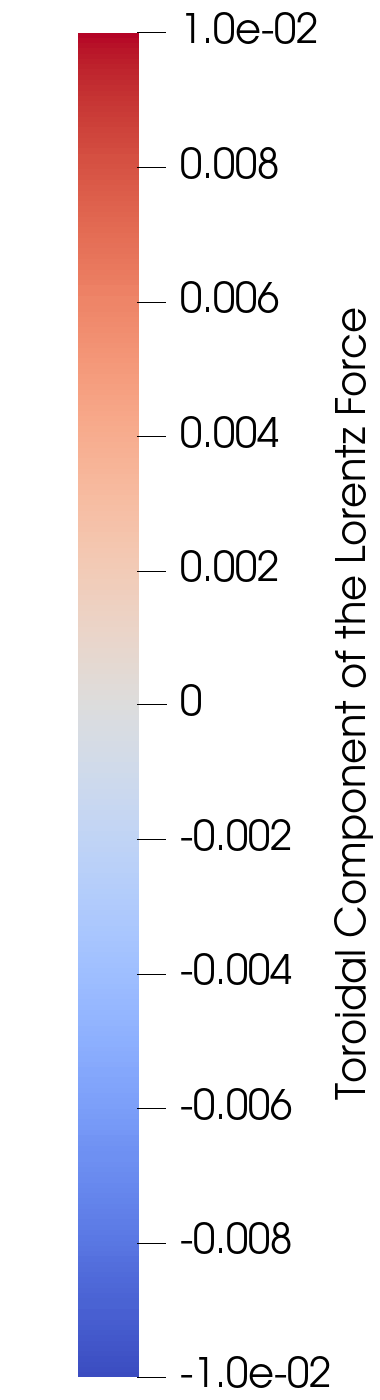}
    \end{subfigure}

    \caption{Comparison of the Lorentz force components from projection path~\eqref{eq:proj_path_A}.
        Top row: poloidal component $[\mathbf{B}\times\mathbf{J}]_p$.
        (a) original,
        (b) w/ perturbations,
        (c) w/ refinement.
        Bottom row: toroidal component $[\mathbf{B}\times\mathbf{J}]_t$.
        (e) original,
        (f) w/ perturbations,
        (g) w/ refinement.}
    \label{fig:JxB_per_refine}
\end{figure}

We further analyze the impact of mesh misalignment on the numerical results, using the GS solver output from~\cite{serino2024adaptive}. The comparison between the Lorentz force projected on the aligned mesh and the perturbed misaligned mesh is shown in Figure~\ref{fig:JxB_per_refine} (b, e). We show the results from projection path \eqref{eq:proj_path_A}, which we showed above to be the path yielding the least noise in the aligned case. As expected and in line with previous observations in the literature, the results show that the misalignment leads to significant additional noise in the entire domain. In our case, we find this to be true especially in the poloidal component of the Lorentz force. In addition to the Lorentz force, we note that results for the divergence field are expected to be analogous in the non-mesh-aligned case to those in the aligned one. This holds true in particular for the weakly divergence-free projection path \eqref{eq:proj_path_B}, which still leads to an approximately divergence-free magnetic field $\mathbf{B}$ (see~\ref{apx:div_B_pol_per_refine}). The reason behind this is that the divergence-free property relies on the identity $\nabla^\perp \cdot \nabla \equiv 0$ as well as a consistency condition of $g_1$ along the boundary (see Proposition \ref{prop_zero_div}), both of which hold true independently of the underlying choice of mesh.

\subsection{Impact of mesh refinement}

Finally, we analyze the impact of mesh refinement on the numerical results. In particular, we refine the mesh near the separatrix, which often contains steep gradients and also contains the main remaining error for the Lorentz force in our above results. This leverages one of the adaptivity strategies presented in our adaptive GS solver~\cite{serino2024adaptive}. Figure~\ref{fig:JxB_per_refine} (c, f) shows the comparison between the Lorentz force computed on meshes with and without refinement. Again, we show the results from projection path \eqref{eq:proj_path_A}. As expected, results show that refinement at the separatrix significantly reduces the error in all components of the Lorentz force.

Finally, we note that we have also conducted experiments on aligning mesh edges with the separatrix. Results indicate that such alignment does not further improve the accuracy as long as mesh refinement has been conducted along the separatrix. It is unclear why alignment with the separatrix does not lead to further improvements, and the final remaining error as seen in Figure~\ref{fig:JxB_per_refine} may be related to the projection of the CG field $f$ into the DG field $B_t$ within projection path \eqref{eq:proj_path_A}. As mentioned before, in this work, we only have access to a GS solver that returns CG fields, including $f$. In future work, we aim to also discuss corresponding GS-to-MHD field transfer results in which $f$ is a DG field, while retaining $\Psi$ in DG (or vice versa), for fully compatible projection paths with divergence- or curl-conforming magnetic fields. We note that our result does not contradict the practical popularity of field-aligned meshes used in transient MHD simulations in MCF.  In recent studies \cite{green2022efficient, vogl2023mesh}, the mesh alignment is found to be useful for anisotropic diffusion in MHD under a strong magnetic field.



\section{Conclusion}
\label{sec:conclusion}
In this work, we have studied the numerical errors involved in transferring GS equilibria to MHD solvers. We have identified several sources of such errors, including incompatibility between the finite element spaces used in the GS and MHD solvers, misalignment between the GS and MHD meshes, and possibly under-resolved strong gradients near the separatrix. We have derived several possible structure-preserving transferring strategies and have designed experiments to compare their numerical errors in terms of preserving the force balance and divergence-free properties of the magnetic field.

The incompatibility between the GS and MHD finite element spaces is examined within a compatible finite element framework. The compatible finite element framework implements a discretized de-Rham complex, where there is a natural choice of finite element space that corresponds to the differential operators involved in the projection path. To examine the incompatibility between the GS and MHD finite element spaces, we derive three candidate projection paths, among which two are compatible finite element space-based projection paths and one is based on vector-CG spaces, serving as a baseline. The numerical results show that the compatible finite element spaces perform significantly better than the vector-CG spaces in terms of MHD equilibrium errors and preserving the divergence-free property in the case of the curl-conforming magnetic field discretization. In particular, we find that given a GS solver output as a CG field, one of the proposed paths performs the best in terms of preserving the force balance, while the other path performs the best in terms of preserving the divergence-free property of the magnetic field, whereas the vector-CG spaces preserve neither property. 

We have also examined the impact of mesh misalignment between the GS and MHD meshes. We experimented with this by applying a small perturbation to the MHD mesh. The numerical results show that a small perturbation to the MHD mesh is enough to result in large numerical errors in the force balance. This indicates that it is important to align the MHD mesh exactly with the GS mesh in order to minimize the force imbalance in the transferred equilibria.
Finally, we have also examined the impact of the possibly under-resolved strong gradients near the separatrix on the numerical error. We find that mesh refinement along the separatrix is effective in reducing the numerical error in the force balance.

Overall, our results highlight that compatible finite element spaces offer significant advantages in minimizing the numerical errors involved in transferring GS equilibria to MHD solvers compared to e.g., vector-CG spaces. The choice of compatible finite element setup in turn requires suitable spaces returned by the GS solver, with $\Psi$ in CG for divergence-conforming magnetic fields, and $\Psi$ in DG for curl-conforming magnetic fields, yet divergence-conforming magnetic fields lose the divergence-free property. In future work, we aim to further test our projection path with curl-conforming magnetic fields that are naturally divergence-free, using an updated GS solver that can also provide $\Psi$ as a DG field.




\appendix
\section{Derivation of component-wise field reconstructions in the 2D poloidal plane}
\label{apx:diff_ops_2D}
Recall that we have defined $\nabla = \left(\pp{}{r}, \pp{}{z}\right)^T$ and $(A_r, A_z)^\perp = (-A_z, A_r)$, and $\nabla_c \times (\cdot)$ denotes the curl in cylindrical coordinates. Thus, for an axisymmetric toroidal vector field $A_t \mathbf{e}_\phi = A_\phi \mathbf{e}_\phi$, we have
\begin{align}
  \begin{split}
    \nabla_c \times (A_t \mathbf{e}_\phi) & = -\pp{A_\phi}{z} \mathbf{e}_r + \frac{1}{r} \pp{(r A_\phi)}{r} \mathbf{e}_z                   = \frac{1}{r} \left(-\pp{(r A_\phi)}{z} \mathbf{e}_r + \pp{(r A_\phi)}{r} \mathbf{e}_z\right) \sim_\phi \; \frac{1}{r} \nabla^\perp (r A_t),
  \end{split}
\end{align}
where $A_t$ is the 2D embedding of $A_\phi$ into the $(r,z)$-plane. Further, for an axisymmetric poloidal vector field $\mathbf{A} = A_r \mathbf{e}_r + A_z \mathbf{e}_z$, we have
\begin{align}
  \begin{split}
    \nabla_c \times \mathbf{A} & =  \frac{1}{r} \frac{\partial A_z}{\partial \phi} \mathbf{e}_r + \left( \frac{\partial A_r}{\partial z} - \frac{\partial A_z}{\partial r} \right) \mathbf{e}_\phi - \frac{\partial A_r}{\partial \phi} \mathbf{e}_z  = -\left(\pp{A_z}{r} - \pp{A_r}{z}\right) \mathbf{e}_\phi \sim_\phi \; -\left( \nabla^\perp \cdot \mathbf{A}_p\right),
  \end{split}
\end{align}
where $\mathbf{A}_p$ is the component-wise 2D embedding of $\mathbf{A}$ into the $(r, z)$-plane, noting that $\mathbf{A}$ has no out-of-plane component. Note that we use the axisymmetry assumption here, such that $\pp{A_z}{\phi} = 0$ and $\pp{A_r}{\phi} = 0$.

For the Lorentz force components $[\mathbf{B} \times \mathbf{J}]_p$ and $[\mathbf{B} \times \mathbf{J}]_t$, we have
\begin{align}
    \mathbf{B} \times \mathbf{J} & = (B_r \mathbf{e}_r + B_\phi \mathbf{e}_\phi + B_z \mathbf{e}_z) \times (J_r \mathbf{e}_r + J_\phi \mathbf{e}_\phi + J_z \mathbf{e}_z)  \nonumber \\
                                 & = (B_\phi J_z - B_z J_\phi) \mathbf{e}_r + (B_z J_r - B_r J_z) \mathbf{e}_\phi + (B_r J_\phi - B_\phi J_r) \mathbf{e}_z               \nonumber   \\
                                 & = -B_\phi (-J_z \mathbf{e}_r + J_r \mathbf{e}_z) + J_\phi (-B_z \mathbf{e}_r + B_r \mathbf{e}_z) + (- B_r J_z + B_z J_r) \mathbf{e}_\phi, 
\end{align}
where the first two terms are the poloidal component and the third term is the toroidal component, that is
\begin{align}
   -B_\phi (-J_z \mathbf{e}_r + J_r \mathbf{e}_z) + J_\phi (-B_z \mathbf{e}_r + B_r \mathbf{e}_z) &\; \sim_\phi \; (-B_t \mathbf{J}_p^\perp  + J_t \mathbf{B}_p^\perp), \\
   (- B_r J_z + B_z J_r) \mathbf{e}_\phi &\; \sim_\phi \;( \mathbf{B}_p \cdot \mathbf{J}_p^\perp).
\end{align}
Thus, we have
\begin{align}
  [\mathbf{B} \times \mathbf{J}]_p & = -B_t \mathbf{J}_p^\perp + J_t \mathbf{B}_p^\perp,\\
  \hspace{1cm} [\mathbf{B} \times \mathbf{J}]_t & = \mathbf{B}_p \cdot \mathbf{J}_p^\perp.
\end{align}

Finally, we can derive $[\mathbf{B} \times \mathbf{J}]_p$ and $[\mathbf{B} \times \mathbf{J}]_t$ in terms of $B_t$ and $\mathbf{B}_p$ only, as
\begin{align}
  \begin{split}
    [\mathbf{B} \times \mathbf{J}]_p & = -B_t \mathbf{J}_p^\perp + J_t \mathbf{B}_p^\perp  = -B_t \left(\frac{1}{r}\nabla^\perp (rB_t)\right)^\perp -(\nabla^\perp \cdot \mathbf{B}_p) \mathbf{B}_p^\perp  = \frac{1}{r} B_t \nabla (rB_t) - (\nabla^\perp\cdot \mathbf{B}_p) \mathbf{B}_p^\perp,
  \end{split}
\end{align}
and
\begin{align}
  \begin{split}
    [\mathbf{B} \times \mathbf{J}]_t & = \mathbf{B}_p \cdot \mathbf{J}_p^\perp = \mathbf{B}_p \cdot \left(\frac{1}{r}\nabla^\perp (rB_t)\right)^\perp  = -\frac{1}{r} \left(\mathbf{B}_p \cdot \nabla(rB_t)\right).
  \end{split}
\end{align}

\section{Integration by parts for weak forms}
\label{apx:IBP}
Integration by parts was used to derive \eqref{discr_B_p_Hcurl} from \eqref{Bp_from_psi}. To see this, take the inner product of \eqref{Bp_from_psi} with a test function (before discretization, assuming suitably regular fields), and obtain
\begin{align}
    \left \langle \mathbf{\Sigma}, r\mathbf{B}_p \right \rangle = \left \langle \mathbf{\Sigma},  \nabla^{\perp} \Psi \right \rangle, \qquad \forall \mathbf{\Sigma} \in H(\text{curl}, \mathcal{T}_{2D})_m,
    \label{int_B_p}
\end{align}
where we include an additional factor of $r$ due to the Jacobian determinant when transforming the integration from cylindrical coordinates to the poloidal plane. We can then use the definition of the perpendicular operator (including its property $\mathbf{a}^\perp \cdot \mathbf{b} = - \mathbf{a} \cdot \mathbf{b}^\perp$ for any vectors $\mathbf{a}$, $\mathbf{b}$) together with integration by parts, which leads to
\begin{equation}
\left \langle \mathbf{\Sigma},  \nabla^{\perp} \Psi \right \rangle = - \left \langle \mathbf{\Sigma}^\perp,  \nabla \Psi \right \rangle = \left \langle \nabla \cdot \mathbf{\Sigma}^\perp,  \Psi \right \rangle - \int_{\partial \mathcal{T}_{2D}} \Psi(\mathbf{\Sigma}^\perp \cdot \mathbf{n}) \, dS = -\left \langle \nabla^\perp \cdot \mathbf{\Sigma}, \Psi \right \rangle + \int_{\partial \mathcal{T}_{2D}} (\mathbf{\Sigma} \cdot \mathbf{n}^\perp) \, dS.
\end{equation}
The weak computation of current density components can be derived in a similar manner.

\section{Comparison of different projection paths for magnetic field}
\label{apx:B_field_proj_paths}
Figure~\ref{fig:B_pol_comparison} shows the poloidal component of the magnetic field from the two projection paths. Since there are two components, $B_r$ and $B_z$ in $\mathbf{B}_p$, we show here the magnitude of the field. Overall there are no significant differences in the $\mathbf{B}_p$ itself from the two projection paths, except for some minor differences that reveal the properties of different finite element spaces.
\begin{figure}[!htbp]
  \centering
  \begin{subfigure}[t]{0.22\textwidth}
    \centering
    \includegraphics[height=0.2\textheight]{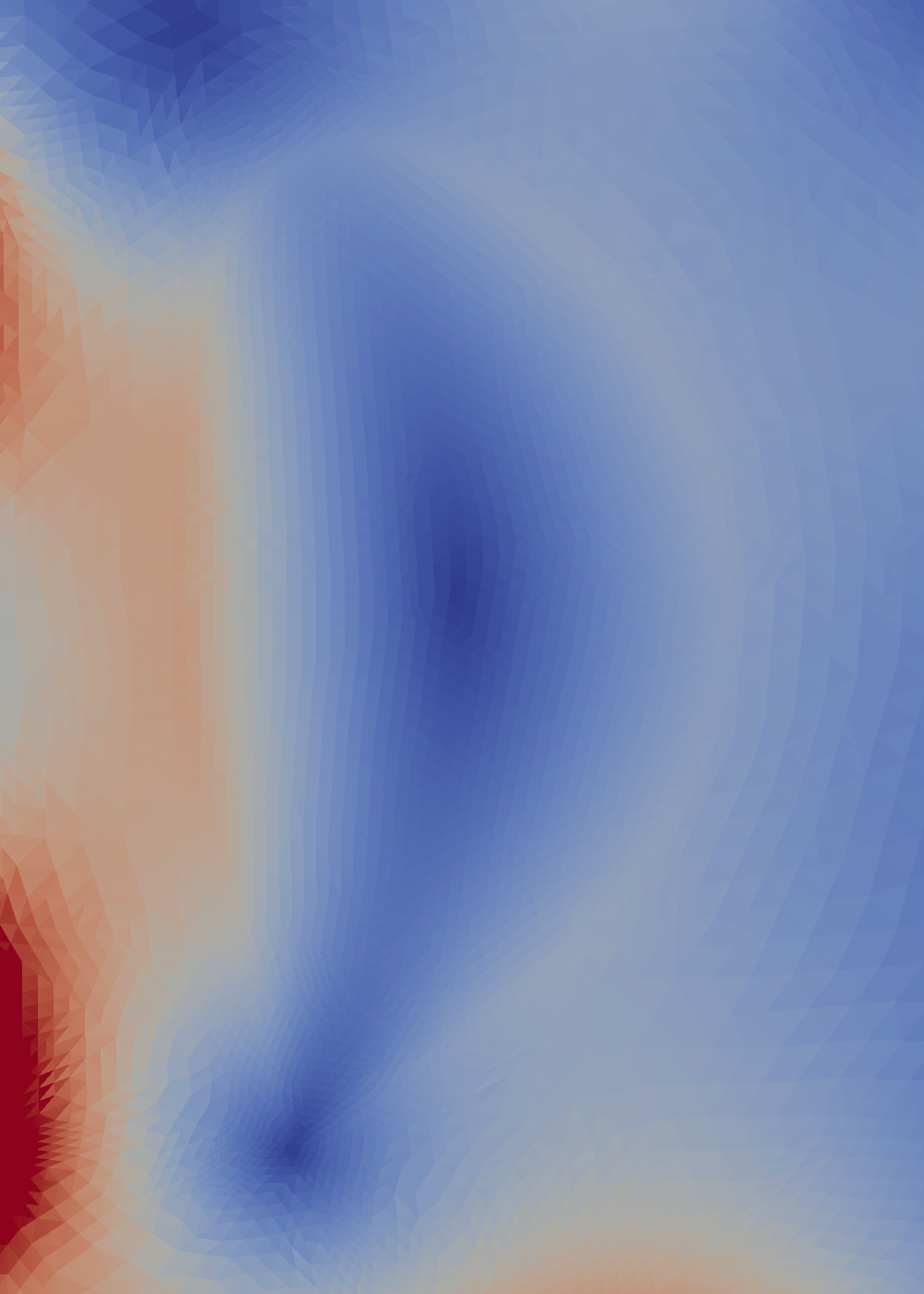}
    \caption{$H(\text{div}, \mathcal{T}_{2D})_m$~\eqref{eq:proj_path_A}}
  \end{subfigure}%
  \begin{subfigure}[t]{0.22\textwidth}
    \centering
    \includegraphics[height=0.2\textheight]{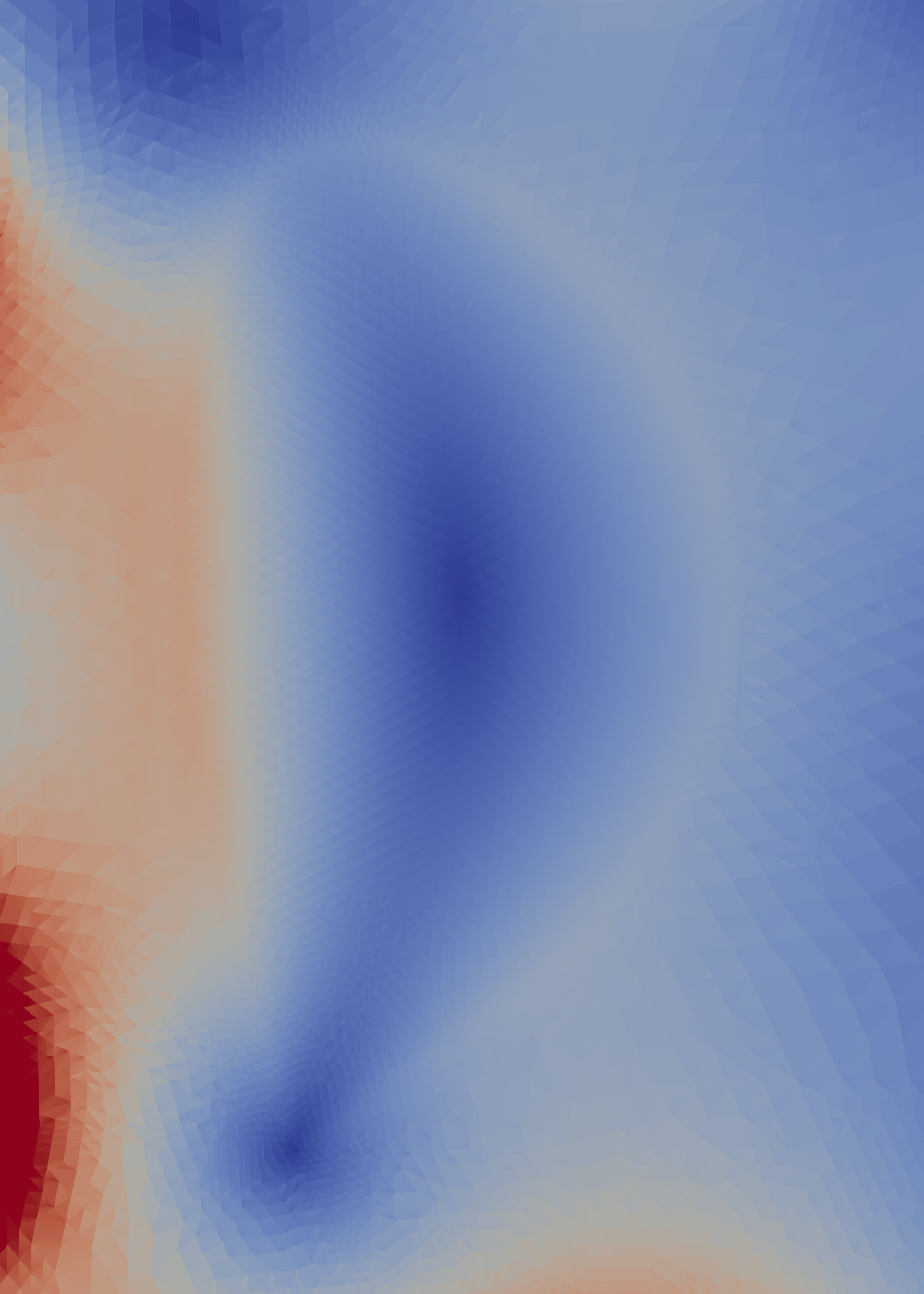}
    \caption{$H(\text{curl}, \mathcal{T}_{2D})_m$~\eqref{eq:proj_path_B}}
  \end{subfigure}%
  \begin{subfigure}[t]{0.22\textwidth}
    \centering
    \includegraphics[height=0.2\textheight]{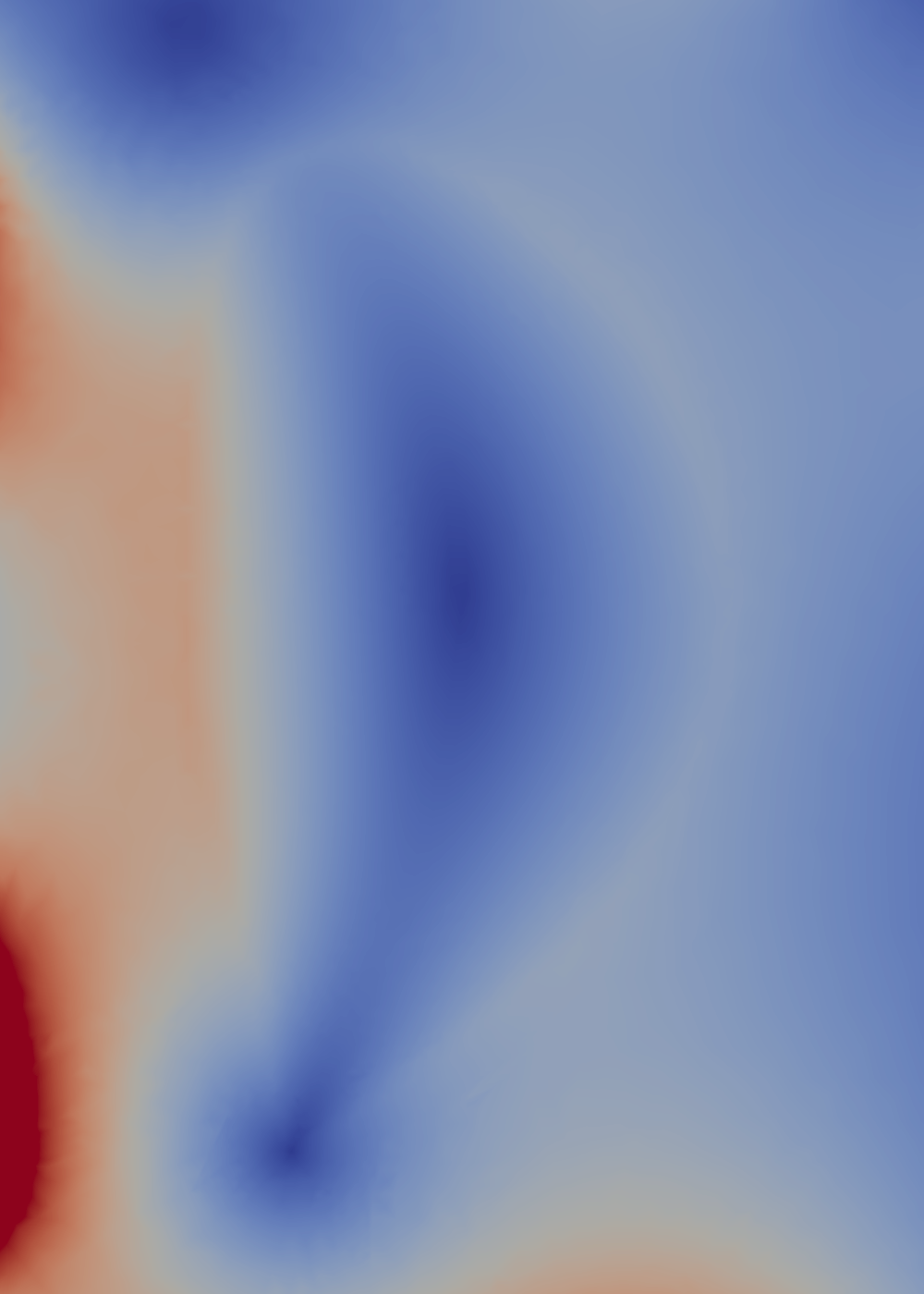}
    \caption{$CG(\mathcal{T}_{2D})_m^2$~\eqref{eq:proj_path_C}}
  \end{subfigure}%
  \begin{subfigure}[t]{0.1\textwidth}
    \centering
    \includegraphics[height=0.2\textheight]{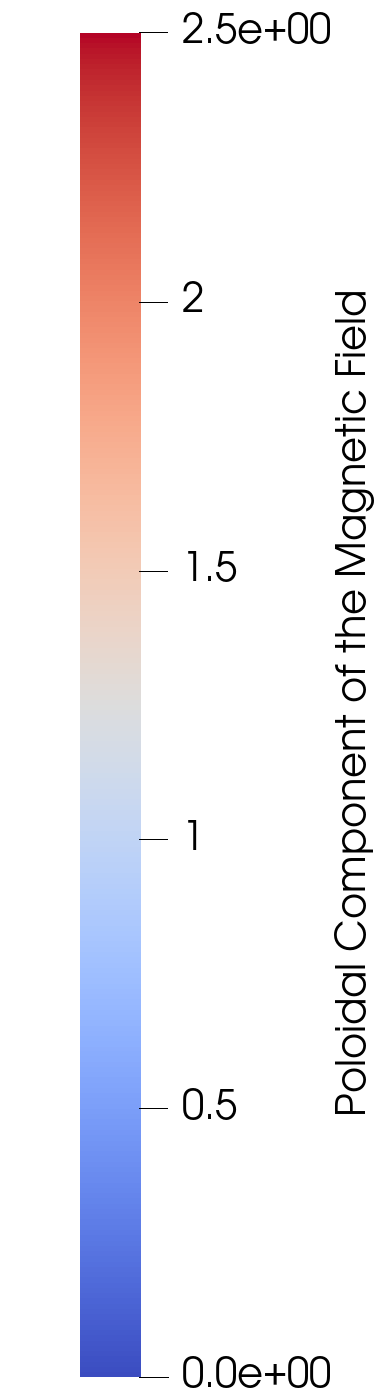}
  \end{subfigure}

  \caption{Comparison of the poloidal component of the magnetic field $\mathbf{B}_p$ from different projection paths.
    (a) $H(\text{div}, \mathcal{T}_{2D})_m$~\eqref{eq:proj_path_A},
    (b) $H(\text{curl}, \mathcal{T}_{2D})_m$~\eqref{eq:proj_path_B},
    (c) $CG(\mathcal{T}_{2D})_m^2$~\eqref{eq:proj_path_C}.  }
  \label{fig:B_pol_comparison}
\end{figure}

Figure~\ref{fig:B_tor_comparison} shows the toroidal component of the magnetic field from the two projection paths. Similarly to the poloidal component, the results do not show significant differences except for the enforced smoothness of the toroidal component in the CG space.
\begin{figure}[!htbp]
  \centering
  \begin{subfigure}[t]{0.22\textwidth}
    \centering
    \includegraphics[height=0.2\textheight]{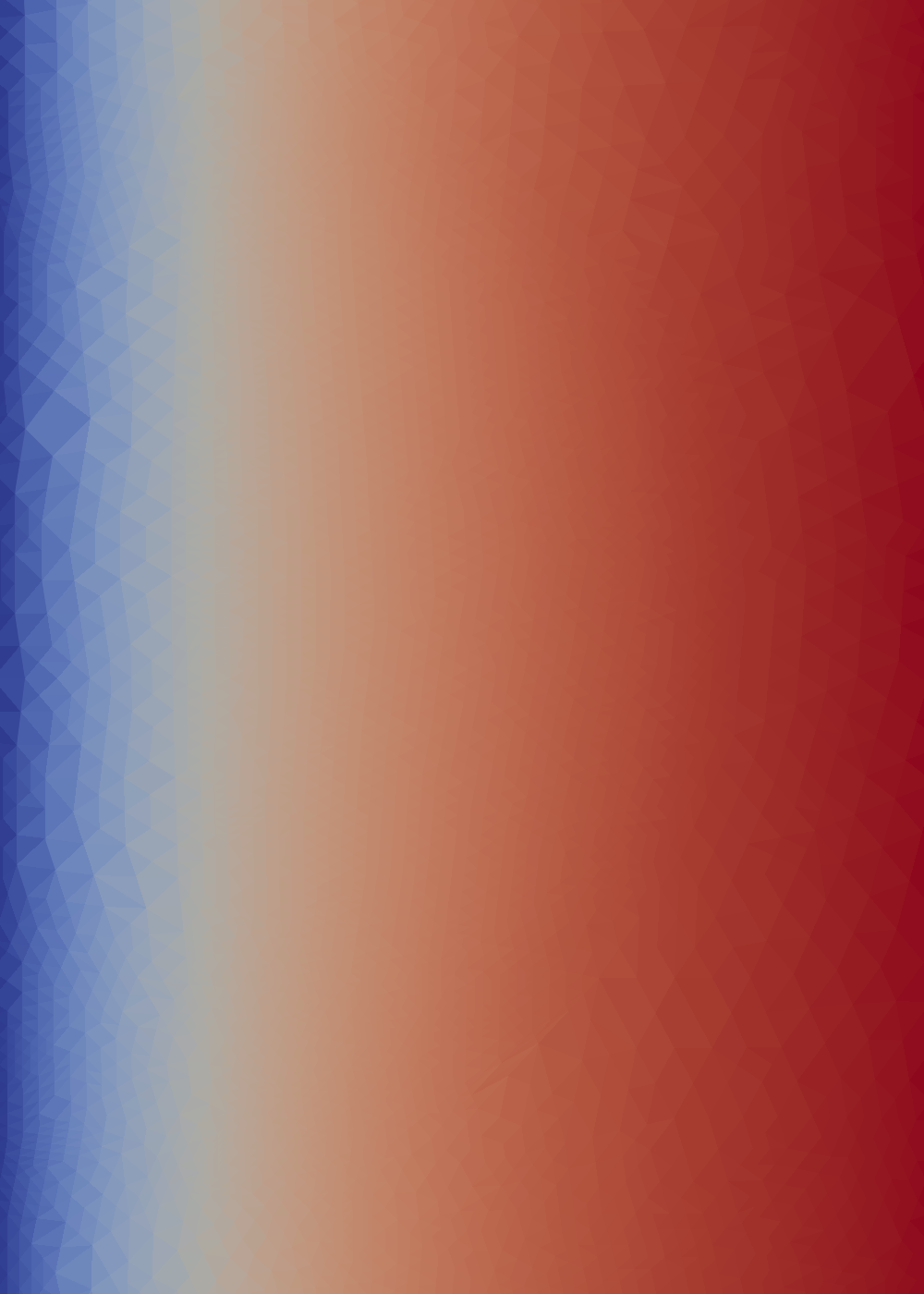}
    \caption{$DG(\mathcal{T}_{2D})_{m-1}$~\eqref{eq:proj_path_A}}
  \end{subfigure}%
  \begin{subfigure}[t]{0.22\textwidth}
    \centering
    \includegraphics[height=0.2\textheight]{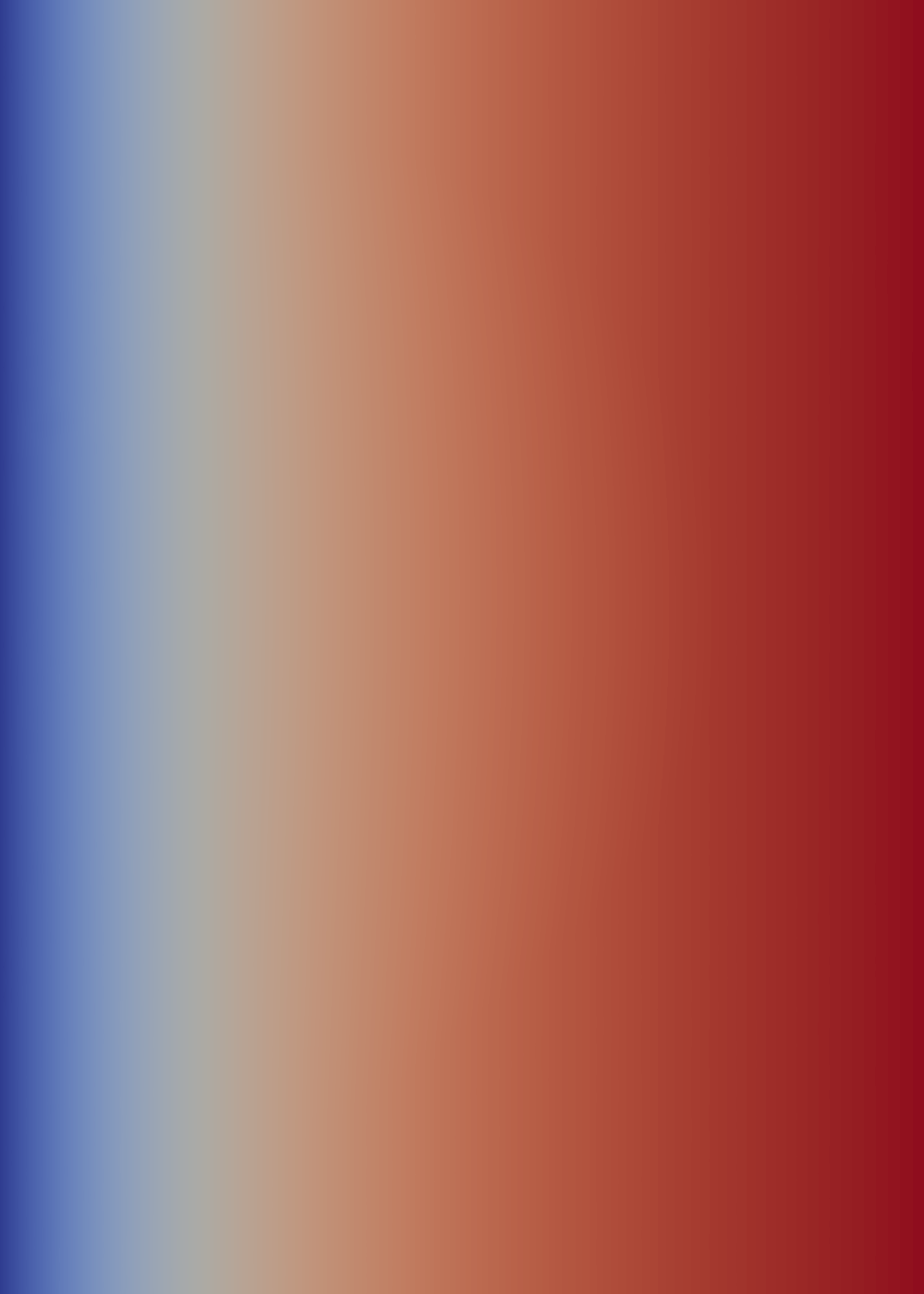}
    \caption{$CG(\mathcal{T}_{2D})_{m}$~(\ref{eq:proj_path_B}, \ref{eq:proj_path_C})}
  \end{subfigure}%
  \begin{subfigure}[t]{0.1\textwidth}
    \centering
    \includegraphics[height=0.2\textheight]{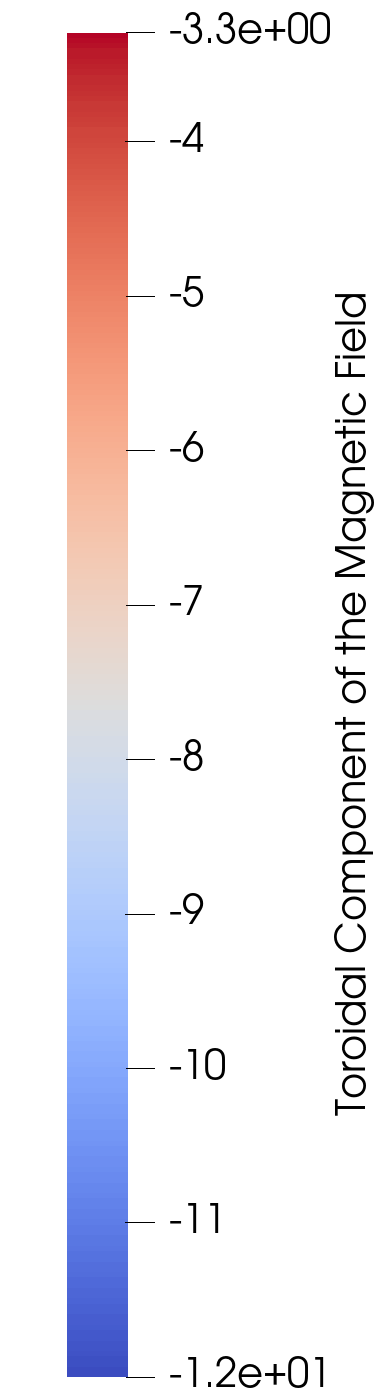}
  \end{subfigure}

  \caption{Comparison of the toroidal component of the magnetic field $B_t$ from different projection paths.
    (a) $DG(\mathcal{T}_{2D})_{m-1}$~\eqref{eq:proj_path_A},
    (b) $CG(\mathcal{T}_{2D})_{m}$~(\ref{eq:proj_path_B}, \ref{eq:proj_path_C}).}
  \label{fig:B_tor_comparison}
\end{figure}

\section{Comparison of divergence of the poloidal component of the magnetic field under perturbations and refinement}
\label{apx:div_B_pol_per_refine}

Figure~\ref{fig:div_B_pol_per_refine} shows the divergence of the poloidal component of the magnetic field ($D_b$) from projection path~\eqref{eq:proj_path_A} under mesh perturbations and refinement. Results show that neither perturbations nor refinement impact the divergence-free property of the poloidal magnetic field.
\begin{figure}[!htbp]
  \centering
  \begin{subfigure}[t]{0.22\textwidth}
    \centering
    \includegraphics[height=0.2\textheight]{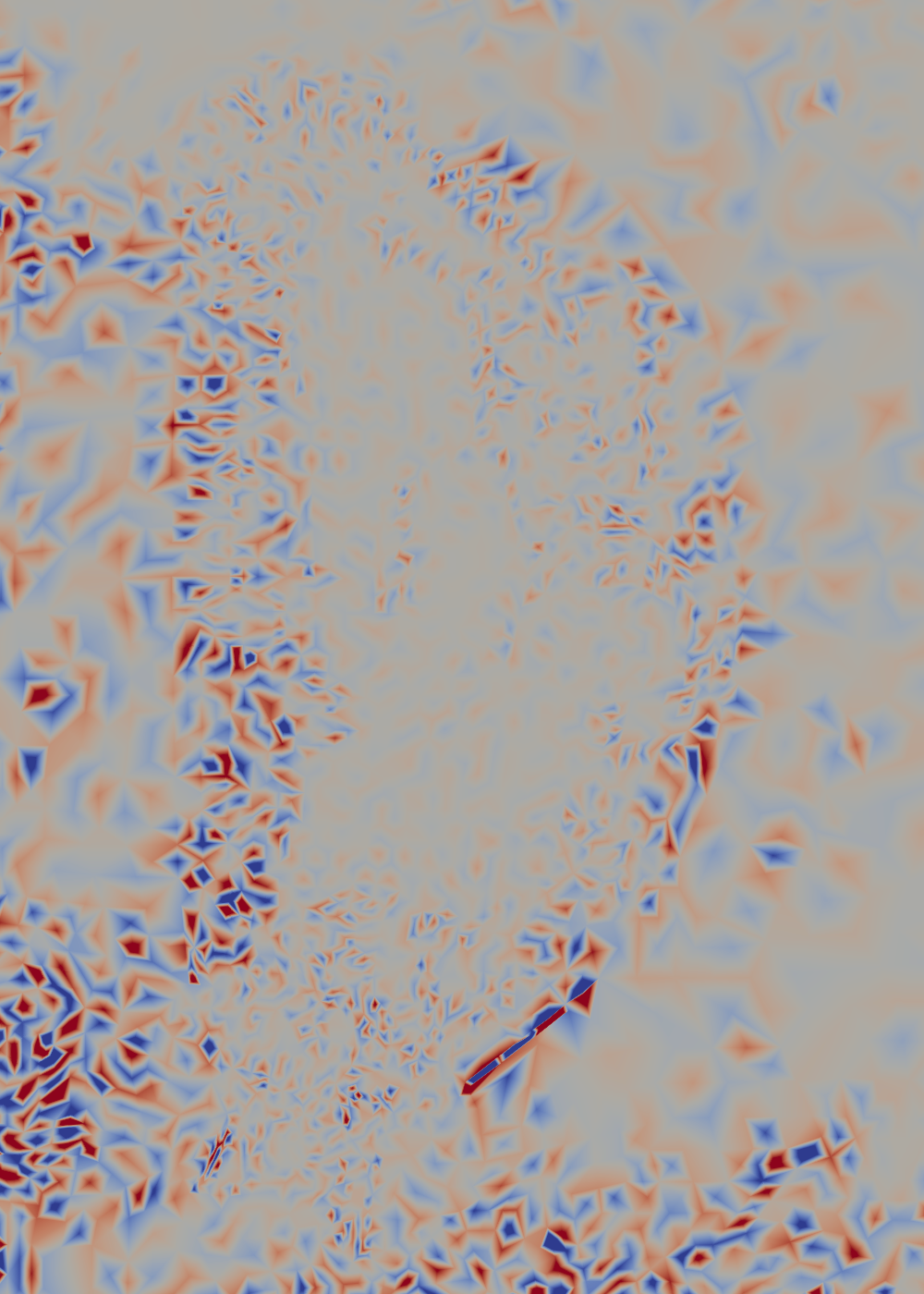}
    \caption{original}
  \end{subfigure}%
  \begin{subfigure}[t]{0.22\textwidth}
    \centering
    \includegraphics[height=0.2\textheight]{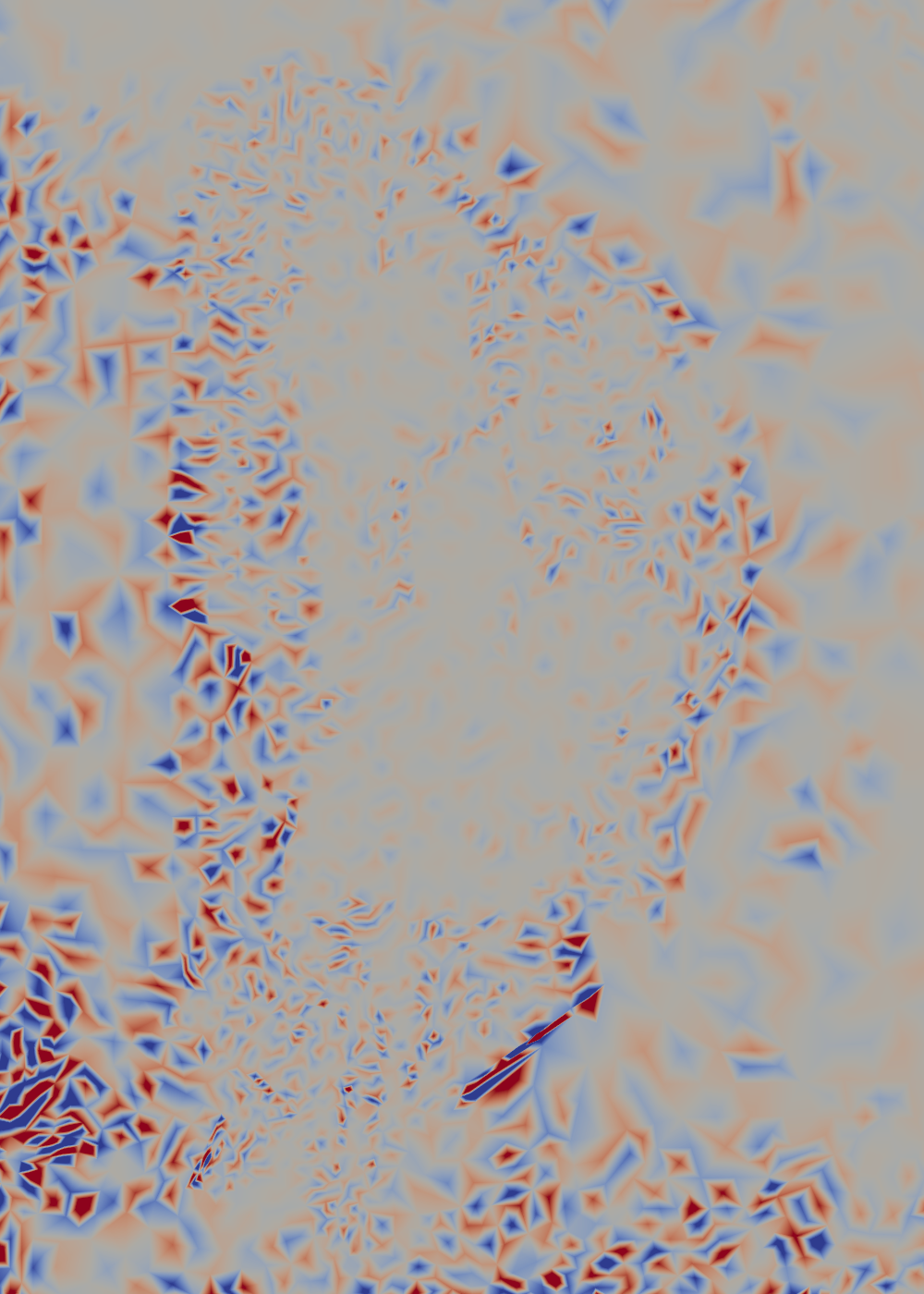}
    \caption{w/ perturbations}
  \end{subfigure}%
  \begin{subfigure}[t]{0.22\textwidth}
    \centering
    \includegraphics[height=0.2\textheight]{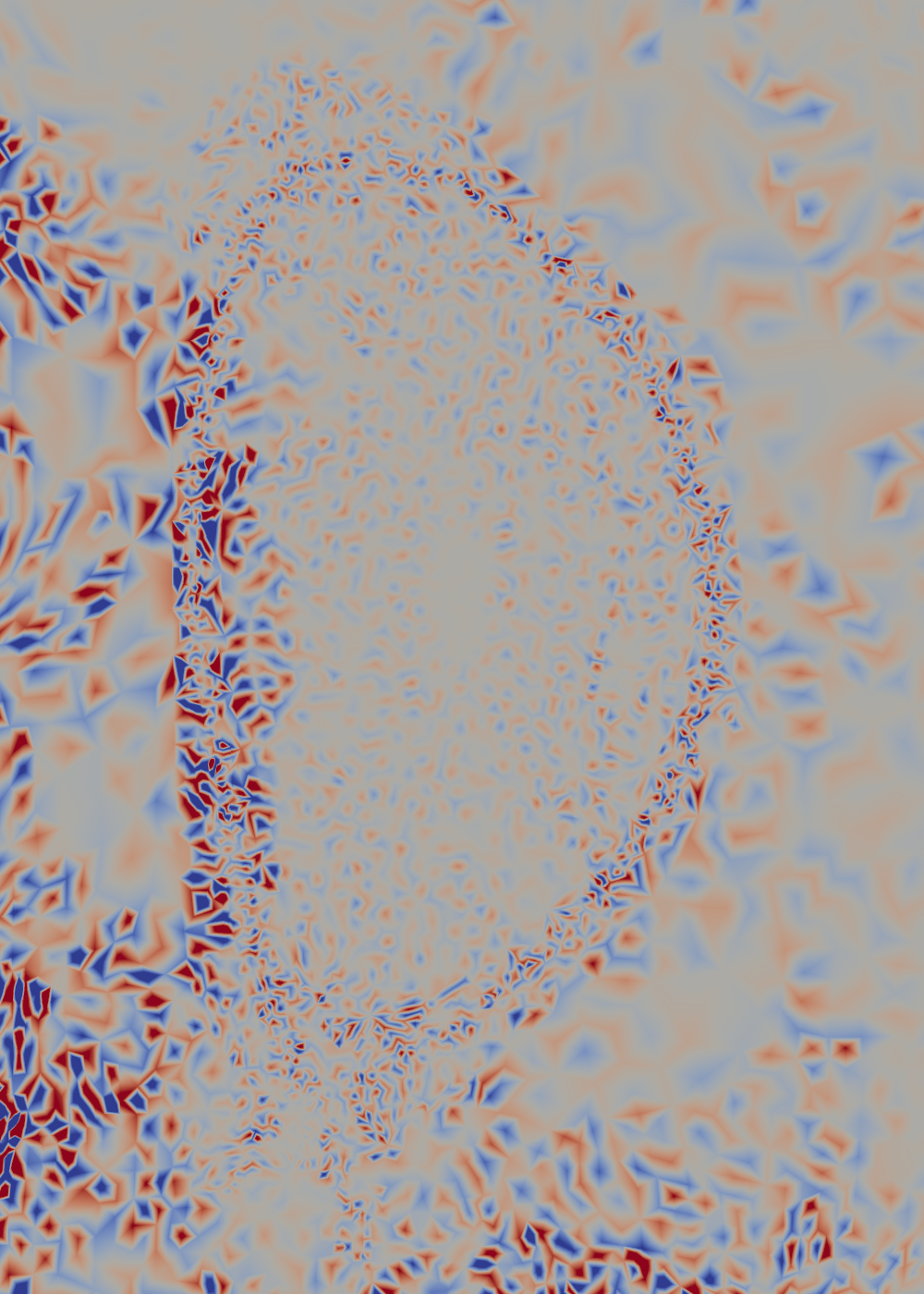}
    \caption{w/ refinement}
  \end{subfigure}%
  \begin{subfigure}[t]{0.1\textwidth}
    \centering
    \includegraphics[height=0.2\textheight]{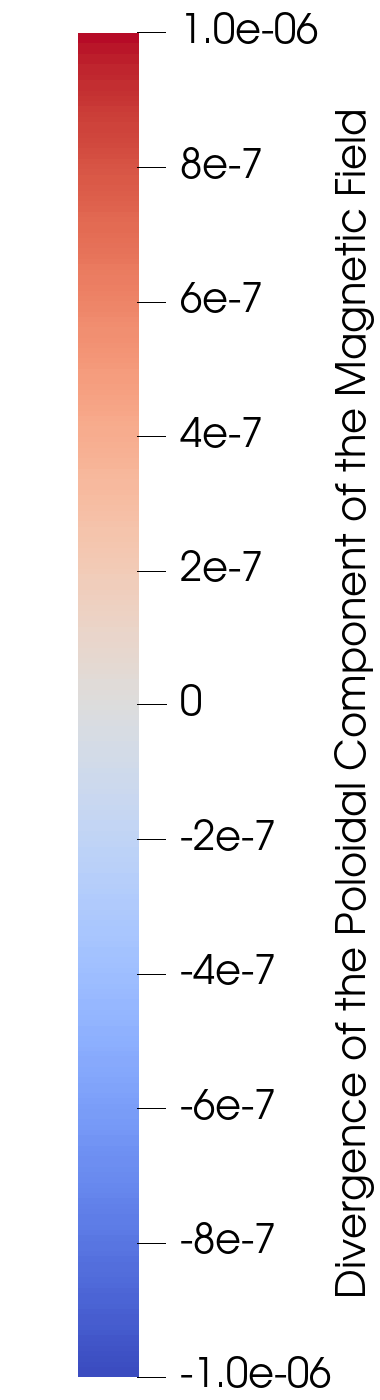}
  \end{subfigure}
  \caption{Comparison of the divergence of the poloidal component of the magnetic field ($D_b$) from projection path~\eqref{eq:proj_path_A}.
    (a) original,
    (b) w/ perturbations,
    (c) w/ refinement.}
  \label{fig:div_B_pol_per_refine}
\end{figure}


\clearpage
\bibliography{references.bib}

\end{document}